\numberwithin{equation}{section}
\theoremstyle{plain}
\newtheorem{thm}{Theorem}[section]
\newtheorem{lem}[thm]{Lemma}
\newtheorem{prop}[thm]{Proposition}
\theoremstyle{definition}
\theoremstyle{remark}
\newtheorem{rem}[thm]{Remark}
\newcommand{\ep}{\epsilon}
\newcommand{\gsl}{\fsl{g}}
\newcommand{\eqn}[2]{ \begin{equation*} #2  \end{equation*} }
\begin{document}

\title[The Fuchsian approach to global existence for hyperbolic equations]{The Fuchsian approach to global existence for hyperbolic equations}

\author[F. Beyer]{Florian Beyer}
\address{Dept of Mathematics and Statistics\\
730 Cumberland St\\
University of Otago, Dunedin 9016\\ New Zealand}
\email{fbeyer@maths.otago.ac.nz }

\author[T.A. Oliynyk]{Todd A. Oliynyk}
\address{School of Mathematical Sciences\\
9 Rainforest Walk\\
Monash University, VIC 3800\\ Australia}
\email{todd.oliynyk@monash.edu}

\author[J.A. Olvera-Santamar\'{i}a]{J. Arturo Olvera-Santamar\'{i}a}
\address{School of Mathematical Sciences\\
9 Rainforest Walk\\
Monash University, VIC 3800\\ Australia}
\email{arturo.olvera@monash.edu }

\begin{abstract}
\noindent
We analyze the Cauchy problem for symmetric hyperbolic equations with a time singularity of Fuchsian type and establish a global existence theory along with decay estimates for evolutions towards the singular time under a small initial data assumption.
We then apply this theory to semilinear wave equations near spatial infinity on Minkowski and Schwarzschild spacetimes, and to the relativistic Euler equations
with Gowdy symmetry on Kasner spacetimes. 
\end{abstract}

\maketitle

\section{Introduction\label{intro}}
Systems of first order hyperbolic equations that can be expressed in the form
\begin{equation}
B^0(t,u)\del{t}u + B^i(t,u)\nabla_{i} u  = \frac{1}{t}\Bc(t,u)\Pbb u + F(t,u) \label{symivp}
\end{equation}
are said to be \textit{Fuchsian} if the right-hand side is formally singular at $t=0$.  Traditionally, these systems have been viewed as \emph{singular initial value problems} (SIVP). Here, \textit{asymptotic data} is prescribed at the singular time $t=0$ and then \eqref{symivp} is used to evolve the asymptotic data \emph{away from the singular time} to construct solutions on time intervals $t\in (0,T]$ for some possibly small $T>0$. Some earlier applications of this theory were restricted to solutions in a real analytic class \cite{andersson2001,choquet-bruhat2006,choquet-bruhat2004,ChruscielKlinger:2015,damour2002,heinzle2012,isenberg1999,isenberg2002,kichenassamy1998,Klinger:2015,Rendall:2004}; see also \cite{OliynykKunzle:2002a,OliynykKunzle:2002b} for applications 
in the ODE setting. More recently the theory of SIVPs for Fuchsian equations has been extended to classes of solutions with Sobolev regularity in \cite{ames2013a,ames2013b,claudel1998a,kichenassamy2007k} using more standard local-in-time PDE techniques for hyperbolic equations. The asymptotic data is typically allowed to be large, but the time of existence may be small. In general relativity and for Einstein's equations, such techniques were applied in \cite{ames2017,beyer2010b,beyer2011,beyer2017,Fournodavlos:2016,rendall2000,rendall2004a,stahl2002}. Fuchsian SIVPs also arise in many other areas; for a selection, see \cite{kichenassamy2007k}. 

While the SIVP approach for establishing the existence of solutions to \eqref{symivp} is useful for certain applications, there are many situations where
the standard initial value problem (IVP) for the Fuchsian system \eqref{symivp} is the relevant problem. In this case, initial data is prescribed at some finite time, say $t=1$, and the problem becomes
to establish the existence of solutions to \eqref{symivp} all the way up to the singular time at $t=0$, that is, for $t\in (0,1]$. The flavour of this problem is that of a global existence problem
and a prime example comes from \cite{Oliynyk:CMP_2016} where it was shown that the Einstein-Euler equations on cosmological spacetimes with a positive cosmological constant
can be cast into Fuchsian form. It was further established in \cite{Oliynyk:CMP_2016} that the IVP for a certain class of symmetric hyperbolic Fuchsian systems, where the initial data was specified at $t=1$, is solvable for $t$ on the whole interval $(0,1]$ provided the initial data is chosen suitably small. Together, these two results were used to deduce the
future non-linear stability of perturbations of 
Friedmann-Lema\^{i}tre-Robertson-Walker (FLRW)
solutions to the Einstein-Euler equations with a positive cosmological constant and a linear equation of state; see also 
\cite{HadzicSpeck:2015,LubbeKroon:2013,RodnianskiSpeck:2013,Speck:2012} for related results using different methods.

Since then, the small initial data existence
result from \cite{Oliynyk:CMP_2016} for systems of the form \eqref{symivp} has been employed in a variety of situations to establish the global existence of solutions to the future for
a number of different hyperbolic systems on expanding cosmological spacetimes that are either exactly or small perturbation of FLRW; see the articles \cite{LeFlochWei:2015,LiuOliynyk:2018b,LiuOliynyk:2018a,LiuWei:2019,Wei:2018} for details.
These results all rely heavily on the fact that the spacetime is either exactly or nearly FLRW
in order to transform the evolution equations into the Fuchsian form \eqref{symivp}, which then allows for the global existence theory from \cite{Oliynyk:CMP_2016} to be applied. 
This state of affairs leads one to question if the Fuchsian view point is of any use for establishing
the global existence of solutions to hyperbolic equations on spacetimes such as Minkowski or Schwarzschild spacetimes, or for cosmological spacetimes that are not necessarily close to FLRW such as Kasner spacetimes. 

The main aim of this article is to show that the Fuchsian approach to global existence applies to these new settings as well. However, we
cannot  apply the global existence theory from
 \cite{Oliynyk:CMP_2016} directly since the Fuchsian systems that arise in these new settings contain singular terms that are not covered by this theory. Roughly speaking, the global existence results from \cite{Oliynyk:CMP_2016} apply to Fuchsian systems of the form
 \eqref{symivp}  for which the coefficients $B^0$, $B^i$, $\Bc$ and $F$ are all regular in $t$ as $t\searrow 0$, while the Fuchsian systems that arise from the applications considered in this article have coefficients $B^i$ and $F$ that are singular in $t$ and can be expanded as follows
 \begin{align}
 B^i(t,u) &=  B_0^i(t,u)+\frac{1}{t^{\frac{1}{2}}} B_1^i(t,u) + \frac{1}{t}B_2^i(t,u) \label{newsing.1}\\
 F(t,u) &= F_0(t,u)+\frac{1}{t^{\frac{1}{2}}} F_1(t,u) + \frac{1}{t}F_2(t,u) \label{newsing.2}
 \end{align}
where now the coefficients $B_a^i$ and $F_a^i$, $a=0,1,2$, are all regular in $t$ as $t\searrow 0$ and all the coefficients appearing 
in the equation must satisfy the additional technical
conditions described in Section \ref{coeffassump} (see also Section \ref{sec:timetrafos} for a time rescaled version of these conditions). 
Of particular note is that these additional assumptions imply that the coefficients $B^0$, $B_a^i$ and $F_a^i$, $a=1,2$, must satisfy
\begin{gather*}
  \Pbb^\perp B^0(t,u)\Pbb = \Ord(t^\frac{1}{2}+\Pbb u),\quad
    \Pbb B^0(t,u)\Pbb^\perp = \Ord(t^\frac{1}{2}+\Pbb u),\\
 \Pbb^\perp B_1^i(t,u)\Pbb^\perp = \Ord(\Pbb u),\quad
  \Pbb^\perp B_2^i(t,u)\Pbb^\perp = \Ord(\Pbb u\otimes \Pbb u),\\
   \Pbb^\perp B_2^i(t,u)\Pbb = \Ord(\Pbb u),\quad
    \Pbb B_2^i(t,u)\Pbb^\perp = \Ord(\Pbb u),\\
\Pbb F_1(t,u) = \Ord(u), \quad \Pbb^\perp F_1(t,u) =  \Ord(\Pbb u),\\
\Pbb F_2(t,u) = 0 \AND \Pbb^\perp F_2(t,u) = \Ord(\Pbb u\otimes \Pbb u),
 \end{gather*}
where $\Pbb$ is the projection operator that appears in \eqref{symivp} and $\Pbb^\perp=\id-\Pbb$ is the complementary projection operator.
These conditions are fundamental to our proof and are required not only to establish the existence of solutions on time intervals of the form $(0,T_0]$, but also to obtain uniform decay estimates as $t\searrow 0$. It is worthwhile mentioning here that by relaxing some of these assumptions while strengthening others,
it is still possible to obtain the existence of solutions on the time interval $(0,T_0]$ that do not decay as $t\searrow 0$; see \cite{FOW:2020} for an example of this situation.

A large part of this article will be devoted to extending the global existence result from \cite{Oliynyk:CMP_2016} so that it is applicable to Fuchsian systems with these new types of singular terms. The precise statement of our global existence result for systems of the form \eqref{symivp} is given in Theorem \ref{symthm}. Three applications of this theorem, which involve Fuchsian systems with singular terms of
the form \eqref{newsing.1}-\eqref{newsing.2}, are considered in Section \ref{applications}. There, 
the first two applications involve systems of semilinear wave equations near spatial infinity on Minkowski and Schwarzschild spacetimes. The third concerns perfect fluids on Kasner spacetimes with Gowdy symmetry. See also \cite{Oliynyk:2020} for a recent application of the theory developed in this article. It is worth emphasizing here that it is precisely by generalizing the existence theory from \cite{Oliynyk:CMP_2016} to allow
for singular terms of the form \eqref{newsing.1}-\eqref{newsing.2} that allows for the Fuchsian approach to global existence to become applicable to a wide range of hyperbolic equations that includes wave equations on stationary spacetimes and the relativistic Euler equations
on spacetimes that are not close to FLRW spacetimes with accelerated expansion.

From a general perspective, Theorem \ref{symthm} can be viewed as yielding a classification of possible singular behaviours for solutions of Fuchsian systems with a broad class of singular terms.
It is essential to our applications that the singular terms allowed by Theorem \ref{symthm} go beyond what was allowed  previously in treatments of Fuchsian systems, particularly in regards to allowing for singular coefficients to appear in the spatial derivatives. This possibility to accommodate such
singular terms has been investigated in other recent analyses \cite{ames2019,ringstrom2017} of related, but less singular, \textit{linear} systems. These works have established, among other things, an asymmetry between estimates for evolutions \emph{towards the singular time} and those \emph{away from the singular time}. This asymmetry has the consequence that useful estimates of the first type, which our paper here fully relies on, can often be obtained under more singular conditions than estimates of the second type. It is precisely those latter, more restricted estimates that are necessary to study the singular initial value problem. An additional problem for formulating a singular initial value problem is that one needs to "guess" a leading-order term for the solutions at the singular time, i.e., asymptotic data. The standard approach in the literature is to construct this leading-order term from the ODE-part of the equations, and this only makes sense if the spatial derivative terms are not too singular.

\section{Preliminaries\label{prelim}}

\subsection{Spatial manifolds, coordinates, indexing and partial derivatives}
Throughout this article, unless stated otherwise, $\Sigma$ will denote a closed $n$-dimensional manifold, lower case Latin indices, e.g. $i,j,k$, will run from $1$ to $n$ and will index coordinate indices associated to a local coordinate system $x=(x^i)$ on
$\Sigma$, and $t$ will denote a time coordinate on intervals of the form $[T_0,T_1)$. Partial derivatives with respect to the coordinates will be denoted by
\begin{equation*}
\del{t} = \frac{\partial \;}{\partial t} \AND \del{i} = \frac{\partial \;}{\partial x^i}. 
\end{equation*}

\subsection{Vector Bundles\label{Vbundle}}
In the following, we will let $\pi \: :\: V \longrightarrow \Sigma$
denote a rank $N$ vector bundle with fibres $V_x = \pi^{-1}(\{x\})$, $x\in \Sigma$, and use $\Gamma(V)$ to denote the smooth sections of $V$. 
We will assume that $V$ is equipped
with a time-independent connection\footnote{$[\del{t},\nabla]=0$.} $\nabla$, and a time-independent, compatible\footnote{$\del{t}h=0$ and $\nabla_X (h(u,v)) = h(\nabla_X u,v)
+h(u,\nabla_X v)$ for all  $X\in \mathfrak{X}(\Sigma)$ and $u,v\in \Gamma(V)$.}, positive definite metric $h \in 
\Gamma(T^0_2(V))$. 
We will also denote the vector bundle of linear operators that act on the fibers of $V$ by $L(V)=\cup_{x\in \Sigma} L(V_x) \cong V\otimes V^*$. The \textit{transpose} of $A_x \in L(V_x)$, denoted by $A_x^{\tr}$, is then defined as the unique element of $L(V_x)$ satisfying
\begin{equation*}
h(x)(A_x^{\tr}u_x,v_x)= h(x)(u_x,A_x v_x), \quad \forall \: u_x,v_x\in V_x. 
\end{equation*}
Furthermore, given two vector bundles $V$ and $W$ over $\Sigma$, we will
use $L(V,W) = \cup_{x\in \Sigma} L(V_x,W_x) \cong W \otimes V^*$
to denote the vector bundle of linear maps from the fibres
of $V$ to the fibres of $W$.
For any given vector bundle over $\Sigma$, e.g. $V$, $L(V)$, $V\otimes V$, etc., we will generally use $\pi$ to denote the canonical projection onto $\Sigma$.

Here and below, unless stated otherwise, we will use upper case Latin indices, i.e. $I,J,K$, that run from $1$ to $N$ to index vector bundle associated to a local basis $\{e_I\}$ of $V$. By introducing a local basis $\{e_I\}$, we can represent $u\in \Gamma(V)$ and the inner-product
$h$ locally as
\begin{equation*}
u = u^I e_I \AND
h = h_{IJ}\theta^I\otimes\theta^J,
\end{equation*}
respectively,
where $\{\theta^I\}$ is local basis of $V^*$ determined from $\{e_I\}$ by duality.
Moreover, assuming that the local coordinates $(x^i)$ and the local basis $\{e_I\}$ are defined on the same open region of $\Sigma$,
we can represent the covariant derivative  $\nabla u \in \Gamma(V\otimes T^*\Sigma)$ locally by
\begin{equation*}
\nabla u = \nabla_i u^I e_I \otimes dx^i,
\end{equation*}
where 
\begin{equation*}
\nabla_i u^I = \del{i} u^I + \omega_{iJ}^I u^J,
\end{equation*}
and the $\omega_{iJ}^I$ are the connection coefficients determined, as usual, by
\begin{equation} \label{connect}
\nabla_{\del{i}} e_J = \omega_{iJ}^I e_I.
\end{equation}

We further assume that $\Sigma$ is equipped with a time-independent\footnote{$\del{t}g=0$.}, Riemannian metric $g\in \Gamma(T^0_2(\Sigma))$ that is given locally in coordinates $(x^i)$ by
\begin{equation*}
g = g_{ij} dx^i\otimes dx^j.
\end{equation*}
Since the metric determines the Levi-Civita
connection  on the tensor bundle $T^r_s(\Sigma)$ uniquely, we can, without confusion, use $\nabla$ to also denote the Levi-Civita connection. The connection on $V$ and
the Levi-Civita connection on $T^r_s(\Sigma)$ then determine a connection on the
tensor product $V\otimes T^r_s(\Sigma)$ in a unique fashion, which we will again denote by 
$\nabla$. This connection is compatible with the positive definite inner-product induced on $V\otimes T^r_s(\Sigma)$ by the inner-product $h$ on $V$
and the Riemannian metric $g$ on $\Sigma$. 
With this setup, the \textit{covariant derivative of order $s$} of a section $u \in \Gamma(V)$, denoted $\nabla^s u$, defines an
element of $\Gamma(V\otimes T^0_s(\Sigma))$ that is given locally by
\begin{equation*}
\nabla^s u = \nabla_{i_s} \cdots \nabla_{i_2} \nabla_{i_1} u^I e_I \otimes dx^{i_1}\otimes dx^{i_2}\otimes \cdots \otimes dx^{i_s}.
\end{equation*}
When $s=2$,  the components of $\nabla^2 u$ can be computed using the formula
\begin{equation*}
\nabla_j\nabla_i u^I = \del{j}\nabla_i u^I -\Gamma_{ji}^k\nabla_k u^I + \omega_{j J}^I \nabla_i u^J,
\end{equation*}
where $\nabla_i u^I$ is as defined above, and $\Gamma_{ij}^k$ are the Christoffel symbols of $g$. 
Similar formulas exist for the higher covariant derivatives.

\subsection{Inner-products and operator inequalities}
We define the \textit{norm} of $v\in V_x$, $x\in \Sigma$, by
\begin{equation*}
|v|^2 = h(x)(v,v).
\end{equation*}
Using this norm, we then define, for $R>0$, the \textit{bundle of open balls of radius $R$ in $V$} by 
\begin{equation*}
B_R(V) = \{\, v\in V \, | \, |v|<R\,\}.
\end{equation*}
Given elements $v, w \in V_x\otimes T^0_s(\Sigma_x)$, we can expand them in a local basis as
\begin{equation*}
v= v^I_{i_1 i_2 \cdots i_s} e_I\otimes dx^{i_1}\otimes dx^{i_2}\otimes \cdots \otimes dx^{i_s} \AND w= w^I_{i_1 i_2 \cdots i_s} e_I\otimes dx^{i_1}\otimes dx^{i_2}\otimes \cdots \otimes dx^{i_s},
\end{equation*}
respectively.
Using these expansions, we define the inner-product of $v$ and $w$ by
\begin{equation*}
\ipe{v}{w} = g^{i_1 j_1} g^{i_2 j_2} \cdots g^{i_s j_s} h_{IJ}v^I_{i_1 i_2 \cdots i_s}w^J_{j_1 j_2 \cdots j_s},
\end{equation*}
while the norm of $v$ is defined via
\begin{equation*}
|v|^2 = \ipe{v}{v}.
\end{equation*}

For $A\in L(V_x)$, the \textit{operator norm} of $A$, denoted $|A|_{\op}$, is defined by
\begin{equation*}
|A|_{\op} =\sup\bigl\{\, |\ipe{w}{A v}| \, \bigl| \, w,v \in B_1(V_{x}) \,\bigr\}. 
\end{equation*}
We also define  a related norm for elements $A\in L(V_x) \otimes T_x^*\Sigma$  by
\begin{equation*}
|A|_{\op} = \sup\bigl\{\, |\ipe{v}{Aw}| \, \bigl| \, (v,w) \in B_1(V_x \otimes T_x^*\Sigma)\times B_1(V_{x}) \,\bigr\}. 
\end{equation*}
From this definition, it not difficult to verify that
%\footnote{To see this for $s=1$, introduce an orthonormal basis on both $T_x^*\Sigma$ and $V_x$. Then this relationship is a direct consequence of the following inequality:
%\begin{align*}
%\delta_{IJ}\delta^{ij}\delta^{kl} v^I_{ik} A_{j K}^J w_{l}^K = \delta^{kl}\bigl(\delta_{IJ}\delta^{ij}v^I_{ik} A_{j K}^J w_{l}^K\bigr)  &\leq  |A|_{\op} \delta^{kl}\sqrt{\delta^{ij}\delta_{IJ}v_{ik}^I v_{jk}^J}\sqrt{\delta_{IJ}w_l^I w_l^J} && \text{(by definition of $|A|_{\op}$)}\\
%& \leq |A|_{\op} \sqrt{\sum_{k}\delta^{ij}\delta_{IJ}v_{ik}^I v_{jk}^J}\sqrt{\sum_{l}\delta_{IJ}w_l^I w_l^J} && \text{(by Cauchy-Schwartz)}\\
%& \leq |A|_{\op} \sqrt{\delta^{kl}\delta^{ij}\delta_{IJ}v_{il}^I v_{jk}^J}\sqrt{\delta^{kl}\delta_{IJ}w_k^I w_l^J} = |A|_{\op}|v||w|. \\
%\end{align*}} 
\begin{equation*}
|A|_{\op} = \sup \bigl\{\, |\ipe{v}{Aw}| \, \bigl|\, (v,w)\in B_1(V_x\otimes T^0_{s+1}(T_x \Sigma))\times B_1(V_x\otimes T^0_{s}(T_x \Sigma))\,\bigr\}.
\end{equation*}
This definition can also be extended to elements of $A\in L(V_x) \otimes T_x^*\Sigma\otimes T_x \Sigma$ in a similar fashion, that is
\begin{equation*}
|A|_{\op} = \sup\bigl\{\, |\ipe{v}{Aw}| \, \bigl| \, (v,w) \in B_1(V_x \otimes T_x^*\Sigma\otimes T_x \Sigma)\times B_1(V_{x}) \,\bigr\},
\end{equation*}
where again we have that
\begin{equation*}
|A|_{\op} = \sup \bigl\{\, |\ipe{v}{Aw}| \, \bigl|\, (v,w)\in B_1(V_x\otimes T^1_{s+1}(T_x \Sigma))\times B_1(V_x\otimes T^0_{s}(T_x \Sigma))\,\bigr\}.
\end{equation*}
Finally, for $A,B \in L(V_x)$, we define
\begin{equation*}
A\leq B
\end{equation*}
if and only if 
\begin{equation*}
\ipe{v}{A v} \leq \ipe{v}{B v}, \quad \forall\: v \in  V_x.
\end{equation*}

\subsection{Constants, inequalities and order notation}
We will use the letter $C$ to denote generic constants whose exact dependence on other quantities is not required and whose value may change from line to line. For such constants,
we will often employ 
the standard notation
\begin{equation*}
a \lesssim b
\end{equation*}
for inequalities of the form
\begin{equation*}
a \leq Cb.
\end{equation*}
On the other hand, when the dependence of the constant on other inequalities needs to be specified, for
example if the constant depends on the norm $\norm{u}_{L^\infty}$, we use the notation
\begin{equation*}
C=C(\norm{u}_{L^\infty}).
\end{equation*}
Constants of this type will always be \textit{non-negative, non-decreasing, continuous functions of their arguments}.

Given four vector bundles $V$, $W$, $Y$ and $Z$ that sit over $\Sigma$, and
maps 
\begin{equation*}
f\in C^0\bigl([T_0,0),C^\infty(B_R(W)\times B_R(V),Z)\bigr)
\AND
g\in C^0\bigl([T_0,0),C^\infty(B_{R}(V),Y)\bigr),
\end{equation*}
we say that
\begin{equation*}
f(t,w,v) = \Ordc(g(t,v))  
\end{equation*}
if there exist a $\Rt \in (0,R)$ and a map
\begin{equation*}
\ft \in C^0\bigl([T_0,0),C^\infty(B_{\Rt}(W)\times B_{\Rt}(V),L(Y,Z))\bigr)
\end{equation*}
such that\footnote{Here, we are using $\nabla_{w,v}$ to denote a covariant derivative operator on the product manifold $W\times V$. Since $\Sigma$ is compact, we know
that such a covariant derivative always exists and it does not matter for our purposes which one is employed.}
\begin{gather*}
f(t,w,v) = \tilde{f}(t,w,v)g(t,v),\\
|\ft(t,w,v)| \leq 1 \AND
|\nabla^s_{w,v}\ft(t,w,v)| \lesssim 1
\end{gather*}
for all $(t,w,v) \in  [T_0,0) \times B_{\Rt}(W) \times B_{\Rt}(V)$ and $s\geq 1$. For situations, where we want to bound $f(t,w,v)$ by $g(t,v)$ up to an undetermined constant of proportionality,
we define
\begin{equation*}
f(t,w,v) = \Ord(g(t,v))  
\end{equation*}
if there exist a $\Rt \in (0,R)$ and a map
\begin{equation*}
\ft \in C^0\bigl([T_0,0),C^\infty(B_R(W)\times B_R(V),L(Y,Z))\bigr)
\end{equation*}
such that 
\begin{gather*}
f(t,w,v) = \tilde{f}(t,w,v)g(t,v)
\intertext{and}
|\nabla^s_{w,v}\ft(t,w,v)| \lesssim 1
\end{gather*}
for all $(t,w,v) \in  [T_0,0) \times B_{\Rt}(W) \times B_{\Rt}(V)$ and $s\geq 0$.

\subsection{Sobolev spaces}
The $W^{k,p}$, $k\in \Zbb_{\geq 0}$, norm of a section $u\in \Gamma(V)$ is defined by
\begin{equation*}
\norm{u}_{W^{k,p}} = \begin{cases} \begin{displaystyle}\biggl( \sum_{\ell=0}^k \int_{\Sigma} |\nabla^{\ell} u|^p \nu_g\biggl)^{\frac{1}{p}}  \end{displaystyle} & \text{if $1\leq p < \infty $} \\
 \begin{displaystyle} \max_{0\leq \ell \leq k}\sup_{x\in \Sigma}|\nabla^{\ell} u(x)|  \end{displaystyle} & \text{if $p=\infty$}
\end{cases},
\end{equation*}
where $\nu_g \in \Omega^n(\Sigma)$ denotes the volume form of $g$. The Sobolev space $W^{k,p}(V)$ can then be defined as the completion of the space of smooth sections $\Gamma(V)$ in the norm
$\norm{\cdot}_{W^{k,p}}$. When $V=\Sigma \times \Rbb$ or the vector bundle is clear from context, we will often write $W^{k,p}(\Sigma)$ instead. 
Furthermore, when $p=2$, we will employ the standard notation $H^k(V)=W^{k,2}(V)$, and we recall that $H^k(V)$ is a Hilbert space with the inner-product given by
\begin{equation*}
\ip{u}{v}_{H^k} = \sum_{\ell=0}^k \ip{\nabla^{\ell} u}{\nabla^{\ell} v},
\end{equation*}  
where the $L^2$ inner-product $\ip{\cdot}{\cdot}$ is defined by
\begin{equation*}
 \ip{w}{z} = \int_\Sigma \ipe{w}{z} \, \nu_g.
\end{equation*}

\section{Singular symmetric hyperbolic systems}

As discussed in the introduction, our aim is to develop an existence theory for initial value problems (IVPs) of the form
\begin{align}
B^0(t,u)\del{t}u + B^i(t,u)\nabla_{i} u  &= \frac{1}{t}\Bc(t,u)\Pbb u + F(t,u) && \text{in $[T_0,T_1)\times \Sigma$,} \label{symivp.1} \\
u &=u_0 && \text{in $\{T_0\}\times \Sigma$,} \label{symivp.2}
\end{align}
where $T_0 < T_1 \leq 0$ and the coefficients satisfy the assumptions set out in the following section. Since these assumption imply, in particular, 
that \eqref{symivp.1} is symmetric hyperbolic, we know that this evolution equation enjoys the Cauchy stability property satisfied by symmetric hyperbolic
equations. As a direct consequence, the existence of solutions to \eqref{symivp.1}-\eqref{symivp.2} when $T_1<0$ is guaranteed for sufficiently small initial data.
Thus the problem that we will focus on is the existence problem when $T_1=0$. Our main result of this article is that we establish the existence of solutions
to \eqref{symivp.1}-\eqref{symivp.2} for $T_1=0$ under a suitable smallness assumption on the initial data. The precise statement of our existence result is given in Theorem \ref{symthm}, which can viewed as a natural generalization of Theorem B.1 from \cite{Oliynyk:CMP_2016}.

\subsection{Coefficient assumptions\label{coeffassump}}
\begin{enumerate}[(i)]
%\item $T_0<0$, $T_1\in [T_0,0)$, and $u(t,x)$ is a $V$-valued map where, as above, $\pi \: :\: V\longrightarrow \Sigma$ is a rank $N$ vector bundle.
\item The section $\Pbb \in \Gamma(L(V))$ is a time-independent, covariantly constant, symmetric projection
operator, that is,
\begin{equation} \label{Pbbprop}
\Pbb^2 = \Pbb,  \quad  \Pbb^{\tr} = \Pbb, \quad \del{t}\Pbb =0 \AND \nabla \Pbb =0.
\end{equation}
For use below, we define the \textit{complementary projection operator} by
\begin{equation*}
\Pbb^\perp = \id -\Pbb,
\end{equation*}
which by our above assumptions, is also a time-independent,  covariantly constant, symmetric projection operator.
\item There exist constants  $\kappa, \gamma_1, \gamma_2 >0$ such that the maps $B^0 \in 
C^1\bigl([T_0,0), C^\infty(B_R(V),L(V))\bigr)$ 
and $\Bc\in C^0\bigl([T_0,0], C^\infty(B_R(V),L(V))\bigr)$  satisfy
\begin{equation*}
\pi(B^0(t,v))=\pi(\Bc(t,v))=\pi(v), 
\end{equation*}
and
\begin{equation} \label{B0BCbnd}
\frac{1}{\gamma_1} \text{id}_{V_{\pi(v)}} \leq  B^0(t,v)\leq \frac{1}{\kappa} \Bc(t,v) \leq \gamma_2 \textrm{id}_{V_{\pi(v)}}
\end{equation}
for  all $(t,v)\in [T_0,0)\times B_{R}(V)$. Moreover\footnote{Sums such as
$a+bv+cv\otimes v$ should be interpreted as elements of the vector 
bundle
$\Rbb\oplus V \oplus V\otimes V$.},
\begin{align} 
[\Pbb(\pi(v)),\Bc(t,v)] &= 0, \label{BcPbbcom}\\
(B^0(t,v))^{\tr} &= B^0(t,v), \label{B0sym}\\
\Pbb(\pi(v)) B^0(t,v)\Pbb^\perp(\pi(v)) &= \Ord\bigl(|t|^{\frac{1}{2}}+\Pbb(\pi(v)) v\bigr), \label{B0bnd.2}
\intertext{and}
\Pbb^\perp(\pi(v)) B^0(t,v) \Pbb(\pi(v)) &= \Ord\bigl(|t|^{\frac{1}{2}}+\Pbb(\pi(v)) v\bigr),\label{B0bnd.3}
\end{align}
for all $(t,v) \in [T_0,0) \times B_{R}(V)$,
and there exist maps $\Bt^0, \tilde{\Bc} \in C^0\bigl([T_0,0], \Gamma(L(V))\bigr)$
such that
\begin{align}
[\Pbb,\tilde{\Bc}] &=0, \label{BctPbbcom}\\
B^0(t,v)-\Bt^0(t,\pi(v)) &= \Ord(v) \label{B0bnd.1} 
\intertext{and}
\Bc(t,v)-\tilde{\Bc}(t,\pi(v))&=\Ord(v) \label{Bcbnd}
\end{align}
for all $(t,v)\in  [T_0,0)\times B_R(V)$.

\bigskip

\noindent \textit{Note:} In local coordinates $(x,v)=(x^i,v^I)$ on the vector bundle $\pi \: : \: V\longrightarrow \Sigma$, $B^0$ is given
locally by a\footnote{Here, $\Sbb{N}$ denotes the subset of $\Mbb{N}$, i.e. the  $N\times N$-matrices, that are symmetric with respect to the local representation of the vector bundle metric $h$, that is, if $h_{IJ}$ is the local representation of $h$ and
$(h^{IJ}):=(h_{Ij})^{-1}$ is its inverse, then $A^I_J$ will define an element of $\Sbb{N}$ if and only if $h^{IJ}A_J^K=h^{KJ} A_J^I$.} $\Sbb{N}$-valued map $B^0(t,x,v)=\bigl((B^0)^I_J(t,x,v)\bigr)$, while $\Bc$
is given locally by a $\Mbb{N}$-valued map $\Bc(t,x,v)=\bigl(\Bc^I_J(t,x,v)\bigr)$.

\bigskip

\item The map $F\in C^0\bigl([T_0,0), C^\infty(B_R(V),V)\bigr)$ can be expanded as
\begin{equation} \label{fexp}
F(t,v) = \Ft(t,\pi(v)) + F_0(t,v) + |t|^{-\frac{1}{2}}F_1(t,v) + |t|^{-1}F_2(t,v)
\end{equation}
where $\Ft \in C^0\bigl([T_0,0], \Gamma(V)\bigr)$, and
 $F_0,F_1,F_2 \in C^0\bigl([T_0,0], C^\infty(B_R(V),V)\bigr)$
 satisfy
 \begin{equation*}
 \pi (F_a (t,v))=\pi(v), \quad a=0,1,2,
 \end{equation*}
 and
\begin{equation} 
\Pbb(\pi(v)) F_2(t,v) = 0 \label{F2vanish} 
\end{equation}
for all $(t,v)\in [T_0,0]\times B_R(V)$, and
there exist constants $\lambda_a\geq 0$, $a=1,2,3$, such that
\begin{align}
F_0(t,v) &=\Ord(v),  \label{F0bnd}\\
\Pbb(\pi(v)) F_1(t,v) &= \Ordc(\lambda_1 v), \label{F1bnd.1}\\
\Pbb^\perp(\pi(v)) F_1(t,v) &= \Ordc(\lambda_2\Pbb(\pi(v)) v) \label{F1bnd.2}
%\Ordc\biggl(\frac{\lambda_2}{\omega+|\Pbb^\perp u|}|\Pbb u|\biggr), 
\intertext{and}
\Pbb^\perp(\pi(v)) F_2(t,v) & = \Ordc\biggl(\frac{\lambda_3}{R}\Pbb(\pi(v)) v\otimes\Pbb(\pi(v))v \biggr) \label{F2bnd.3}
\end{align}
for all $(t,v)\in  [T_0,0)\times B_R(V)$.

\bigskip

\noindent \textit{Note:} In local coordinates $(x,v)=(x^i,v^I)$ on the vector bundle $\pi \: : \: V\longrightarrow \Sigma$,
$\Ft$, $F$ and $F_a$ are given locally by
$\Rbb^{N}$-valued maps $\Ft(t,x)=(\Ft^I(t,x))$, $F(t,x,v)=(F^I(t,x,v))$ and $F_a(t,x,v)=(F_{a}^I(t,x,v))$, respectively.

\bigskip

\item The  map $B\in C^0\bigl([T_0,0), C^\infty(B_R(V),L(V)\otimes T\Sigma)\bigr)$ satisfies
\begin{equation*}
\pi(B(t,v))=\pi(v)
\end{equation*}
and
\begin{equation*}
\bigl[\sigma(\pi(v))(B(t,v))\bigr]^{\tr}=\sigma(\pi(v))(B(t,v))
\end{equation*}
for all $(t,v)\in [T_0,0)\times B_R(V)$ and $\sigma \in \mathfrak{X}^*(\Sigma)$, where
we are using the notation $\sigma(A)$ to denote the natural action of a differential 1-form $\sigma\in \mathfrak{X}^*(\Sigma)$ on an element of
$A\in \Gamma(L(V)\otimes T\Sigma)$, which if we express $A$ and $\sigma$ locally as
\begin{equation*}
A=A^{iI}_J \theta^J\otimes e_I \otimes \del{i} \AND \sigma = \sigma_i dx^i,
\end{equation*} 
respectively, is given by
\begin{equation*}
\sigma(A) = \sigma_i A^{iI}_J  \theta^J\otimes e_I.
\end{equation*}
Moreover, $B$ can be expanded as
\begin{equation} \label{Bexp}
B(t,v) = B_0(t,v) + |t|^{-\frac{1}{2}}B_1(t,v) + |t|^{-1}B_2(t,v)
\end{equation}
where  $B_0,B_1,B_2 \in C^0\bigl([T_0,0], C^\infty(B_R(V),L(V)\otimes T\Sigma)\bigr)$ satisfy
\begin{equation*}
\pi(B_a(t,v))=\pi(v), \quad a=0,1,2,
\end{equation*}
for all  $(t,v)\in [T_0,0]\times B_R(V)$, and
there exist a constant $\alpha\geq 0$ and a map $\Bt_2\in  C^0\bigl([T_0,0], \Gamma(L(V)\otimes T\Sigma)\bigr)$  such that
\begin{align} 
\Pbb(\pi(v)) B_1(t,v) \Pbb(\pi(v)) &=   \Ord(1), \label{BI1bnd.1} \\
\Pbb(\pi(v)) B_1(t,v) \Pbb^\perp(\pi(v))&=  \Ordc(\alpha), \label{BI1bnd.2}\\
 \Pbb^\perp(\pi(v)) B_1(t,v) \Pbb(\pi(v)) &= \Ordc(\alpha), \label{BI1bnd.3} \\
\Pbb^\perp(\pi(v)) B_1(t,v) \Pbb^\perp(\pi(v)) &= \Ord(\Pbb(\pi(v)) v) , \label{BI1bnd.4}\\
\Pbb(\pi(v)) B_2(t,v) \Pbb^\perp(\pi(v)) &= \Ord(\Pbb(\pi(v)) v) , \label{BI2bnd.1} \\
\Pbb^\perp(\pi(v)) B_2(t,v)\Pbb(\pi(v)) &=  \Ord(\Pbb(\pi(v)) v) , \label{BI2bnd.2}\\
\Pbb^\perp(\pi(v)) B_2(t,v) \Pbb^\perp(\pi(v))  &= \Ord\bigl(\Pbb(\pi(v)) v\otimes \Pbb(\pi(v)) v \bigr) \label{BI2bnd.4}
\intertext{and}
\Pbb(\pi(v))(B_2(t,v)-\Bt_2(t,\pi(v)))\Pbb(\pi(v)) &= \Ord(v) \label{BI2bnd.3}
\end{align}
for all $(t,v)\in  [T_0,0)\times B_R(V)$.

\bigskip

\noindent \textit{Note:} In local coordinates $(x,v)=(x^i,v^I)$ on the vector bundle $\pi \: : \: V\longrightarrow \Sigma$, the maps
$B$, $\Bt_2$ and $B_a$ can be expressed as
\begin{equation*}
B=B^i(t,x,v)\del{i}, \quad \Bt_2=\Bt^i_2(t,x)\del{i} \AND B_a=B_a^i(t,x,v)\del{i},
\end{equation*}
respectively, where $B^i(t,x,v)=((B^i)^I_J(t,x,v))$, 
$\Bt^i(t,x)=((\Bt^i_2)^I_J(t,x))$ and $B_a^i(t,x,v)=((B_a^i)^I_J(t,x,v))$ are $\Sbb{N}$-valued maps. Then expressing $\sigma \in \mathfrak{X}^*(\Sigma)$ locally
as
\begin{equation*}
\sigma = \sigma_i(x) dx^i,
\end{equation*}
we see that
\begin{equation*}
\sigma(B) = B^i(t,x,v)\sigma_i(x), \quad \sigma(\Bt_2)=\Bt_2^i(x,t)\sigma_i(x) \AND \sigma(B_a) = B_a^i(t,x,v)\sigma_i(x).
\end{equation*}
We further note that since $u(t,x)$ is a time-dependent section of the vector bundle $V$, it can be represented locally as
\begin{equation*}
u(t,x) = (x,\uh(t,x)),
\end{equation*}
where $\uh(t,x)=(\uh^J(t,x))$ is $\Rbb^N$-valued. Using this and the above local expressions for $B^0$, $\Bc$ and $B$, we can
write the local version of \eqref{symivp.1} as
\begin{align}
B^0(t,x,\uh(t,x))\del{t}\uh(t,x)+& B^i(t,x,\uh(t,x))\bigl(\del{i}\uh(t,x) + \omega_i(x)\uh(t,x)\bigr) \notag \\
&= \frac{1}{t}\Bc(t,x,\uh(t,x))\Pbb(x)\uh(t,x)
+F(t,x,\uh(t,x)), \label{symloc}
\end{align}
where the $\omega_i = (\omega_{iI}^J)$ are the connection coefficients \eqref{connect}.
From our assumptions, it is then clear that \eqref{symloc} defines a symmetric hyperbolic system in the standard sense.

\bigskip

\item  There exist constants $\theta$ and $\beta_a \geq 0$,
$a=0,1,\ldots,7$, such that the map
\begin{equation*}
\Div\! B \: : \: [T_0,0)\times B_R(V\otimes V\otimes T^*\Sigma)  \longrightarrow L(V)
\end{equation*} 
defined locally by
\begin{align} 
&\Div\!  B(t,x,v,w) = \del{t}B^0(t,x,v)+  D_v B^0(t,x,v)\cdot (B^0(t,x,v))^{-1}\Bigl[-B^i(t,x,v)\cdot w_i \notag \\
&\quad+
 \frac{1}{t}\Bc(t,x,v)\Pbb(x) v + F(t,x,v)
\Bigr]
+ \del{i}B^i(t,x,v)+ D_v B^i(t,x,v)\cdot (w_i-\omega_i(x) v) \notag \\
&\quad + \Gamma^i_{ij}(x)B^j(t,x,v)+\omega_i(x)B^i(t,x,v)-B^i(t,x,v)\omega_i(x), \label{divBdef}
\end{align}
where $v=(v^J)$, $w=(w_i)$, $w_i=(w_i^J)$, $\omega_i=(\omega_{iI}^J)$, and $B^i=(B^{iJ}_I)$,
satisfies
\begin{align}
\Pbb(\pi(v)) \Div \! B(t,v,w) \Pbb(\pi(v)) &= \Ordc\bigl(\theta+  |t|^{-\frac{1}{2}}\beta_0 + |t|^{-1}\beta_1\bigl), \label{divBbnd.1}\\
\Pbb(\pi(v))  \Div\! B(t,v,w) \Pbb^\perp(\pi(v)) &= 
\Ordc\biggl(\theta+|t|^{-\frac{1}{2}}\beta_2
+ \frac{|t|^{-1}\beta_3}{R}\Pbb(\pi(v)) v \biggr), \label{divBbnd.2}\\
\Pbb^\perp(\pi(v)) \Div\! B(t,v,w) \Pbb(\pi(v))&= 
\Ordc\biggl(\theta+|t|^{-\frac{1}{2}}\beta_4
+ \frac{|t|^{-1}\beta_5}{R}\Pbb(\pi(v)) v \biggr) \label{divBbnd.3}
\intertext{and}
\Pbb^\perp(\pi(v)) \Div\! B(t,v,w) \Pbb^\perp(\pi(v))& =\Ordc\biggl(\theta+
\frac{|t|^{-\frac{1}{2}}\beta_6}{R}\Pbb(\pi(v)) v
+ \frac{|t|^{-1}\beta_7}{R^2}\Pbb(\pi(v)) v\otimes\Pbb(\pi(v)) v  \biggr). \label{divBbnd.4}
\end{align}

\noindent \textit{Note:} It is not difficult to verify that 
\begin{equation} \label{divBid}
\Div\! B(t,u(t,x),\nabla u(t,x)) =\del{t}(B^0(t,u(t,x))+ \nabla_{i}(B^i(t,u(t,x)))
\end{equation}
for solutions $u(t,x)$ of \eqref{symivp.1}. 
\end{enumerate}

\begin{rem} \label{kappatrem}
By a straightforward calculation, it can be shown that  \eqref{B0bnd.2} and \eqref{B0bnd.3} imply that $(B^0)^{-1}$ satisfies similar relations given by
\begin{align}
\Pbb(\pi(v)) (B^0)^{-1}(t,v)\Pbb^\perp(\pi(v)) &= \Ord\bigl(|t|^{\frac{1}{2}}+\Pbb(\pi(v)) v\bigr) \label{B0invbnd.1}
\intertext{and}
\Pbb^\perp(\pi(v)) (B^0)^{-1}(t,v) \Pbb(\pi(v)) &= \Ord\bigl(|t|^{\frac{1}{2}}+\Pbb(\pi(v)) v\bigr) \label{B0invbnd.2}
\end{align}
for all $(t,v)\in  [T_0,0)\times B_R(V)$.  Moreover, it is clear from \eqref{B0BCbnd} that there exist constants $0<\gammat_1\leq \gamma_1$ and $\kappat\geq \kappa >0$
such that
\begin{equation} \label{B0BCbndt}
\frac{1}{\gammat_1} \Pbb(\pi(v)) \leq  \Pbb(\pi(v))B^0(t,v)\Pbb(\pi(v))\leq \frac{1}{\kappat} \Bc(t,v)\Pbb(\pi(v))  \leq \gamma_2 \Pbb(\pi(v))
\end{equation}
for  all $(t,v)\in [T_0,0)\times B_R(V)$.
\end{rem}

\subsection{Preliminary estimates}

In the following, for $k\in \Zbb_{n/2+1}$, we let $C_{\text{Sob}}>0 $ be the constant from Sobolev's inequality, that is,
\begin{equation} \label{Sobest}
\max\bigl\{\norm{\nabla u(t)}_{L^\infty},\norm{u(t)}_{L^\infty}\bigr\} \leq C_{\text{Sob}}\norm{u(t)}_{H^k}.
\end{equation}

\begin{prop} \label{Pperpprop}
Suppose  $k\in \Zbb_{>n/2+1}$, $u \in  B_{C_{\text{Sob}}^{-1}R}\bigl(H^{k}(V)\bigr)$, $B^0 = B^0(t,u(x))$, $B=B(t,u(x))$,
$F=F(t,u(x))$ and
\begin{equation*}
G = \Pbb^\perp (B^0)^{-1}\biggl(-B^i\nabla_i u + \frac{1}{t} \Bc \Pbb u + F\biggr). 
\end{equation*}
Then\footnote{See the expansion \eqref{fexp} for the definition
of $\Ft$.}
\begin{align*}
\norm{G}_{H^{k-1}} &\leq C\bigl(\norm{u}_{H^k}\bigr)\biggl(\norm{\Ft}_{H^{k-1}}+1+
\frac{1}{|t|^{\frac{1}{2}}}\norm{\Pbb u}_{H^{k}}-\frac{1}{t}\norm{\Pbb u}_{H^k}\norm{\Pbb u}_{H^{k-1}}\biggr),\\
\norm{|t|^{\frac{1}{2}}G}_{L^2} &\leq C\bigl(\norm{u}_{H^k}\bigr)\biggl(\norm{\Ft}_{H^{k-1}}+1 +\frac{1}{|t|^{\frac{1}{2}}}\norm{\Pbb u}_{L^2}\biggr)
\intertext{and}
\norm{|t|^{\frac{1}{2}}G}_{H^{k-1}} &\leq C\bigl(\norm{u}_{H^k}\bigr)\biggl(\norm{\Ft}_{H^{k-1}}+1 +\frac{1}{|t|^{\frac{1}{2}}}\norm{\Pbb u}_{H^{k-1}}\biggr).
\end{align*}
\end{prop}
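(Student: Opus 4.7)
My plan is to obtain the three inequalities via a pointwise decomposition of $G$ followed by Moser-type product estimates in Sobolev norms. First I would use the expansions \eqref{Bexp} and \eqref{fexp} of $B^i$ and $F$ to split $G$ into pieces of orders $|t|^0$, $|t|^{-1/2}$, and $|t|^{-1}$, and in each of them insert the resolution $\id = \Pbb + \Pbb^\perp$ between every pair of factors. The covariant constancy of $\Pbb$, which gives $\Pbb\nabla u = \nabla(\Pbb u)$, together with the identity $\Bc\Pbb=\Pbb\Bc$ coming from \eqref{BcPbbcom} and the vanishing $\Pbb F_2=0$ from \eqref{F2vanish}, ensures that every resulting summand lives in a definite block of the projector decomposition.

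For each summand I would read off a pointwise estimate from the coefficient bounds \eqref{BI1bnd.1}--\eqref{BI2bnd.4} and \eqref{F0bnd}--\eqref{F2bnd.3}, together with the auxiliary bounds \eqref{B0invbnd.1}--\eqref{B0invbnd.2} on $(B^0)^{-1}$ from Remark \ref{kappatrem}. The structural fact that makes everything work is that the leftmost $\Pbb^\perp$ in $G$ forces every singular contribution to pick up either a factor of $|t|^{1/2}$, coming from $\Pbb^\perp(B^0)^{-1}\Pbb=\Ord(|t|^{1/2}+\Pbb u)$, or a factor of $\Pbb u$, coming from the $\Ord(\Pbb u)$ or $\Ord(\Pbb u\otimes\Pbb u)$ bounds on the $\Pbb^\perp B_a^i\Pbb$, $\Pbb^\perp B_a^i\Pbb^\perp$ and $\Pbb^\perp F_a$ blocks. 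A case check then shows that the worst pointwise contributions to $|G|$ fall into three tiers: an $O(1)$ tier bounded by $|\tilde F|$ plus a smooth function of $u$ times $1+|\nabla u|$; a $|t|^{-1/2}$ tier bounded by such a function times $|\Pbb u|+|\nabla(\Pbb u)|+|\Pbb u||\nabla u|$; and a $|t|^{-1}$ tier bounded by such a function times $|\Pbb u|^2+|\Pbb u||\nabla(\Pbb u)|+|\Pbb u|^2|\nabla u|$.

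From this pointwise classification all three inequalities follow by a standard application of \eqref{Sobest} and the tame product inequality $\|fg\|_{H^{k-1}}\lesssim \|f\|_{L^\infty}\|g\|_{H^{k-1}}+\|g\|_{L^\infty}\|f\|_{H^{k-1}}$, valid because $k-1>n/2$. The trick is to place the $L^\infty$ norm on every factor of $\Pbb u$ appearing in a quadratic product using the sharp bound $\|\Pbb u\|_{L^\infty}\lesssim \|\Pbb u\|_{H^{k-1}}$, and to absorb the remaining $\|\nabla u\|_{L^\infty}\lesssim \|u\|_{H^k}$ and smooth-in-$u$ matrix coefficients into the prefactor $C(\|u\|_{H^k})$. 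This collapses the $|t|^{-1/2}$ tier into $|t|^{-1/2}\|\Pbb u\|_{H^k}$ and the $|t|^{-1}$ tier into $|t|^{-1}\|\Pbb u\|_{H^k}\|\Pbb u\|_{H^{k-1}}$, giving the first inequality. The $L^2$ bound for $|t|^{1/2}G$ is obtained from the pointwise bound on $|t|^{1/2}|G|$ by placing one factor of $\Pbb u$ in $L^2$ and all remaining factors in $L^\infty$ via Sobolev; and the $H^{k-1}$ bound for $|t|^{1/2}G$ follows immediately from the first inequality since the $|t|^{1/2}$ prefactor converts $|t|^{-1}$ to $|t|^{-1/2}$ and lets one factor of $\|\Pbb u\|_{H^k}$ be absorbed into $C(\|u\|_{H^k})$. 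The only real obstacle is the bookkeeping: each of the building blocks $B_a^i\nabla u$, $t^{-1}\Bc\Pbb u$ and $F_a$ produces several subterms after projector decomposition, and verifying that each one fits the three-tier template requires some patience but no ideas beyond the coefficient hypotheses and the Moser-type product estimate.
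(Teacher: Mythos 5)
Your proposal is correct and follows essentially the same route as the paper's proof: insert $\Pbb+\Pbb^\perp$ into the expansions \eqref{Bexp} and \eqref{fexp}, use \eqref{B0invbnd.1}--\eqref{B0invbnd.2} together with the block bounds \eqref{BI1bnd.1}--\eqref{BI2bnd.4} and \eqref{F0bnd}--\eqref{F2bnd.3} to sort the resulting terms by their power of $|t|$ and their $\Pbb u$ content, and conclude with the Sobolev, product and Moser inequalities of Appendix \ref{calc}. The only presentational difference is that the paper estimates the $B^i\nabla_i u$ part and the $t^{-1}\Bc\Pbb u + F$ part separately in each of the three norms rather than through your explicit three-tier template, and your ``pointwise'' classification should be (and, as your use of the tame product estimate shows, is) understood as retaining the factorized $\Ord$-structure so that the $H^{k-1}$ bounds follow, with the final absorption of $\norm{\Pbb u}_{H^k}\le\norm{u}_{H^k}$ into $C(\norm{u}_{H^k})$ being exactly how the paper's estimates close as well.
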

\begin{proof}
Since $\Pbb^\perp$ satisfies the same properties \eqref{Pbbprop} and \eqref{BcPbbcom} as $\Pbb$, the expansion \eqref{Bexp} allows us to write $\Pbb^\perp (B^0)^{-1}B^i\nabla_i u$ as
\begin{align*}
\Pbb^\perp(B^0)^{-1}B^i\nabla_i u =&  \Pbb^\perp(B^0)^{-1}B^i_0\nabla_i u + |t|^{-\frac{1}{2}}\Bigl[\Pbb^\perp(B^{0})^{-1}\Pbb B_1^i\nabla_i u+
\Pbb^\perp(B^0)^{-1}\Pbb^\perp \bigl(\Pbb^\perp B_1^i \Pbb^\perp \nabla_i u  \notag \\
&+ \Pbb^\perp B_1^i \Pbb \nabla_i \Pbb u \bigr)\Bigr]+ |t|^{-1}\Bigl[\Pbb^\perp (B^0)^{-1}\Pbb\bigl(\Pbb B^i_2 \Pbb \nabla_i \Pbb u + \Pbb B^i_2\Pbb^\perp \nabla_i u \bigr) \notag \\
&+\Pbb^\perp (B^0)^{-1}\Pbb^\perp\bigl(\Pbb^\perp B^i_2 \Pbb \nabla_i \Pbb u + \Pbb^\perp B^i_2\Pbb^\perp \nabla_i u \bigr) \Bigr].
\end{align*} 
From this identity, properties \eqref{B0invbnd.1}-\eqref{B0invbnd.2} of $(B^0)^{-1}$ and \eqref{BI1bnd.1}-\eqref{BI2bnd.4} of the expansion coefficients $B^i_a$, $a=0,1,2$, it is clear that the inequalities
\begin{equation}  \label{PperppropP1a}
\norm{\Pbb^\perp(B^0)^{-1}B^i\nabla_i u}_{H^{k-1}} \leq C(\norm{u}_{H^k})\bigl(1 + |t|^{-\frac{1}{2}}\norm{\Pbb u}_{H^{k}} + |t|^{-1}\norm{\Pbb u}_{H^k}
\norm{\Pbb u}_{H^{k-1}}\bigr),
\end{equation}
\begin{equation}  \label{PperppropP1b}
\norm{|t|^{\frac{1}{2}}\Pbb^\perp(B^0)^{-1}B^i\nabla_i u}_{L^2} \leq C(\norm{u}_{H^k})\bigl(1 + |t|^{-\frac{1}{2}}\norm{\Pbb u}_{L^2} \bigr)
\end{equation}
and
\begin{equation} \label{PperppropP1c}
\norm{|t|^{\frac{1}{2}}\Pbb^\perp(B^0)^{-1}B^i\nabla_i u}_{H^{k-1}} \leq C(\norm{u}_{H^k})\bigl(1 + |t|^{-\frac{1}{2}}\norm{\Pbb u}_{H^{k-1}} \bigr)
\end{equation}
follow directly from the Sobolev, product and Moser calculus inequalities, see Theorems \ref{Sobolev}, \ref{calcpropB} and \ref{calcpropC} from Appendix \ref{calc}. 
%But 
%$|t|^{-\frac{1}{2}}\norm{\Pbb u}_{H^k} \lesssim 1+|t|\norm{\Pbb u}_{H^k}^2$ by Young's inequality, and so we conclude from \eqref{PperppropP1a} that 
%\begin{equation} \label{PperppropP1}
%\norm{\Pbb^\perp(B^0)^{-1}B^i\nabla_i u}_{H^{k-1}} \leq C(\norm{u}_{H^k})\biggl(1 - \frac{1}{t}\norm{\Pbb u}^2_{H^k} \biggr).
%\end{equation}
Using similar arguments, it is also not difficult to verify that the estimates
\begin{equation}\label{PperppropP2}
\norm{|t|^{-1}\Pbb^\perp (B^0)^{-1}\Bc \Pbb u}_{H^{k-1}} +\norm{ \Pbb^\perp (B^0)^{-1}F}_{H^{k-1}} \leq C(\norm{u}_{H^{k-1}})\Bigl(\norm{\Ft}_{H^{k-1}} + 1 +|t|^{-1}\norm{\Pbb u}^2_{H^{k-1}} \Bigr),
\end{equation} 
\begin{equation}\label{PperppropP3}
\norm{|t|^{-\frac{1}{2}}\Pbb^\perp (B^0)^{-1}\Bc \Pbb u}_{L^2} +\norm{|t|^{\frac{1}{2}} \Pbb^\perp (B^0)^{-1}F}_{L^2} \leq C(\norm{u}_{H^{k-1}})\Bigl(\norm{\Ft}_{L^2} + 1 + 
|t|^{-\frac{1}{2}}\norm{\Pbb u}_{L^2} \Bigr)
\end{equation}
and
\begin{equation}\label{PperppropP3a}
\norm{|t|^{-\frac{1}{2}}\Pbb^\perp (B^0)^{-1}\Bc \Pbb u}_{H^{k-1}} +\norm{|t|^{\frac{1}{2}} \Pbb^\perp (B^0)^{-1}F}_{H^{k-1}} \leq C(\norm{u}_{H^{k-1}})
\Bigl(\norm{\Ft}_{H^{k-1}} + 1 + 
|t|^{-\frac{1}{2}}\norm{\Pbb u}_{H^{k-1}} \Bigr)
\end{equation}
follow from  \eqref{BcPbbcom}, \eqref{B0invbnd.1}-\eqref{B0invbnd.2} and the expansion \eqref{fexp} for $F$ along with properties \eqref{F2vanish}-\eqref{F2bnd.3} of the expansions
coefficients $F_a$, $a=0,1,2$. The  stated estimates for
$G = \Pbb^\perp (B^0)^{-1}\bigl(-B^i\nabla_i u + \frac{1}{t} \Bc \Pbb u + F\bigr)$ are then an immediate consequence of \eqref{PperppropP1b}-\eqref{PperppropP3a}.
\end{proof}

\begin{rem}\label{Gestrem}
The exact form of the estimate for $\norm{G}_{H^{k-1}}$ from  Proposition \ref{Pperpprop} is crucial for determining the decay rate of $\Pbb^\perp u$.
Improvements to this estimate, which will follow from more restrictive assumptions on the coefficients of \eqref{symivp.1}, can lead to corresponding improvements in the decay rate; see Remark \ref{Gdecay} below.
\end{rem}

\begin{prop} \label{FdivBprop} 
Suppose  $k\in \Zbb_{>n/2+1}$, $u \in  B_{C_{\text{Sob}}^{-1}R}\bigl(H^{k}(V)\bigr)$, $v\in L^2(V\otimes T^0_\ell(\Sigma))$, $F=F(t,u(x))$
and $\Div\! B = \Div\!B(t,u(x),\nabla u(x))$. Then
\begin{equation*}
|\ip{u}{F}|\leq \norm{u}_{L^2}\norm{\Ft}_{L^2} + C\norm{u}_{L^2}\norm{u}_{H^k}
+|t|^{-\frac{1}{2}}(\lambda_1+\lambda_2)\norm{u}_{L^2}\norm{\Pbb u}_{L^2}
+|t|^{-1}\lambda_3\norm{\Pbb u}_{L^2}^2,
\end{equation*}
\begin{equation*}
|\ip{\Pbb u}{\Pbb F}|\leq \norm{\Ft}_{L^2}\norm{\Pbb u}_{L^2} +  C \norm{u}_{L^2}\norm{\Pbb u}_{L^2}
+|t|^{-\frac{1}{2}}\lambda_1 \norm{u}_{L^2}\norm{\Pbb u}_{L^2}
\end{equation*}
and
\begin{align*}
|\ip{v}{\Div\! B v}| \leq &4 \theta \norm{v}^2_{L^2}+|t|^{-\frac{1}{2}}\biggl((\beta_0+\beta_2+\beta_4)\norm{v}_{L^2}\norm{\Pbb v}_{L^2}  
+ \frac{\beta_6}{R} \norm{|v|^2 |\Pbb u|}_{L^1}\biggr) \\
&+ |t|^{-1}\biggl(\beta_1 \norm{\Pbb v}_{L^2}^2+ \frac{\beta_3+\beta_5}{R} \norm{|v||\Pbb v| |\Pbb u|}_{L^1} + \frac{\beta_7}{R^2} \norm{|v|^2|\Pbb u|^2}_{L^1}\Bigr)
\end{align*}
where the constants $\theta$, $\lambda_a$, $a=1,2,3$, and $\beta_b$, $b=0,1,\ldots,7$, are as defined in Section \ref{coeffassump}, see
assumptions (iii) and (v).
\end{prop}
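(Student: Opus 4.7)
The plan is to prove all three estimates by expanding $F$ via \eqref{fexp} (and using the block bounds \eqref{divBbnd.1}--\eqref{divBbnd.4} on $\Div\! B$), decomposing the test vector as $u = \Pbb u + \Pbb^\perp u$ (respectively $v = \Pbb v + \Pbb^\perp v$), and then applying Cauchy--Schwarz, H\"older's inequality and the Sobolev embedding $\norm{u}_{L^\infty} \le C_{\text{Sob}}\norm{u}_{H^k} \le R$ to close each term.

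For the first estimate I would write
\begin{equation*}
\ip{u}{F} = \ip{u}{\Ft} + \ip{u}{F_0} + |t|^{-\frac{1}{2}} \ip{u}{F_1} + |t|^{-1}\ip{u}{F_2}
\end{equation*}
and treat the four pieces separately. The $\Ft$ contribution is immediate from Cauchy--Schwarz. For $F_0=\Ord(v)$ the pointwise bound $|F_0(t,u)|\lesssim|u|$ gives $|\ip{u}{F_0}| \le C\norm{u}_{L^2}^2 \le C\norm{u}_{L^2}\norm{u}_{H^k}$. For $F_1$, the self-adjointness $\Pbb^{\tr}=\Pbb$ together with $\Pbb\Pbb^\perp = 0$ kills the cross terms and produces $\ip{u}{F_1} = \ip{\Pbb u}{\Pbb F_1} + \ip{\Pbb^\perp u}{\Pbb^\perp F_1}$; then \eqref{F1bnd.1} and \eqref{F1bnd.2} yield the factor $(\lambda_1+\lambda_2)\norm{u}_{L^2}\norm{\Pbb u}_{L^2}$. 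For $F_2$, the identity $\Pbb F_2=0$ collapses $\ip{u}{F_2}$ to $\ip{\Pbb^\perp u}{\Pbb^\perp F_2}$, and \eqref{F2bnd.3} combined with the pointwise bound $|\Pbb^\perp u|\le R$ gives the quadratic term $\lambda_3\norm{\Pbb u}_{L^2}^2$.

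The second estimate for $|\ip{\Pbb u}{\Pbb F}|$ is easier: since $\Pbb F_2 = 0$, only $\Pbb\Ft$, $\Pbb F_0$ and $|t|^{-\frac{1}{2}}\Pbb F_1$ contribute, and each is handled by Cauchy--Schwarz together with \eqref{F0bnd} and \eqref{F1bnd.1}. For the third estimate I would split $v$ on both slots of $\ip{v}{\Div\! B\, v}$ to obtain four summands obtained by inserting either $\Pbb$ or $\Pbb^\perp$ on each slot, each controlled by the matching block bound \eqref{divBbnd.1}--\eqref{divBbnd.4}. Using the pointwise inequalities $|\Pbb v|, |\Pbb^\perp v|\le|v|$, the $\beta_0,\beta_2,\beta_4$ contributions add up to the $|t|^{-\frac{1}{2}}(\beta_0+\beta_2+\beta_4)\norm{v}_{L^2}\norm{\Pbb v}_{L^2}$ term, the $\beta_3,\beta_5$ contributions produce the $\norm{|v||\Pbb v||\Pbb u|}_{L^1}$ integral, and $\beta_6,\beta_7$ give the stated $L^1$ norms involving $|v|^2|\Pbb u|$ and $|v|^2|\Pbb u|^2$. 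The four $\theta$-contributions sum via Young's inequality $ab\le\tfrac{1}{2}(a^2+b^2)$ to at most $4\theta\norm{v}_{L^2}^2$.

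The main obstacle is not conceptual but organizational: one must ensure that cross terms vanish at the right moment (through $\Pbb\Pbb^\perp=0$ and $\Pbb F_2=0$) and that the pointwise bounds are distributed among $|\Pbb v|,|\Pbb^\perp v|,|v|,|\Pbb u|$ in exactly the way demanded by the stated inequalities, so that the $\Pbb$-factors line up and the $L^1$ norms match. No new analytic idea is required beyond the pointwise $\Ord$/$\Ordc$ bounds and H\"older's inequality.
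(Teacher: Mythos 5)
Your proposal is correct and follows essentially the same route as the paper's proof: expand $F$ via \eqref{fexp}, use $\Pbb^{\tr}=\Pbb$, $\Pbb\Pbb^\perp=0$ and \eqref{F2vanish} to isolate the projected pieces, bound them with \eqref{F0bnd}--\eqref{F2bnd.3} together with Cauchy--Schwarz, H\"older and the Sobolev bound $\norm{u}_{L^\infty}\le C_{\text{Sob}}\norm{u}_{H^k}\le R$, and split $\ip{v}{\Div\! B\, v}$ into the four projected blocks estimated by \eqref{divBbnd.1}--\eqref{divBbnd.4}. No gaps.
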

\begin{proof}
Using \eqref{fexp} to expand the inner-product $\ip{u}{F}$ as
\begin{equation*}
\ip{u}{F} =\ip{u}{\Ft(t)} +   \ip{u}{F_0(t,u)} + |t|^{-\frac{1}{2}}\ip{u}{F_1(t,u)} + |t|^{-1}\ip{u}{F_2(t,u)},
\end{equation*}
we deduce, with the help of the triangle and Cauchy-Schwartz inequalities, that
\begin{equation} 
|\ip{u}{F(t,u)}| \leq \norm{u}_{L^2}\norm{\Ft(t)}_{L^2} +   \norm{u}_{L^2}\norm{F_0(t,u)}_{L^2} + |t|^{-\frac{1}{2}}|\ip{u}{F_1(t,u)}| + |t|^{-1}|\ip{u}{F_2(t,u)}|.
\label{FdivBpropP1}
\end{equation}
We also observe that
\begin{align*}
\ip{u}{F_1(t,u)} &= \ip{u}{(\Pbb+\Pbb^\perp)F_1(t,u)}  = \ip{\Pbb u}{\Pbb F_1(t,u)} +\ip{\Pbb^\perp u}{\Pbb^\perp F_1(t,u)}.%\label{FdivBpropP2}
\end{align*}
Taking the absolute value, we find using \eqref{F1bnd.1} and \eqref{F1bnd.2} along with the triangle and Cauchy-Schwartz inequalities that
\begin{equation} \label{FdivBpropP4}
|\ip{u}{F_1(t,u)}| \leq  \norm{\Pbb u}_{L^2} \lambda_1 \norm{u}_{L^2} + \norm{\Pbb^\perp u}_{L^2} \lambda_2\norm{\Pbb u}_{L^2} \leq (\lambda_1+\lambda_2)\norm{u}_{L^2}\norm{\Pbb u}_{L^2}. 
\end{equation} 

Next, from \eqref{F2bnd.3}, \eqref{Sobest}, and the Cauchy-Schwartz, triangle and Sobolev (Theorem \ref{Sobolev}) inequalities, we have
\begin{equation*}
|\ip{\Pbb^\perp u}{\Pbb^\perp F_2(t,u)}|  %\leq  \int_{\Sigma}|h(\Pbb^\perp u,\Pbb^\perp F_2(t,u))| \, \nu_g 
\leq \int_{\Sigma} |\Pbb^\perp u||\Pbb^\perp F_2(t,u)|\, \nu_g \leq \frac{\lambda_3}{R} \norm{u}_{L^\infty} \int_{\Sigma} |\Pbb u|^2\, \nu_g \leq \lambda_3\norm{\Pbb u}_{L^2}^2.
\end{equation*}
Combining this inequality with \eqref{F2vanish} gives 
\begin{equation} \label{FdivBpropP5}
|\ip{u}{F_2(t,u)}| \leq \lambda_3 \norm{\Pbb u}_{L^2}^2.
\end{equation}
The first estimate
\begin{equation*}
|\ip{u}{F}|\leq \norm{u}_{L^2}\norm{\Ft}_{L^2} + C\norm{u}_{L^2}\norm{u}_{H^k}
+|t|^{-\frac{1}{2}}(\lambda_1+\lambda_2)\norm{u}_{L^2}\norm{\Pbb u}_{L^2}
+|t|^{-1}\lambda_3\norm{\Pbb u}_{L^2}^2
\end{equation*}
then follows from \eqref{F0bnd} and \eqref{FdivBpropP1}-\eqref{FdivBpropP5}.
%, and the calculus inequalities (Moser and Sobolev; Theorems \ref{Sobolev} and \ref{calcpropC}), where we note that $F_0$ satisfies \eqref{F0bnd}. 
The second estimate can be established in a similar manner.

To establish the third estimate, we decompose the inner-product $\ip{v}{\Div\! B v}$ as 
\begin{equation*}
\ip{v}{\Div\! B v} \leq \ip{\Pbb v}{\Pbb\Div\! B\Pbb \Pbb v}+\ip{\Pbb^\perp v}{\Pbb^\perp \Div\! B\Pbb \Pbb v}+\ip{\Pbb v}{\Pbb\Div\! B\Pbb^\perp \Pbb^\perp v}+
\ip{\Pbb^\perp v}{\Pbb^\perp \Div\! B\Pbb^\perp \Pbb^\perp v}.
\end{equation*}
Then, using similar calculations as above, it is not difficult to see that the estimate
\begin{align*}
|\ip{v}{\Div\! B v}| \leq &4 \theta \norm{v}^2_{L^2}+|t|^{-\frac{1}{2}}\biggl((\beta_0+\beta_2+\beta_4)\norm{v}_{L^2}\norm{\Pbb v}_{L^2}  
+ \frac{\beta_6}{R} \norm{|v|^2 |\Pbb u|}_{L^1}\biggr) \\
&+ |t|^{-1}\biggl(\beta_1 \norm{\Pbb v}_{L^2}^2+ \frac{\beta_3+\beta_5}{R} \norm{|v||\Pbb v| |\Pbb u|}_{L^1} + \frac{\beta_7}{R^2} \norm{|v|^2|\Pbb u|^2}_{L^1}\Bigr)
\end{align*}
is a consequence of the bounds \eqref{divBbnd.1}-\eqref{divBbnd.4} and the Cauchy-Schwartz, triangle and Sobolev inequalities.
\end{proof}

Given $\Asc \in \Gamma(L(V))$ and $v\in \Gamma(V)$, we can expand $\nabla^k(\Asc v)$ as
\begin{equation*}
\nabla^k(\Asc v) = \Asc \nabla^k v + \nabla \Asc \star \nabla^{k-1}v + \sum_{\ell=2}^k \nabla^\ell \Asc \star \nabla^{k-\ell}v
\end{equation*}
where $\nabla^\ell \Asc \star \nabla^{k-\ell}v$ involves sums of tensor products of the covariant derivatives $\nabla^\ell\Asc $ and $\nabla^{k-\ell}v$. In particular,
$\nabla \Asc \star \nabla^{k-1}v$ is given locally by 
\begin{equation*}
(\nabla\Asc \star \nabla^{k-1}v)^I_{i_k i_{k-1}\cdots i_1}
= \sum_{\ell=1}^{k} \nabla_{i_\ell}\Asc^{I}_J \nabla_{i_{k}}\cdots \nabla_{i_{\ell+1}} \nabla_{i_{\ell-1}}\cdots \nabla_{i_1}v^J.
\end{equation*}

\begin{lem} \label{commlem}
Suppose  $k\in \Zbb_{>n/2+2}$, $1\leq \ell \leq k$,  $\Asc \in H^{k}(L(V))$,  $\Bsc \in H^{k-2}(L(V))$,  $\Csc \in H^{k-1}(L(V))$,
$v \in L^{2}(V\otimes T_\ell^0(\Sigma))$ and $w\in H^{k-1}(V)$.  
Then there exists a constant $C>0$ such that
\begin{equation*}
|\ip{v}{\Bsc[\nabla^\ell,\Asc]\Csc w}|\leq \ell\norm{|\Bsc \nabla \Asc \Csc|_{\op}}_{L^\infty}\norm{v}_{L^2}\norm{w}_{H^{k-1}}
+C \norm{\Asc}_{H^{k}}\norm{\Bsc}_{H^{k-2}} \norm{\Csc}_{H^{k-1}} \norm{v}_{L^2}\norm{w}_{H^{k-2}}.
\end{equation*}
\end{lem}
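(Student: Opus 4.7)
My approach is to decompose $[\nabla^\ell,\Asc]\Csc w$ into a dominant piece that produces the coefficient $\ell$ via the operator norm, plus a remainder in which at least two derivatives fall on $\Asc$ and $\Csc$ combined, so that $w$ carries at most $\ell-2$ derivatives and can be controlled by Sobolev product estimates.

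First, I would apply the product expansion for $\nabla^\ell(\Asc f)$ stated immediately before the lemma (with $f = \Csc w$) to obtain
\begin{equation*}
[\nabla^\ell, \Asc]\Csc w = \nabla\Asc \star \nabla^{\ell-1}(\Csc w) + \sum_{j=2}^{\ell} \nabla^j \Asc \star \nabla^{\ell-j}(\Csc w).
\end{equation*}
Next, I would Leibniz-expand each $\nabla^{\ell-j}(\Csc w)$ into a sum of terms $\nabla^p \Csc \star \nabla^{\ell-j-p}w$ and single out the summand $(j,p) = (1,0)$, namely $\nabla\Asc \star (\Csc \nabla^{\ell-1}w)$. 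According to the explicit local formula displayed before the lemma, this summand is a sum of exactly $\ell$ terms, indexed by the position the $\nabla\Asc$ slot occupies among the $\ell$ free covariant indices.

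Second, for each of these $\ell$ terms I would use the extended operator norm inequality
\begin{equation*}
|\ipe{v}{\Bsc \nabla \Asc \,\Csc\, \nabla^{\ell-1} w}| \leq |\Bsc \nabla\Asc \Csc|_{\op}\, |v|\, |\nabla^{\ell-1}w|
\end{equation*}
(valid pointwise by the definition of $|\cdot|_{\op}$ on $L(V_x)\otimes T_x^*\Sigma$ applied to a $V\otimes T^0_\ell(\Sigma)$-valued factor against a $V\otimes T^0_{\ell-1}(\Sigma)$-valued factor), integrate against $\nu_g$, and apply the Cauchy--Schwarz inequality in $L^2$ together with $\norm{\nabla^{\ell-1}w}_{L^2}\leq \norm{w}_{H^{k-1}}$. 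Summing the $\ell$ contributions gives the first term $\ell\,\norm{|\Bsc\nabla\Asc\Csc|_{\op}}_{L^\infty}\norm{v}_{L^2}\norm{w}_{H^{k-1}}$.

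Third, every remaining term in the expansion has the schematic form $\Bsc\cdot \nabla^\alpha\Asc \star \nabla^\beta\Csc \star \nabla^\gamma w$ with $\alpha+\beta+\gamma = \ell$, $\alpha \geq 1$, and $(\alpha,\beta,\gamma) \neq (1,0,\ell-1)$; thus $\alpha \geq 2$ or $\beta \geq 1$, which forces $\gamma \leq \ell-2 \leq k-2$. Using Sobolev embedding $H^{k-2}\hookrightarrow L^\infty$ (available since $k > n/2 + 2$) together with the product and Moser calculus estimates (Theorems \ref{calcpropB} and \ref{calcpropC}) to distribute the factors, each such term obeys
\begin{equation*}
\norm{\Bsc\cdot \nabla^\alpha\Asc \star \nabla^\beta\Csc \star \nabla^\gamma w}_{L^2} \lesssim \norm{\Asc}_{H^k}\norm{\Bsc}_{H^{k-2}}\norm{\Csc}_{H^{k-1}}\norm{w}_{H^{k-2}}.
\end{equation*}
Pairing with $v\in L^2$ by Cauchy--Schwarz and summing over the finitely many multi-indices $(\alpha,\beta,\gamma)$ produces the second summand in the desired inequality. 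The main obstacle is the bookkeeping in this third step: one must verify that every admissible redistribution of derivatives still admits a valid Moser estimate with exactly the prescribed Sobolev regularities $(k, k-2, k-1, k-2)$ for $(\Asc, \Bsc, \Csc, w)$, which hinges on the assumption $k > n/2 + 2$ so that at least one low-derivative factor in every product can be moved into $L^\infty$ via Sobolev embedding.
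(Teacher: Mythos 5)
Your proposal is correct and follows essentially the same route as the paper: isolate the term $\nabla\Asc\star\Csc\nabla^{\ell-1}w$, bound it pointwise by the operator norm $|\Bsc\nabla\Asc\Csc|_{\op}$ with the factor $\ell$ coming from the $\ell$ index positions, and control all remaining terms (which put at most $\ell-2\leq k-2$ derivatives on $w$) by the H\"older, Sobolev, product and Moser inequalities. The only cosmetic difference is that the paper groups the "derivatives on $\Csc$" remainder as $\nabla\Asc\star[\nabla^{\ell-1},\Csc]w$ and invokes the commutator estimate, whereas you Leibniz-expand it fully; the resulting bounds are the same.
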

\begin{proof}
Since
\begin{align*}
\Bsc [\nabla^{\ell},\Asc]\Csc w &=\Bsc \nabla^{\ell}(\Asc \Csc w) - \Bsc\Asc \nabla^\ell (\Csc w) \\
& = \Bsc \Asc \nabla^\ell( \Csc w) + \Bsc\nabla \Asc \star \nabla^{\ell-1}(\Csc w) +\Bsc \sum_{m=2}^\ell \nabla^m \Asc \star \nabla^{\ell-m}(\Csc w) -\Bsc  \Asc \nabla^\ell (\Csc w) \\
& = \Bsc\nabla \Asc \star \Csc \nabla^{\ell-1}w +  \Bsc\nabla \Asc \star[ \nabla^{\ell-1},\Csc] w + \Bsc\sum_{m=2}^\ell \nabla^m \Asc \star \nabla^{\ell-m}(\Csc w),
\end{align*}
we get that
\begin{equation} \label{commlemP1}
\ip{v}{\Bsc[\nabla^{\ell},\Asc]\Csc w} = \ip{\Bsc^{\tr}v}{\nabla \Asc \star \Csc \nabla^{\ell-1}w} + \ip{\Bsc^{\tr} v}{\nabla \Asc \star[ \nabla^{\ell-1},\Csc] w}
+ \sum_{m=2}^\ell \ip{\Bsc^{\tr} v}{ \nabla^m \Asc \star \nabla^{\ell-m}(\Csc w)}.
\end{equation}
Using the calculus inequalities (H\"{o}lder, Sobolev, product and commutator - Theorems \ref{Holder}, \ref{Sobolev} and \ref{calcpropB}), we can estimate the second and third terms in the above expression by
\begin{align} 
|\ip{\Bsc^{\tr}v}{\nabla \Asc \star[ \nabla^{\ell-1},\Csc] w}|& \lesssim  \norm{\Bsc}_{L^\infty}\norm{v}_{L^2}\norm{\nabla \Asc}_{L^\infty} \norm{[ \nabla^{\ell-1},\Csc] w}_{L^2} \notag \\
&\lesssim \norm{v}_{L^2} \norm{\Asc}_{H^{k-1}}\norm{\Bsc}_{H^{k-2}}\norm{\Csc}_{H^{k-1}}
\norm{w}_{H^{k-2}}\label{commlemP2}
\end{align}
and
\begin{align}
\biggl| \sum_{m=2}^\ell \ip{\Bsc^{\tr}v}{ \nabla^m\Asc \star \nabla^{\ell-m}(\Csc w)}\biggr| 
&\lesssim  \sum_{m=2}^\ell\norm{\Bsc}_{L^\infty} \norm{v}_{L^2}\norm{\nabla^m \Asc \nabla^{\ell-m}(\Csc w)}_{L^2} \notag \\
&\lesssim \sum_{m=2}^\ell \norm{v}_{L^2} \norm{\Bsc}_{H^{k-2}} \norm{\nabla^m\Asc}_{H^{\ell-m}}\norm{\nabla^{\ell-m}(\Csc w)}_{H^{k-2-(\ell-m)}}\notag \\
&\lesssim \norm{v}_{L^2}\norm{\Asc}_{H^{\ell}}\norm{\Bsc}_{H^{k-2}} \norm{\Csc w}_{H^{k-2}} \notag \\
&\lesssim \norm{v}_{L^2}\norm{\Asc}_{H^{k}}\norm{\Bsc}_{H^{k-2}} \norm{\Csc}_{H^{k-2}} \norm{w}_{H^{k-2}}, \label{commlemP3}
\end{align}
respectively.

Noting that we can write the first term in \eqref{commlemP1} as 
\begin{equation*}
 \ip{v}{\Bsc \nabla \Asc \star \Csc \nabla^{\ell-1} w} =\sum_{m=1}^{\ell} \int_\Sigma \ipe{S_m v}{\Bsc \nabla \Asc \Csc \nabla^{\ell-1}w}\, \nu_g,
\end{equation*}
where the $S_m$ are linear maps that rearranges the order of the covariant tensor indices, we see that
\begin{align}
 \bigl|\ip{v}{\Bsc \nabla \Asc \star \Csc \nabla^{\ell-1} w}\bigr| &\leq  \sum_{m=1}^{\ell}\int_\Sigma\bigl|\ipe{S_m v}{\Bsc \nabla \Asc \Csc \nabla^{\ell-1}w}\bigr|\, \nu_g
 && \text{(by the triangle inequality)}\notag\\
 &\leq  \sum_{m=1}^{\ell}\int_\Sigma |\Bsc \nabla \Asc \Csc|_{\op} |S_m v||\nabla^{\ell-1}w|\, \nu_g && \text{ (by definition of the norm $|\cdot|_{\op}$)}\notag \\
 &\leq  \ell \int_\Sigma |\Bsc \nabla \Asc \Csc|_{\op} |v||\nabla^{\ell-1}w|\, \nu_g && \text{ (since $|S_m v|=|v|$)}\notag \\
 &\leq \ell \||\Bsc \nabla \Asc \Csc|_{\op}\|_{L^\infty}\norm{v}_{L^2}\norm{w}_{H^{k-1}}, \label{commlemP4}
\end{align}
where in deriving the last inequality we used H\"{o}lder's inequality. The proof now follows \eqref{commlemP1}, \eqref{commlemP2}, \eqref{commlemP3}, \eqref{commlemP4}
and the triangle inequality.
\end{proof}

\begin{prop} \label{Bcommprop}
Suppose  $k\in \Zbb_{>n/2+2}$, $1\leq \ell \leq k$, $v\in L^2(V\otimes T_\ell^0(\Sigma) )$, $u \in  B_{C_{\text{Sob}}^{-1}R}\bigl(H^{k}(V)\bigr)$,
$\Bc=\Bc(t,u(x))$, $B^0=B^0(t,u(x))$ and $B=B(t,u(x))$. Then
\begin{gather*}
|\ip{v}{\Bc \nabla^\ell (\Bc^{-1}F)}| + |\ip{v}{\Bc[\nabla^\ell,\Bc^{-1}B^0](B^{0})^{-1}F}| \leq C\bigl(\norm{v}_{L^2}
\norm{\Ft}_{H^k} + \Xi\bigl), \\
|\ip{v}{\Bc[\nabla^\ell,\Bc^{-1}B^0](B^{0})^{-1} t^{-1}\Bc \Pbb u}| \leq C\bigl(|t|^{-1}\norm{\Pbb v}_{L^2}\norm{\Pbb u}_{H^{k-1}}
+\Xi\bigr),
\end{gather*}
\begin{align*}
|\ip{v}{\Bc [\nabla^\ell,\Bc^{-1}B^i]\nabla_{i}u}| + |\ip{v}{\Bc[\nabla^\ell,\Bc^{-1}B^0](B^{0})^{-1}B^i\nabla_{i} u}|
\leq& |t|^{-1}\Bigl(\ell\mathtt{b}\norm{\Pbb v}_{L^2}\norm{\Pbb u}_{H^k} \\
&+ C\norm{\Pbb v}_{L^2}\norm{\Pbb u}_{H^{k-1}}\Bigr)
+ C\Xi,
\end{align*}
and
\begin{equation*}
|\ip{v}{B^i [\nabla^\ell,\nabla_i]u}| \leq C\bigl(|t|^{-1}\norm{\Pbb v}_{L^2}\norm{\Pbb u}_{H^{k-1}}+ \Xi \bigr),
\end{equation*}
where $C=C(\norm{u}_{H^k})$,
\begin{equation*}
\Xi = \norm{v}_{L^2}\norm{u}_{H^k}+|t|^{-\frac{1}{2}}\Bigl(\norm{v}_{L^2}\norm{\Pbb u}_{H^k}
+\norm{\Pbb v}_{L^2}\norm{u}_{H^k}\Bigr)+|t|^{-1}\Bigl(\norm{v}_{L^2}\norm{\Pbb u}^2_{H^k} + 
\norm{\Pbb v}_{L^2}\norm{u}_{H^k}\norm{\Pbb u}_{H^k}\Bigl)
\end{equation*}
and
\begin{align*}
\mathtt{b} =  \sup_{T_0 \leq t < 0}\Bigl( \big\|\big|\Pbb\tilde{\Bc}(t)\nabla(\tilde{\Bc}(t)^{-1}\tilde{B}^0(t))\tilde{B}^0(t)^{-1}\Pbb \tilde{B}_2(t)\Pbb\bigl|_{\op}\bigr\|_{L^\infty}
+ \big\|\big|\Pbb\tilde{\Bc}(t)\nabla(\tilde{\Bc}(t)^{-1}\tilde{B}_2(t))\Pbb\bigl|_{\op}\bigr\|_{L^\infty} \Bigr).
\end{align*}
\end{prop}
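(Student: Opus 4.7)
The plan is to apply the commutator Lemma \ref{commlem} to each of the four inner products with a carefully chosen role assignment for $\Asc,\Bsc,\Csc,w$, and then dispatch the resulting pieces using the expansions \eqref{fexp} for $F$ and \eqref{Bexp} for $B^i$ together with the projection-structure conditions of Section \ref{coeffassump}. In every application the ``remainder'' term $C\norm{\Asc}_{H^k}\norm{\Bsc}_{H^{k-2}}\norm{\Csc}_{H^{k-1}}\norm{v}_{L^2}\norm{w}_{H^{k-2}}$ produced by Lemma \ref{commlem} will be estimated via the Moser/product inequalities (Theorem \ref{calcpropB}) and absorbed into $\Xi$; the leading operator-norm term $\ell\norm{|\Bsc\nabla\Asc\Csc|_{\op}}_{L^\infty}\norm{v}_{L^2}\norm{w}_{H^{k-1}}$ will either also be absorbed into $\Xi$ or, in the crucial $|t|^{-1}$ slot of the third estimate, reduce via the tilde-approximations \eqref{BctPbbcom}, \eqref{B0bnd.1}, \eqref{Bcbnd}, \eqref{BI2bnd.3} to the two operator norms that define $\mathtt{b}$.

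For the first estimate, I would split $F=\Ft+F_0+|t|^{-1/2}F_1+|t|^{-1}F_2$. The $\Ft$ contribution to $\ip{v}{\Bc\nabla^\ell(\Bc^{-1}F)}$ produces $C\norm{v}_{L^2}\norm{\Ft}_{H^k}$ directly by Moser calculus, while \eqref{F0bnd} yields a $C\norm{v}_{L^2}\norm{u}_{H^k}$ piece. The $F_1$ and $F_2$ pieces are handled by first splitting with $\Pbb+\Pbb^\perp$ on the $F$-side, using \eqref{F1bnd.1}, \eqref{F1bnd.2}, \eqref{F2vanish}, \eqref{F2bnd.3} to extract the $\Pbb u$ factors, and using \eqref{BcPbbcom} and $\Pbb^{\tr}=\Pbb$ to transfer projections onto $v$ where advantageous; the resulting bounds land in $\Xi$. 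The companion commutator $\Bc[\nabla^\ell,\Bc^{-1}B^0](B^0)^{-1}F$ is treated by Lemma \ref{commlem} with $\Asc=\Bc^{-1}B^0$, $\Bsc=\Bc$, $\Csc=(B^0)^{-1}$, $w=F$, and the same decomposition for $F$ gives estimates of exactly the same shape.

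The second and third estimates are the singular core. For $\Bc[\nabla^\ell,\Bc^{-1}B^0](B^0)^{-1}t^{-1}\Bc\Pbb u$ I would take $\Asc=\Bc^{-1}B^0$, $\Bsc=\Bc$, $\Csc=(B^0)^{-1}\Bc$, $w=t^{-1}\Pbb u$; because $\Bc$ commutes with $\Pbb$ by \eqref{BcPbbcom}, the rightmost $\Pbb$ in $\Csc w$ can be freely shifted, so the leading Lemma \ref{commlem} term produces exactly the stated $C|t|^{-1}\norm{\Pbb v}_{L^2}\norm{\Pbb u}_{H^{k-1}}$ after the Cauchy--Schwarz pairing against $v$, while the remainder term falls into $\Xi$. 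For the third estimate I would expand $B^i$ as in \eqref{Bexp}; the $B_0^i$ and $|t|^{-1/2}B_1^i$ pieces dissolve into $\Xi$ via \eqref{BI1bnd.1}--\eqref{BI1bnd.4} after exploiting, in each projector slot, the gained $\Pbb u$ factor (e.g.\ \eqref{BI1bnd.4} supplies one $\Pbb u$ for the $\Pbb^\perp B_1^i\Pbb^\perp$ block, matching the $|t|^{-1/2}$ pieces of $\Xi$). The $|t|^{-1}B_2^i$ piece is where the constant $\mathtt{b}$ arises: all blocks other than $\Pbb B_2^i\Pbb$ provide an extra $\Pbb u$ through \eqref{BI2bnd.1}, \eqref{BI2bnd.2}, \eqref{BI2bnd.4} and thus feed $\Xi$, while the $\Pbb B_2^i\Pbb$ block, together with \eqref{BI2bnd.3}, \eqref{B0bnd.1}, \eqref{Bcbnd}, lets me replace $(\Bc,B^0,B_2^i)$ by $(\tilde\Bc,\tilde B^0,\tilde B_2)$ up to $\Ord(v)$ discrepancies absorbed in $\Xi$; the resulting operator-norm supremum is precisely the first summand of $\mathtt{b}$. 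The companion $\Bc[\nabla^\ell,\Bc^{-1}B^i]\nabla_i u$, with $\Asc=\Bc^{-1}B^i$, contributes the second summand of $\mathtt{b}$ by exactly the same mechanism.

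The main obstacle I expect is the projector bookkeeping in estimate three: one must systematically insert $\Pbb+\Pbb^\perp$ around each of $B^0$, $(B^0)^{-1}$, $\Bc$, $B_a^i$, combine the blockwise bounds of Section \ref{coeffassump} and Remark \ref{kappatrem} (in particular \eqref{B0invbnd.1}, \eqref{B0invbnd.2}), and verify that every configuration either carries a compensating $\Pbb u$ (so that the $|t|^{-1/2}$ or $|t|^{-1}$ singularity is absorbed by the corresponding monomials defining $\Xi$) or collapses to one of the two privileged operator norms entering $\mathtt{b}$; the tilde-approximations are indispensable here because the raw $L^\infty$ norms of $B^0$, $B^i_2$, $\Bc$ depend on $u$, whereas $\mathtt{b}$ must be $u$-independent. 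The fourth estimate $|\ip{v}{B^i[\nabla^\ell,\nabla_i]u}|$ is the simplest: the commutator $[\nabla^\ell,\nabla_i]$ is a zero-order bundle endomorphism built from the Riemann tensor of $g$ and the curvature of $\nabla$ on $V$ acting on $\nabla^{\leq k-1}u$, so after expanding $B^i$ via \eqref{Bexp} the same projector/tilde analysis as above, now without a $\nabla$ hitting a coefficient, gives the stated bound with only the $|t|^{-1}\norm{\Pbb v}_{L^2}\norm{\Pbb u}_{H^{k-1}}$ term surviving outside $\Xi$.
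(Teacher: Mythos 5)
Your proposal is correct and follows essentially the same route as the paper's proof: Lemma \ref{commlem} applied with the tilde-coefficient replacements \eqref{B0bnd.1}, \eqref{Bcbnd}, \eqref{BI2bnd.3} to extract the sharp constant $\mathtt{b}$ from the $|t|^{-1}\Pbb B_2\Pbb$ blocks, together with the expansions \eqref{fexp}, \eqref{Bexp} and the systematic $\Pbb$/$\Pbb^\perp$ bookkeeping via \eqref{B0bnd.2}--\eqref{B0bnd.3} and \eqref{B0invbnd.1}--\eqref{B0invbnd.2} to push everything else into $\Xi$. The only organizational difference is that the paper first establishes one master estimate, \eqref{BcommpropP1}, for $\ip{v}{\Bc[\nabla^\ell,\Bc^{-1}B^0](B^0)^{-1}G}$ with generic $G$ and then specializes $G$ to $F$, $\Bc\Pbb u$ and $B^i\nabla_i u$; this also makes explicit that in your second estimate the upgrade from the $\norm{v}_{L^2}$ of Lemma \ref{commlem} to $\norm{\Pbb v}_{L^2}$ comes from precisely this off-diagonal block analysis (with the extra $|t|^{\frac{1}{2}}+\Pbb u$ factors absorbed by $\Xi$), not merely from shifting the rightmost $\Pbb$ through $\Bc$.
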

\begin{proof}
Using \eqref{Pbbprop} and \eqref{BcPbbcom},
we calculate
\begin{align*}
\ip{v}{\Bc[\nabla^\ell,\Bc^{-1}B^0](B^0)^{-1}G} =& \ip{\Pbb v}{\Bc[\nabla^\ell,\Bc^{-1}B^0](B^0)^{-1}\Pbb G}
+ \ip{\Pbb v}{\Bc[\nabla^\ell,\Bc^{-1}B^0](B^0)^{-1}\Pbb^\perp G} \notag \\
&+\ip{\Pbb^\perp v}{\Bc[\nabla^\ell,\Bc^{-1}B^0](B^0)^{-1}\Pbb G}
+\ip{\Pbb^\perp v}{\Bc[\nabla^\ell,\Bc^{-1}B^0](B^0)^{-1}\Pbb^\perp G} \notag\\
 =& \ip{\Pbb v}{\Bc[\nabla^\ell,\Bc^{-1}B^0](B^0)^{-1}\Pbb G}
+ \ip{\Pbb v}{\Bc[\nabla^\ell,\Bc^{-1}\Pbb B^0\Pbb^\perp](B^0)^{-1}\Pbb^\perp G} \notag \\
&+ \ip{\Pbb v}{\Bc[\nabla^\ell,\Bc^{-1}\Pbb B^0]\Pbb (B^0)^{-1}\Pbb^\perp G} 
+\ip{\Pbb^\perp v}{\Bc[\nabla^\ell,\Bc^{-1}\Pbb^\perp B^0\Pbb](B^0)^{-1}\Pbb G} \notag \\
&+\ip{\Pbb^\perp v}{\Bc[\nabla^\ell,\Bc^{-1}\Pbb^\perp B^0]\Pbb^\perp (B^0)^{-1}\Pbb G}
+\ip{\Pbb^\perp v}{\Bc[\nabla^\ell,\Bc^{-1}B^0](B^0)^{-1}\Pbb^\perp G}. %\label{BcommpropP1}
\end{align*}
From this identity, we see, with the help of the  calculus inequalities from Appendix \ref{calc} and  the properties  \eqref{B0bnd.2}-\eqref{B0bnd.3} and \eqref{B0invbnd.1}-\eqref{B0invbnd.2}
of
$B^0$, that $\ip{v}{\Bc[\nabla^\ell,\Bc^{-1}B^0](B^0)^{-1}G}$ can be estimated by
\begin{align}
\bigl|\ip{v}{\Bc[\nabla^\ell,\Bc^{-1}B^0](B^0)^{-1}G}\bigr| \leq& 
\bigl|\ip{\Pbb v}{\Bc[\nabla^\ell,\Bc^{-1}B^0](B^0)^{-1}\Pbb G}\bigr|+ C(\norm{u}_{H^k})\Bigl[\norm{\Pbb v}_{L^2}\bigl(\norm{\Pbb u}_{H^k}+|t|^{\frac{1}{2}}\bigr)
\norm{\Pbb^\perp G}_{H^{k-1}} \notag \\
&+ \norm{\Pbb^\perp v}_{L^2}\bigl(\norm{\Pbb u}_{H^k}+|t|^{\frac{1}{2}}\bigr)
\norm{\Pbb G}_{H^{k-1}}+\norm{\Pbb^\perp v}_{L^2}\norm{\Pbb^\perp G}_{H^{k-1}}
\Bigr]. \label{BcommpropP1}
\end{align}
Using this estimate and another application of the calculus inequalities, it then follows from the expansion \eqref{fexp} for $F$ and properties \eqref{F2vanish}-\eqref{F2bnd.3}
of the expansions coefficients $F_a$, $a=0,1,2$, that we can bound
 $\ip{v}{\Bc[\nabla^\ell,\Bc^{-1}B^0](B^0)^{-1}F}$  by
\begin{align}
\bigl|\ip{v}{\Bc[\nabla^\ell,\Bc^{-1}B^0](B^0)^{-1}F}\bigr| \leq& C(\norm{u}_{H^k})\Bigl[
\norm{v}_{L^2}\bigl(\norm{\Ft}_{H^k}+\norm{u}_{H^k}\bigr) \notag \\
&+ |t|^{-\frac{1}{2}}\bigl(\norm{v}_{L^2}\norm{\Pbb u}_{H^k}
+\norm{\Pbb v}_{L^2}\norm{u}_{H^k}\bigr)+|t|^{-1}\norm{v}_{L^2}\norm{\Pbb u}^2_{H^k}\Bigr]. \label{BcommpropP2}
\end{align}
We also observe, with the help of $[\Bc,\Pbb]=0$ and $\nabla\Pbb =0$, that 
\begin{align*}
\ip{v}{\Bc \nabla^\ell (\Bc^{-1}F)} =& \ip{\Pbb v}{\Bc \nabla^\ell (\Bc^{-1}\Pbb F)} + \ip{\Pbb^\perp v}{\Bc \nabla^\ell (\Bc^{-1}\Pbb^\perp F)}.
\end{align*}
Using this in conjunction with the calculus inequalities, the expansion \eqref{fexp} for $F$, and the properties \eqref{F2vanish}-\eqref{F2bnd.3}
of the expansions coefficients $F_a$, $a=0,1,2$, we then get
\begin{align}
\bigl|\ip{v}{\Bc \nabla^\ell (\Bc^{-1}F)}\bigr| \leq& C(\norm{u}_{H^k})\Bigl[
\norm{v}_{L^2}\bigl(\norm{\Ft}_{H^k}+\norm{u}_{H^k}\bigr) \notag \\
&+ |t|^{-\frac{1}{2}}\bigl(\norm{v}_{L^2}\norm{\Pbb u}_{H^k}
+\norm{\Pbb v}_{L^2}\norm{u}_{H^k}\bigr)+|t|^{-1}\norm{v}_{L^2}\norm{\Pbb u}^2_{H^k}\Bigr]. \label{BcommpropP3}
\end{align}

Appealing again to the calculus inequalities, we find from \eqref{B0bnd.1}, \eqref{Bcbnd}, 
%\eqref{Bexp}, \eqref{BI2bnd.1} and \eqref{BI2bnd.3}
\eqref{Bexp} - \eqref{BI2bnd.3}
that the term
$\bigl|\ip{\Pbb v}{\Bc[\nabla^\ell,\Bc^{-1}B^0](B^0)^{-1}\Pbb B^i\nabla_{i}u}\bigr|$
%$\bigl|\ip{\Pbb v}{\Bc[\nabla^\ell,\Bc^{-1}B^0](B^0)^{-1}\Pbb B^i\Pbb \nabla_{i}u}\bigr|$
can be estimated by
\begin{align}
\bigl|\ip{\Pbb v}{\Bc[\nabla^\ell,\Bc^{-1}B^0]&(B^0)^{-1}\Pbb B^i \nabla_i u}\bigr| \leq
\bigl|\ip{\Pbb v}{\Bc[\nabla^\ell,\Bc^{-1}B^0](B^0)^{-1}\Pbb B^i\Pbb \nabla_i \Pbb u}\bigr|  \notag \\
& + \bigl|\ip{\Pbb v}{\Bc[\nabla^\ell,\Bc^{-1}B^0](B^0)^{-1}\Pbb B^i \Pbb^\perp 
\nabla_i u}\bigr|
\notag \\
\leq &|t|^{-1} \bigl|\ip{\Pbb v}{\tilde{\Bc}[\nabla^\ell,\tilde{\Bc}^{-1}\tilde{B}^0](\tilde{B}^0)^{-1}\Pbb \tilde{B}_2^i \nabla_i \Pbb u}\bigr| +C(\norm{u}_{H^k})\Bigl[
\norm{v}_{L^2}\norm{u}_{H^k} \notag \\
&  +
|t|^{-\frac{1}{2}}\norm{\Pbb v}_{L^2}\norm{u}_{H^k} +  |t|^{-1}
\norm{\Pbb v}_{L^2}\norm{u}_{H^k}\norm{\Pbb u}_{H^k}\Bigl] \notag \\
\leq&  |t|^{-1} \ell \big\|\big|\Pbb\tilde{\Bc}\nabla(\tilde{\Bc}^{-1}\tilde{B}^0)(\tilde{B}^0)^{-1}\Pbb \tilde{B}_2\Pbb\bigl|_{\op}\bigr\|_{L^\infty}
\norm{\Pbb v}_{L^2}\norm{\Pbb u}_{H^k} +C(\norm{u}_{H^k})\Bigl[ 
\norm{v}_{L^2}\norm{u}_{H^k}\notag \\
&  +
|t|^{-\frac{1}{2}}\norm{\Pbb v}_{L^2}\norm{u}_{H^k} +  |t|^{-1}\bigl( \norm{\Pbb v}_{L^2}\norm{\Pbb u}_{H^{k-1}}+
\norm{\Pbb v}_{L^2}\norm{u}_{H^k}\norm{\Pbb u}_{H^k}\bigr)\Bigl],\notag 
\end{align}
where in deriving the last inequality we used Lemma \ref{commlem}. Using this, the inequality
\begin{align}
\bigl|\ip{v}{\Bc[\nabla^\ell,\Bc^{-1}B^0](B^0)^{-1}B^i &\nabla_i u}\bigr| \leq  |t|^{-1} \ell \big\|\big|\Pbb\tilde{\Bc}\nabla(\tilde{\Bc}^{-1}\tilde{B}^0)(\tilde{B}^0)^{-1}\Pbb \tilde{B}_2\Pbb\bigl|_{\op}\bigr\|_{L^\infty}
\norm{\Pbb v}_{L^2}\norm{\Pbb u}_{H^k} \notag \\
&+C(\norm{u}_{H^k})\Bigl[ 
\norm{v}_{L^2}\norm{u}_{H^k} +
|t|^{-\frac{1}{2}}\bigl(\norm{\Pbb v}_{L^2}\norm{u}_{H^k}+\norm{v}_{L^2}\norm{\Pbb u}_{H^k}\bigr) \notag\\
&+  |t|^{-1}\bigl( \norm{\Pbb v}_{L^2}\norm{\Pbb u}_{H^{k-1}}+\norm{v}_{L^2}\norm{\Pbb u}^2_{H^k}+
\norm{\Pbb v}_{L^2}\norm{u}_{H^k}\norm{\Pbb u}_{H^k}\bigr)\Bigl] \label{BcommpropP4}
\end{align}
is then easily seen to be a direct consequence of the expansion \eqref{Bexp}, the properties \eqref{BI1bnd.4}-\eqref{BI2bnd.4} of $B_1$ and $B_2$, and the estimate \eqref{BcommpropP1}.
By similar arguments as above, it is also not difficult to verify the following estimates hold:
\begin{equation}
\bigl|\ip{v}{\Bc[\nabla^\ell,\Bc^{-1} B^0](B^0)^{-1} \Bc\Pbb u}\bigr| \leq C(\norm{u}_{H^k})\Bigl[\norm{\Pbb v}_{L^2}\norm{\Pbb u}_{H^{k-1}}+ |t|^{\frac{1}{2}} \norm{v}_{L^2}\norm{\Pbb u}_{H^{k}}
+\norm{v}_{L^2}\norm{\Pbb u}_{H^k}^2\Bigr] \label{BcommpropP5}
\end{equation}
and
\begin{align}
%\bigl|\ip{v}{\Bc[\nabla^\ell,\Bc^{-1}B^i]\nabla_i\Pbb u}\bigr|
\bigl|\ip{v}{\Bc[\nabla^\ell,\Bc^{-1}B^i]\nabla_i u}\bigr|
\leq&  |t|^{-1} \ell \big\|\big|\Pbb\tilde{\Bc}\nabla(\tilde{\Bc}^{-1}\tilde{B}_2)\Pbb\bigl|_{\op}\bigr\|_{L^\infty}
\norm{\Pbb v}_{L^2}\norm{\Pbb u}_{H^k} \notag \\
&+C(\norm{u}_{H^k})\Bigl[ 
\norm{v}_{L^2}\norm{u}_{H^k} +
|t|^{-\frac{1}{2}}\bigl(\norm{\Pbb v}_{L^2}\norm{u}_{H^k}+\norm{v}_{L^2}\norm{\Pbb u}_{H^k}\bigr) \notag\\
&+  |t|^{-1}\bigl( \norm{\Pbb v}_{L^2}\norm{\Pbb u}_{H^{k-1}}+\norm{v}_{L^2}\norm{\Pbb u}^2_{H^k}+
\norm{\Pbb v}_{L^2}\norm{u}_{H^k}\norm{\Pbb u}_{H^k}\bigr)\Bigl] \label{BcommpropP6}.
\end{align}

To complete the proof, we note, since $\nabla \Pbb =0$ by assumption, that $[\nabla^\ell,\nabla_i]$ is a differential operator of order $\ell-1$ that commutes with $\Pbb$. This fact allows us
to write  $\ip{v}{B^i [\nabla^\ell,\nabla_i]u}$ as
\begin{align*}
\ip{v}{B^i [\nabla^\ell,\nabla_i]u} = &\ip{\Pbb v}{\Pbb B^i \Pbb [\nabla^\ell,\nabla_i]\Pbb u} + \ip{\Pbb v}{\Pbb B^i \Pbb^\perp [\nabla^\ell,\nabla_i]\Pbb^\perp u} \\
&
+\ip{\Pbb^\perp v}{\Pbb^\perp B^i \Pbb [\nabla^\ell,\nabla_i]\Pbb u}+\ip{\Pbb^\perp v}{\Pbb^\perp B^i \Pbb^\perp [\nabla^\ell,\nabla_i]\Pbb^\perp u}.
\end{align*}
This identity together with the expansion \eqref{Bexp} for $B^i$ and the properties \eqref{BI1bnd.4}-\eqref{BI2bnd.3} of the expansion coefficients $B_1$ and $B_2$ shows,
with the help of the calculus inequalities, that $\ip{v}{B^i [\nabla^\ell,\nabla_i]u}$ can be estimated by
\begin{align}
|\ip{v}{B^i [\nabla^\ell,\nabla_i]u}|\leq &C(\norm{u}_{H^k})\Bigl[ \norm{v}_{L^2}\norm{u}_{H^k}+|t|^{-\frac{1}{2}}\Bigl(\norm{v}_{L^2}\norm{\Pbb u}_{H^k}
+\norm{\Pbb v}_{L^2}\norm{u}_{H^k}\Bigr)\notag \\
&+|t|^{-1}\Bigl(\norm{\Pbb v}_{L^2}\norm{\Pbb u}_{H^{k-1}}+\norm{v}_{L^2}\norm{\Pbb u}^2_{H^k} + 
\norm{\Pbb v}_{L^2}\norm{u}_{H^k}\norm{\Pbb u}_{H^k}\Bigl)\Bigr]. \label{BcommpropP7}
 \end{align}
The stated estimates now follow directly from the inequalities \eqref{BcommpropP2}-\eqref{BcommpropP7}.
\end{proof}

Similar arguments can be used to establish the following variation of the above proposition.

\begin{prop} \label{BcommpropA}
Suppose  $k\in \Zbb_{>n/2+3}$, $1\leq \ell \leq k-1$, $v\in L^2(V\otimes T_\ell^0(\Sigma) )$, $u \in  B_{C_{\text{Sob}}^{-1}R}\bigl(H^{k}(V)\bigr)$,
$\Bc=\Bc(t,u(x))$, $B^0=B^0(t,u(x))$ and $B=B(t,u(x))$. Then
\begin{gather*}
|\ip{v}{\Bc\Pbb [\nabla^\ell,\Bc^{-1}\Pbb B^0 \Pbb]\Pbb (B^{0})^{-1}F}| \leq C\bigl(\norm{v}_{L^2}
\norm{\Ft}_{H^k} + \Theta\bigl), \\
|\ip{v}{\Bc\Pbb[\nabla^\ell,\Bc^{-1}\Pbb B^0\Pbb]\Pbb(B^{0})^{-1} t^{-1}\Bc \Pbb u}| \leq C\bigl(|t|^{-1}\norm{\Pbb v}_{L^2}\norm{\Pbb u}_{H^{k-2}}
+\Theta\bigr),
\end{gather*}
\begin{align*}
|\ip{v}{\Bc\Pbb [\nabla^\ell,\Bc^{-1}\Pbb B^i\Pbb]\nabla_{i}\Pbb u}| + |\ip{v}{\Bc \Pbb[\nabla^\ell,\Bc^{-1}\Pbb B^0 \Pbb]\Pbb(B^{0})^{-1}B^i\nabla_{i} u}|
\leq& |t|^{-1}\Bigl(\ell\tilde{\mathtt{b}}\norm{\Pbb v}_{L^2}\norm{\Pbb u}_{H^{k-1}} \\
&+ C\norm{\Pbb v}_{L^2}\norm{\Pbb u}_{H^{k-2}}\Bigr)
+ C\Theta,
\end{align*}
and
\begin{equation*}
|\ip{v}{\Pbb B^i \Pbb [\nabla^\ell,\nabla_i]\Pbb u}| \leq C\bigl(|t|^{-1}\norm{\Pbb v}_{L^2}\norm{\Pbb u}_{H^{k-2}}+ \Theta \bigr),
\end{equation*}
where $C=C(\norm{u}_{H^k})$,
\begin{align*}
\Theta = \norm{\Pbb v}_{L^2}\norm{u}_{H^{k-1}}+ |t|^{-\frac{1}{2}}\Bigl((\lambda_1+\alpha)\norm{\Pbb v}_{L^2}\norm{u}_{H^{k-1}}
+\norm{\Pbb v}_{L^2}\norm{\Pbb u}_{H^{k-1}}\Bigr)+|t|^{-1}\norm{\Pbb v}_{L^2}\norm{u}_{H^{k-1}}\norm{\Pbb u}_{H^{k-1}}
\end{align*}
and
\begin{align*}
\tilde{\mathtt{b}} =  \sup_{T_0 \leq t < 0}\Bigl( \big\|\big|\Pbb\tilde{\Bc}(t)\nabla(\tilde{\Bc}(t)^{-1}\Pbb \tilde{B}^0(t) \Pbb)\Pbb \tilde{B}^0(t)^{-1} \tilde{B}_2(t)\Pbb\bigl|_{\op}\bigr\|_{L^\infty}
+ \big\|\big|\Pbb\tilde{\Bc}(t)\nabla(\tilde{\Bc}(t)^{-1}\tilde{B}_2(t))\Pbb\bigl|_{\op}\bigr\|_{L^\infty} \Bigr).
\end{align*}
\end{prop}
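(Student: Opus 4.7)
The plan is to mimic the structure of the proof of Proposition \ref{Bcommprop}, but exploit the additional projections $\Pbb$ sandwiching $B^0$ and $B^i$ to replace $\norm{u}_{H^k}$ factors by $\norm{u}_{H^{k-1}}$ factors throughout. Because we are now asking for estimates in terms of one less derivative on $u$, the Sobolev and Moser embedding arguments demand one more derivative of ambient regularity, explaining the shift $k>n/2+3$ and $\ell \leq k-1$. I would begin by splitting each of the four inner products using $\id = \Pbb + \Pbb^\perp$ inserted judiciously between the operator factors, exactly as in the derivation of \eqref{BcommpropP1}. The key observation is that after projecting with $\Pbb$ on both sides, the operator $\Pbb B^0 \Pbb$ (and similarly $\Pbb B^i \Pbb$) appears sandwiched, so all the cross-terms $\Pbb B^0 \Pbb^\perp$ and $\Pbb^\perp B^0 \Pbb$ that produced the $\Ord(|t|^{1/2}+\Pbb v)$ contributions in the proof of Proposition~\ref{Bcommprop} drop out, which is exactly what allows the $\Pbb v$ factor to appear in front of all the critical singular terms in $\Theta$.

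For the first two inequalities, I would expand the commutator $[\nabla^\ell,\Bc^{-1}\Pbb B^0\Pbb]$ via the product rule and estimate using the Moser and product calculus inequalities from Appendix \ref{calc}; then plug in the expansion \eqref{fexp} of $F$ together with \eqref{F2vanish}--\eqref{F2bnd.3} for the first estimate, and use \eqref{BcPbbcom} together with \eqref{B0bnd.1}, \eqref{Bcbnd}, \eqref{BctPbbcom} for the second. The reason $\norm{\Pbb u}_{H^{k-2}}$ appears in the second estimate (rather than $\norm{\Pbb u}_{H^{k-1}}$ in Proposition~\ref{Bcommprop}) is that applying Lemma \ref{commlem} with the reduced range $\ell \le k-1$ forces the lower-order commutator remainder onto $H^{k-2}$.

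For the third inequality, the central step is to isolate the leading-order commutator contribution using Lemma \ref{commlem} applied with $\Asc = \Bc^{-1}\Pbb B^0\Pbb$ (respectively $\Asc=\Bc^{-1}\Pbb B^i\Pbb$), $\Bsc = \Bc$ and $\Csc = \Pbb B^0^{-1}B^i\Pbb$ (respectively the identity sandwich). The coefficient of the $\ell \norm{\Pbb v}_{L^2}\norm{\Pbb u}_{H^{k-1}}$ term comes from the $L^\infty$ bound $\norm{|\Bsc\nabla \Asc \Csc|_{\op}}_{L^\infty}$ of Lemma \ref{commlem}, which after taking the supremum over $t \in [T_0,0)$ and using the approximations $B^0 \approx \tilde B^0$, $\Bc\approx\tilde{\Bc}$, $B_2\approx \tilde B_2$ from \eqref{B0bnd.1}, \eqref{Bcbnd}, \eqref{BI2bnd.3} (with the corrections absorbed into $C\Theta$) gives precisely the constant $\tilde{\mathtt{b}}$. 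The expansion \eqref{Bexp} of $B^i$ into $B_0^i+|t|^{-1/2}B_1^i+|t|^{-1}B_2^i$ together with the decay properties \eqref{BI1bnd.1}--\eqref{BI2bnd.4} ensures that only the $|t|^{-1}$ contribution produces the leading-order $\tilde{\mathtt{b}}$ term, while the $|t|^{-1/2}$ contribution — thanks to \eqref{BI1bnd.2}, \eqref{BI1bnd.3} — produces the $\alpha$-prefactor inside $\Theta$.

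For the fourth inequality, I would split $\ip{v}{\Pbb B^i \Pbb[\nabla^\ell,\nabla_i]\Pbb u}$ using $\nabla\Pbb = 0$ so that $[\nabla^\ell,\nabla_i]$ is a differential operator of order $\ell-1 \le k-2$ commuting with $\Pbb$, and argue as in \eqref{BcommpropP7}, noting that now both slots carry a $\Pbb$, so only the $\Pbb B^i \Pbb$ terms contribute and the singular piece is controlled via \eqref{BI2bnd.3} alone, producing $\norm{\Pbb u}_{H^{k-2}}$ at the top order. The main obstacle throughout will be careful bookkeeping of which singular factor $|t|^{-1/2}$ or $|t|^{-1}$ multiplies which Sobolev norm, particularly to verify that the constants $\lambda_1$, $\alpha$ appearing in $\Theta$ arise only where the decay assumptions \eqref{F1bnd.1}, \eqref{BI1bnd.2}, \eqref{BI1bnd.3} genuinely demand them, and not in the strict leading-order coefficient $\tilde{\mathtt{b}}$.
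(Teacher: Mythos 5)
Your proposal is correct and follows essentially the route the paper intends: the paper gives no separate argument for Proposition \ref{BcommpropA}, stating only that it follows by the same arguments as Proposition \ref{Bcommprop}, and your plan — exploiting the $\Pbb$-sandwiching (together with $[\Bc,\Pbb]=0$, $\nabla\Pbb=0$) to force $\Pbb v$ pairings and kill the cross terms, applying Lemma \ref{commlem} one derivative lower (hence $\ell\le k-1$, $k>n/2+3$, and the $H^{k-2}$ remainders) to extract the $\ell\tilde{\mathtt{b}}$ leading term, and tracking $\lambda_1$, $\alpha$ through \eqref{F1bnd.1}, \eqref{BI1bnd.2}--\eqref{BI1bnd.3} — is exactly that adaptation. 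The only blemishes are cosmetic (e.g.\ writing $\Pbb v$ where the structural bounds \eqref{B0bnd.2}--\eqref{B0bnd.3} give $\Ord(|t|^{1/2}+\Pbb u)$, and imprecise identification of $\Csc$), not mathematical gaps.
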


\subsection{Global existence and asymptotics\label{global}}
The main result of this article is the following
theorem that guarantees the existence of solutions
to the IVP \eqref{symivp.1}-\eqref{symivp.2} on time intervals of the form $[T_0,0)$ under a suitable small initial data hypothesis. The theorem also yields decay estimates for the solutions; see Remark \ref{Gdecay} below for additional information on improvements to the decay rate for $\Pbb^\perp u$. Applications of this theorem can be found in Section \ref{applications}. 

\begin{thm} \label{symthm}
Suppose $k \in \Zbb_{>n/2+3}$, $\sigma>0$, $u_0\in H^k(\Sigma)$, assumptions (i)-(v) from Section \ref{coeffassump} are fulfilled,
and the constants $\kappa$, $\gamma_1$, $\lambda_3$, $\beta_0$, $\beta_1$, $\beta_3$, $\beta_5$, $\beta_7$ from Section \ref{coeffassump} satisfy
\begin{equation}
\label{eq:symhypkappa}
\kappa > \frac{1}{2}\gamma_1\max\biggl\{\sum_{a=0}^3  \beta_{2a+1}+2\lambda_3,\beta_1+ 2k(k+1)\mathtt{b}\biggr\}
\end{equation}
where
\begin{align*}
\mathtt{b} =  \sup_{T_0 \leq t < 0}\Bigl( \big\|\big|\Pbb\tilde{\Bc}(t)\nabla(\tilde{\Bc}(t)^{-1}\tilde{B}^0(t))\tilde{B}^0(t)^{-1}\Pbb \tilde{B}_2(t)\Pbb\bigl|_{\op}\bigr\|_{L^\infty}
+ \big\|\big|\Pbb\tilde{\Bc}(t)\nabla(\tilde{\Bc}(t)^{-1}\tilde{B}_2(t))\Pbb\bigl|_{\op}\bigr\|_{L^\infty} \Bigr).
\end{align*}
Then there exists
a $\delta > 0$ such that if 
\begin{equation*}
 \max\Bigr\{\norm{u_0}_{H^k},\sup_{T_0 \leq \tau < 0}\norm{\Ft(\tau)}_{H^k}\Bigr\}
 \leq \delta,
\end{equation*}
then there exists a unique solution 
\begin{equation*}
%\label{eq:symhypreg}
u \in C^0\bigl([T_0,0),H^k(\Sigma)\bigr)\cap L^\infty\bigl([T_0,0),H^k(\Sigma)\bigr)\cap C^1\bigl([T_0,0),H^{k-1}(\Sigma)\bigr)
\end{equation*}
of the IVP \eqref{symivp.1}-\eqref{symivp.2} with $T_1=0$ such that the limit $\lim_{t\nearrow 0} \Pbb^\perp u(t)$, denoted $\Pbb^\perp u(0)$, exists in $H^{k-1}(\Sigma)$.

\medskip

\noindent Moreover, for $T_0 \leq t < 0$,  the solution $u$ satisfies  the energy estimate
\begin{equation*}
\norm{u(t)}_{H^k}^2+ \sup_{T_0 \leq \tau < 0}\norm{\Ft(\tau)}_{H^k}^2- \int_{T_0}^t \frac{1}{\tau} \norm{\Pbb u(\tau)}_{H^k}^2\, d\tau   \leq C(\delta,\delta^{-1})\Bigl( \norm{u_0}_{H^k}^2 + \sup_{T_0 \leq \tau < 0}\norm{\Ft(\tau)}_{H^k}^2\Bigr)
\end{equation*}
and the decay estimates
\begin{align*}
%\label{eq:asymptotics1}
\norm{\Pbb u(t)}_{H^{k-1}} &\lesssim \begin{cases}
|t|+(\lambda_1+\alpha)|t|^{\frac{1}{2}} & \text{if $\zeta > 1$} \\
|t|^{\zeta-\sigma}+(\lambda_1+\alpha)|t|^{\frac{1}{2}} & \text{if $\frac{1}{2} < \zeta \leq 1$} \\
|t|^{\zeta-\sigma}   & \text{if $0 < \zeta \leq \frac{1}{2}$}\end{cases}
\intertext{and}
%\label{eq:asymptotics2}
\norm{\Pbb^\perp u(t) - \Pbb^\perp u(0)}_{H^{k-1}} &\lesssim
 \begin{cases}  |t|^{\frac{1}{2}}+ |t|^{\zeta-\sigma}
& \text{if $\zeta > \frac{1}{2}$} \\
 |t|^{\zeta-\sigma}  &  \text{if $\zeta \leq \frac{1}{2} $ }
 \end{cases},
\end{align*}
where\footnote{Recall from Remark \ref{kappatrem} that $\kappat \geq \kappa >0$ and  $\gamma_1\geq \gammat_1 >0$, while it is clear from the definitions of
$\mathtt{b}$ and $\tilde{\mathtt{b}}$ that
$\mathtt{b} \geq \tilde{\mathtt{b}} \geq 0$. Thus,  $\zeta = \kappat -\frac{1}{2}\gammat_1\bigl(\beta_1+ (k-1)k\tilde{\mathtt{b}}\bigr) >0$ since
 $\kappa - \frac{1}{2}\gamma_1\max\bigl\{\sum_{a=0}^3  \beta_{2a+1}+2\lambda_3,\beta_1+ 2k(k+1)\mathtt{b}\bigr\}>0$ by assumption.}
\begin{equation}
\label{eq:defzeta}
\zeta = \kappat -\frac{1}{2}\gammat_1\bigl(\beta_1+ (k-1)k\tilde{\mathtt{b}}\bigr)
\end{equation}
and
\begin{equation*}
\tilde{\mathtt{b}} =  \sup_{T_0 \leq t < 0}\Bigl( \big\|\big|\Pbb\tilde{\Bc}(t)\nabla(\tilde{\Bc}(t)^{-1}\Pbb \tilde{B}^0(t) \Pbb)\Pbb \tilde{B}^0(t)^{-1} \tilde{B}_2(t)\Pbb\bigl|_{\op}\bigr\|_{L^\infty}
+ \big\|\big|\Pbb\tilde{\Bc}(t)\nabla(\tilde{\Bc}(t)^{-1}\tilde{B}_2(t))\Pbb\bigl|_{\op}\bigr\|_{L^\infty} \Bigr).
\end{equation*}
\end{thm}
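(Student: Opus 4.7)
The plan is to combine a standard local-existence/continuation scheme with a weighted higher-order energy estimate tuned to the projection structure, and then to read off the decay rates by integrating projected versions of the equation. Because the coefficients of \eqref{symivp.1} are smooth on $[T_0,0)\times B_R(V)$ and, as noted after assumption (iv), the system is a genuine symmetric hyperbolic system on any compact subinterval of $[T_0,0)$, standard quasilinear theory yields a unique maximal solution $u\in C^0([T_0,T_\ast),H^k(V))\cap C^1([T_0,T_\ast),H^{k-1}(V))$ for some $T_\ast\in(T_0,0]$, together with the continuation criterion that $T_\ast<0$ forces $\|u(t)\|_{H^k}\to\infty$ or $u(t)$ to leave $B_{C_{\text{Sob}}^{-1}R}(H^k)$. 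The task is therefore to produce an a priori energy estimate preventing both. I introduce the higher-order energy
\begin{equation*}
\mathcal{E}_k(t) = \sum_{\ell=0}^{k}\bigl\langle \Bc(t,u)\nabla^\ell u,\, (B^0)^{-1}(t,u)\,\Bc(t,u)\nabla^\ell u\bigr\rangle,
\end{equation*}
which by \eqref{B0BCbnd} and Remark \ref{kappatrem} is uniformly equivalent to $\|u(t)\|_{H^k}^2$ on the ball. The conjugation by $\Bc(B^0)^{-1}\Bc$ is designed so that applying $\nabla^\ell$ to \eqref{symivp.1}, pairing with $\Bc\nabla^\ell u$, and integrating by parts in the $B^i\nabla_i u$ term produces, via \eqref{divBid}, the divergence operator $\Div\! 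B$ of assumption (v), while the singular source $t^{-1}\Bc\Pbb u$ yields a sign-definite $t^{-1}$-contribution of the form $\langle \nabla^\ell u,\,\Bc^2(B^0)^{-1}\Pbb\nabla^\ell u\rangle$.

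\medskip

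Commuting $\nabla^\ell$ past the various matrix factors and applying Propositions \ref{Pperpprop}, \ref{FdivBprop}, \ref{Bcommprop}, and \ref{BcommpropA} term-by-term---using the block bounds \eqref{B0bnd.2}--\eqref{B0bnd.3}, \eqref{BI1bnd.1}--\eqref{BI2bnd.3} to ensure that every $|t|^{-1}$ is paired with at least one $\Pbb u$---one arrives at a differential inequality of schematic form
\begin{equation*}
\frac{d}{dt}\mathcal{E}_k(t) \leq \frac{2\kappat}{t}\|\Pbb u(t)\|_{H^k}^2 + \frac{\gammat_1\, M}{|t|}\|\Pbb u(t)\|_{H^k}^2 + C\bigl(\|\Ft(t)\|_{H^k}^2+\mathcal{E}_k(t)+\mathcal{E}_k(t)^{3/2}\bigr),
\end{equation*}
with $M=\max\{\sum_{a=0}^{3}\beta_{2a+1}+2\lambda_3,\,\beta_1+2k(k+1)\mathtt{b}\}$; the factor $k(k+1)$ reflects summation of the sharp linear-in-$\ell$ prefactor of Lemma \ref{commlem} over $1\leq\ell\leq k$. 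Since $t<0$, the first term is damping, and hypothesis \eqref{eq:symhypkappa}---precisely $\kappa>\tfrac{1}{2}\gamma_1 M$---is the quantitative statement that this damping strictly dominates the positive singular contribution after using $\kappa B^0\leq\Bc$. Thus some $\eta>0$ verifies
\begin{equation*}
\frac{d}{dt}\mathcal{E}_k(t)+\frac{2\eta}{|t|}\|\Pbb u(t)\|_{H^k}^2 \leq C\bigl(\|\Ft(t)\|_{H^k}^2+\mathcal{E}_k(t)+\mathcal{E}_k(t)^{3/2}\bigr),
\end{equation*}
and a continuous-induction/Gronwall argument then shows that small $\|u_0\|_{H^k}+\sup_\tau\|\Ft(\tau)\|_{H^k}$ keeps $\mathcal{E}_k$ small and $u$ inside the ball, so $T_\ast=0$ and the stated energy estimate on $[T_0,0)$ follows.

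\medskip

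For decay of $\Pbb u$, I repeat the argument at order $k-1$ on $\Pbb u$ alone: only the $\Pbb$-block coefficients now enter, the sharper Proposition \ref{BcommpropA} (with constant $\tilde{\mathtt{b}}$) replaces the $\mathtt{b}$-term from Proposition \ref{Bcommprop}, and the effective damping becomes exactly the $\zeta$ of \eqref{eq:defzeta}. Explicit integration of the resulting linear Gronwall inequality against a source of size $|t|^2+(\lambda_1+\alpha)^2|t|$ (produced by the nonsingular parts of $F$ and by the $|t|^{-1/2}\Pbb B_1\Pbb^\perp$ contribution routed through $\Pbb^\perp u$) yields the case split $\zeta>1$, $\tfrac{1}{2}<\zeta\leq 1$, $0<\zeta\leq\tfrac{1}{2}$, with the freedom to lose an arbitrary $\sigma>0$ at critical exponents. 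For $\Pbb^\perp u$, apply $\Pbb^\perp(B^0)^{-1}$ to \eqref{symivp.1} and use the off-diagonal bounds \eqref{B0invbnd.1}--\eqref{B0invbnd.2} to write $\partial_t\Pbb^\perp u = \Pbb^\perp G$ with $G$ as in Proposition \ref{Pperpprop}; the $H^{k-1}$-estimate there, combined with the decay of $\Pbb u$ just established, makes $\|\partial_t\Pbb^\perp u(t)\|_{H^{k-1}}$ integrable up to $t=0$. Hence $\Pbb^\perp u(t)$ is Cauchy in $H^{k-1}$, $\Pbb^\perp u(0):=\lim_{t\nearrow 0}\Pbb^\perp u(t)$ exists in $H^{k-1}$, and integrating $\|\partial_t\Pbb^\perp u\|_{H^{k-1}}$ from $t$ to $0$ delivers the stated rate for $\|\Pbb^\perp u(t)-\Pbb^\perp u(0)\|_{H^{k-1}}$.

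\medskip

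The principal obstacle is the bookkeeping in the second paragraph: one must certify that no singular contribution to $\dot{\mathcal{E}}_k$ escapes the sharp coefficient $\tfrac{1}{2}\gamma_1(\beta_1+2k(k+1)\mathtt{b})$. Any $H^k$--$H^{k-1}$ mismatch in a commutator bound, or a missed off-diagonal $\Pbb$--$\Pbb^\perp$ block, would either destroy the strict inequality \eqref{eq:symhypkappa} or leave an uncontrollable $|t|^{-1}\|\Pbb u\|_{H^k}^2$ on the right-hand side. This is precisely why Propositions \ref{Pperpprop}--\ref{BcommpropA} separate all estimates across the four $\Pbb,\Pbb^\perp$ blocks and why Lemma \ref{commlem} retains its sharp linear-in-$\ell$ prefactor.
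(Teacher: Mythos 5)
Your overall route is the paper's own: local existence plus a continuation criterion, $\Bc$-conjugated higher-order energy estimates in which assumption (v) on $\Div B$ and the structural condition \eqref{eq:symhypkappa} convert the $t^{-1}$ terms into damping (with the $k(k+1)\mathtt{b}$ factor coming from summing the linear-in-$\ell$ prefactor of Lemma \ref{commlem}), a bootstrap giving global existence and the energy estimate, a separate lower-order estimate for $\Pbb u$ based on Proposition \ref{BcommpropA} (hence $\tilde{\mathtt{b}}$ and $\zeta$), and integration of $\del{t}\Pbb^\perp u$ for the limit and its decay. Two steps, however, do not work as written. First, the pairing you propose for the top-order energy is not symmetric-hyperbolic compatible: pairing the $\nabla^\ell$-differentiated equation with $\Bc\nabla^\ell u$ (equivalently, using the weight $\Bc (B^0)^{-1}\Bc$) requires $\Bc B^i$ and $\Bc B^0$ to be symmetric in order to integrate by parts to $\Div B$ and to recover a total time derivative, and Section \ref{coeffassump} gives no such symmetry — $\Bc$ is not even assumed symmetric; only $[\Pbb,\Bc]=0$ is available. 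The paper instead conjugates the equation itself, applying $\Bc\nabla^\ell\Bc^{-1}$ to \eqref{symivp.1} to obtain \eqref{symthm3.1}, so the principal coefficients in the pairing remain the symmetric $B^0$, $B^i$ and the plain energy $\ip{\nabla^\ell u}{B^0\nabla^\ell u}$ can be used; the cost is precisely the commutator terms that Propositions \ref{Bcommprop} and \ref{BcommpropA} are built to absorb.

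Second, your decay argument for $\Pbb u$ is circular as ordered. The $\Pbb$-projected equation \eqref{Pbbueqn} is not closed in the ``$\Pbb$-block'' coefficients: its source \eqref{Fcdef} contains $-\Pbb B^0\Pbb^\perp\del{t}\Pbb^\perp u$ (and $-\Pbb B^i\Pbb^\perp\nabla_i u$), so running the Gronwall argument for $\norm{\Pbb u}$ already requires the time-integrability of $\norm{\del{t}\Pbb^\perp u}_{H^{k-1}}$ and a pointwise bound on $\norm{|t|^{1/2}\del{t}\Pbb^\perp u}_{L^2}$ — the paper's \eqref{PbbperpestA}--\eqref{PbbperpestB} — as \emph{inputs}. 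You instead deduce the integrability of $\del{t}\Pbb^\perp u$ \emph{after} the decay of $\Pbb u$, from that decay; with the dependence running both ways, neither estimate gets off the ground. The repair uses only tools you already invoke and is what the paper does: Proposition \ref{Pperpprop} combined with the spacetime integral $-\int_{T_0}^t\frac{1}{\tau}\norm{\Pbb u(\tau)}_{H^k}^2\,d\tau$ controlled by the energy estimate \eqref{energy} (via Young/Cauchy--Schwarz) yields \eqref{PbbperpestA}--\eqref{PbbperpestB} and the existence of $\Pbb^\perp u(0)$ \emph{before}, and independently of, any decay of $\Pbb u$; the decay estimates for $\Pbb u$ and then for $\Pbb^\perp u - \Pbb^\perp u(0)$ follow afterwards, as in the paper.
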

\begin{proof}
Since $k\in \Zbb_{>n/2+3}$, we know by standard local-in-time existence and uniqueness results for symmetric hyperbolic equations, e.g.
\cite[Ch.16 \S 1]{TaylorIII:1996}, that there exists a solution $u \in C^0([T_0,T^*),H^k)\cap C^1([T_0,T^*),H^{k-1})$ to
\eqref{symivp.1}-\eqref{symivp.2} for some time $T^*\in (T_0,0]$ that we can take to be maximal. Letting $R>0$ be as in the assumptions from Section \ref{coeffassump}, we choose
\begin{equation*}
\delta \in (0,\Quarter\Rc), \quad \Rc = \min\bigl\{\frac{3 R}{4 C_{\text{Sob}}}, \frac{3 R}{4}\bigr\} ,
\end{equation*}
and
initial data such that 
\begin{equation*}
\norm{u(T_0)}_{H^k} < \delta.
\end{equation*} 
Then either $\norm{u(t)}_{H^{k}} < \Rc$ for all $t\in [T_0,T^*)$ or there exists a first
time $T_*\in (T_0,T^*)$ such that 
\begin{equation*}
\norm{u(T_*)}_{H^{k}} =  \Rc \leq \begin{textstyle} \frac{3}{4} \end{textstyle} R.
\end{equation*}  
Setting $T_*=T^*$ if the first case holds, then, by Sobolev's inequality, we have
that 
\begin{equation} \label{Linfty}
\max\bigl\{\norm{\nabla u(t)}_{L^\infty},\norm{u(t)}_{L^\infty}, \norm{u(t)}_{H^k} \bigr\} %\leq C_{\text{Sob}}\norm{u(t)}_{H^{k}} 
\leq \begin{textstyle} \frac{3}{4} \end{textstyle} R, \quad T_0\leq t < T_*.
\end{equation}

Applying
the operator $\Bc \nabla^\ell \Bc^{-1}$, $0\leq \ell \leq k$, to
\eqref{symivp.1} on the left yields
\begin{equation*}
B^0\del{t} \nabla^\ell u + B^i\nabla_i \nabla^\ell u = \frac{1}{t}\Bc \nabla^\ell \Pbb u - \Bc [\nabla^\ell,\Bc^{-1} B^0]\del{t}u
-\Bc[\nabla^\ell,\Bc^{-1}B^i]\nabla_i u -B^i[\nabla^\ell,\nabla_i]u + \Bc \nabla^\ell(\Bc^{-1}F),
\end{equation*}
which we observe, with the help of \eqref{symivp.1}, can be written as
\begin{align}
B^0\del{t} \nabla^\ell u + B^i\nabla_i \nabla^\ell  u = & \frac{1}{t}\Bigl[\Bc\Pbb \nabla^\ell  u  - \Bc[\nabla^\ell,\Bc^{-1}B^0](B^0)^{-1}\Bc \Pbb u \Bigr] 
+ \Bc[\nabla^\ell ,\Bc^{-1}B^0](B^0)^{-1}B^i\nabla_i u  \notag \\
&-\Bc[\nabla^\ell ,\Bc^{-1}B^i]\nabla_i u -B^i[\nabla^\ell,\nabla_i]u 
 - \Bc[\nabla^\ell ,\Bc^{-1}B^0](B^0)^{-1}F 
 + \Bc \nabla^\ell(\Bc^{-1}F). \label{symthm3.1}
\end{align}
In the following, we will use \eqref{symthm3.1} to derive energy estimates that are well behaved in the limit $t\nearrow 0$. From these energy estimates, we will deduce the global existence of solutions and decay estimates under a small initial data assumption. 

\bigskip

\noindent \underline{$L^2$ energy estimate}

\bigskip

Setting $\ell=0$ in \eqref{symthm3.1} and using the resulting symmetric hyperbolic system to derive an energy identity in the standard fashion,
we find that
\begin{equation} \label{symthm4}
\frac{1}{2} \del{t} \ip{u}{B^0 u} = \frac{1}{t}\ip{u}{\Bc \Pbb u}+\frac{1}{2} \ip{u}{\Div \! B u} + \ip{u}{F},
\end{equation}
where 
\begin{equation*}
\Div\!B = \Div \! B(t,x,u(t,x),\nabla u(t,x))
\end{equation*}
with $\Div\! B(t,x,u,w)$ as previously defined by \eqref{divBdef}, see also \eqref{divBid}.
Defining the energy norm
\begin{equation*}
\nnorm{u}^2_s =\sum_{\ell=0}^s \ip{\nabla^\ell u}{B^0 \nabla^\ell u},
\end{equation*}
we see from \eqref{B0BCbnd} and $t<0$ that the inequalities
\begin{equation*}
\frac{2}{t}\ip{v}{\Bc v} \leq \frac{2\kappa}{t}\nnorm{v}_0^2
\AND
\norm{v}_{L^2} \leq \sqrt{\gamma_1}\nnorm{v}_{0}
\end{equation*}
hold for any $v\in L^2(V)$.
With the help of these inequalities, it is not difficult to see that the estimate\footnote{The constant $C(\norm{u}_{k})$ implicitly depends on $\theta$. However, since $\theta$ will be fixed throughout
the proof, we will not indicate the dependence of any of the constants on $\theta$. The same will be true for all the other fixed constants, e.g. $\lambda_a$, $\beta_a$, $\gamma_2$, $\kappa$, and so on.} 
\begin{align*}
\del{t}\nnorm{u}^2_0\leq \frac{\bigl(2\kappa -\gamma_1\bigl[\beta_{\mathrm{o}}+2\lambda_3\big]\big)}{t}\nnorm{\Pbb u}^2_0
+&\frac{\sqrt{\gamma_1}\bigl(\beta_{\mathrm{e}}+2(\lambda_1+\lambda_2)\bigr)}{|t|^{\frac{1}{2}}}\nnorm{\Pbb u}_0\norm{u}_{L^2}
\\
&+C(\norm{u}_{H^k})\norm{u}_{L^2}\norm{u}_{H^k}+2\norm{u}_{L^2}\norm{\Ft}_{L^2},  \quad T_0\leq t < T_*,
\end{align*}
is a direct consequence of \eqref{Linfty}, \eqref{symthm4},  Proposition \ref{FdivBprop} and Sobolev's inequality, where we have set
\begin{equation*}
\beta_{\mathrm{o}} = \sum_{a=0}^3  \beta_{2a+1} \AND \beta_{\mathrm{e}} = \sum_{a=0}^3 \beta_{2a}.
\end{equation*}
The above estimate in conjunction with Young's inequality  (i.e.  $ab \leq \frac{1}{2\epsilon}a^2 + \frac{\epsilon}{2}b^2$
for $a,b\geq 0$ and $\epsilon>0$) then gives
\begin{equation} \label{symthm5}
\del{t}\nnorm{u}^2_0\leq \frac{\rho_0}{t}\nnorm{\Pbb u}^2_0
+(1+\ep^{-1})C(\nnorm{u}_{k})\nnorm{u}_{0}\nnorm{u}_k+2\sqrt{\gamma_1}\nnorm{u}_{0}\norm{\Ft}_{L^2}, \quad T_0\leq t < T_*,
\end{equation}
for any $\epsilon>0$, where
\begin{equation*} %\label{rho0def}
\rho_0 = 2\kappa -\gamma_1\bigl[\beta_{\mathrm{o}}+2\lambda_3 + \epsilon( \beta_{\mathrm{e}}+2\lambda_1+2\lambda_2) \big].
\end{equation*}
Since $2\kappa -\gamma_1\bigl[\beta_{\mathrm{o}}+2\lambda_3\big]>0$
by assumption, we choose $\epsilon$ small enough to ensure that
\begin{equation} \label{rho0pos}
\rho_0 > 0.
\end{equation}

\bigskip

\noindent \underline{$H^k$ energy estimate}

\bigskip

Before continuing, we note that the energy norm $\nnorm{u}_k$ and the standard Sobolev norm $\norm{u}_{H^k}$ are equivalent since they satisfy
\begin{equation*}
\frac{1}{\sqrt{\gamma_1}} \norm{\cdot}_{H^s} \leq \nnorm{\cdot}_s \leq \sqrt{\gamma_{2}} \norm{\cdot}_{H^s}
\end{equation*}
by \eqref{B0BCbnd}.  We will employ this equivalence below without comment.

Applying the $L^2$ energy identity, i.e.\ \eqref{symthm4}, to \eqref{symthm3.1} gives
\begin{equation} \label{symthm6}
\frac{1}{2} \del{t} \ip{\nabla^\ell u}{B^0 \nabla^\ell u} = \frac{1}{t}\ip{\nabla^\ell u}{\Bc \Pbb \nabla^\ell u}+\frac{1}{2} \ip{\nabla^\ell u}{\Div \! B \nabla^\ell u} + \ip{\nabla^\ell u}{G_\ell}, \qquad
0\leq \ell \leq k,
\end{equation}
where
\begin{align*}
G_\ell = & |t|^{-1}\Bc[\nabla^\ell,\Bc^{-1}B^0](B^0)^{-1}\Bc \Pbb u  
+ \Bc[\nabla^\ell,\Bc^{-1}B^0](B^0)^{-1}B^i\nabla_i u \notag \\
&-\Bc[\nabla^\ell,\Bc^{-1}B^i]\nabla_i u -B^i[\nabla^\ell,\nabla_i]u
 - \Bc[\nabla^\ell ,\Bc^{-1}B^0](B^0)^{-1}F 
 + \Bc \nabla^\ell(\Bc^{-1}F). 
\end{align*}
Employing \eqref{symthm6}, we obtain, with the help of \eqref{Pbbprop}, Proposition \ref{FdivBprop}, the bound \eqref{Linfty}, and Sobolev's inequality, the estimate
\begin{align}
\del{t}\nnorm{\nabla^\ell u}^2_0\leq& \frac{2\kappa-\gamma_1\beta_1}{t}\nnorm{\nabla^\ell \Pbb u}^2_0 
-\frac{\gamma_1(\beta_3+\beta_5+\beta_7)}{t}\nnorm{\Pbb u}_k\nnorm{\Pbb u}_{k-1} \notag \\
& + \frac{\gamma_1\beta_{\mathrm{e}}}{|t|^{\frac{1}{2}}}\nnorm{\Pbb u}_k \nnorm{u}_{k}
+4\theta \gamma_1 \nnorm{\nabla^\ell u}_{L^2}^2+ 2\ip{\nabla^\ell u}{G_\ell},  \qquad T_0\leq t < T_*, \label{symthm7}
\end{align}
while it is clear from Proposition \ref{Bcommprop} that $\ip{\nabla^\ell u}{G_\ell}$ is bounded by 
\begin{align}
 \ip{\nabla^\ell u}{G_\ell}\leq & -\frac{1}{t}\Bigl[\ell \mathtt{b} \norm{\Pbb u}^2_{H^k}+ C(\norm{u}_{H^k})\bigl(\norm{\Pbb u}_{H^k}\norm{\Pbb u}_{H^{k-1}}+\norm{u}_{H^k}\norm{\Pbb u}^2_{H^k}\bigr)\Bigr]
\notag \\
&+ \frac{1}{|t|^{\frac{1}{2}}}C(\norm{u}_{H^k})\norm{u}_{H^k}\norm{\Pbb u}_{H^k} + C(\norm{u}_{H^k})\Bigl[\norm{u}^2_{H^k} + \norm{u}_{H^k}\norm{\Ft}_{H^k}\Bigr], \label{symthm8}
\end{align}
where 
\begin{equation*}
\mathtt{b} =  \sup_{T_0 \leq t < 0}\Bigl( \big\|\big|\Pbb\tilde{\Bc}(t)\nabla(\tilde{\Bc}(t)^{-1}\tilde{B}^0(t))\tilde{B}^0(t)^{-1}\Pbb \tilde{B}_2(t)\Pbb\bigl|_{\op}\bigr\|_{L^\infty}
+ \big\|\big|\Pbb\tilde{\Bc}(t)\nabla(\tilde{\Bc}(t)^{-1}\tilde{B}_2(t))\Pbb\bigl|_{\op}\bigr\|_{L^\infty} \Bigr).
\end{equation*}
Combining \eqref{symthm7} and \eqref{symthm8} yields
\begin{align*}
\del{t}\nnorm{\nabla^\ell u}^2_0\leq \frac{2\kappa-\gamma_1\beta_1}{t}\nnorm{\nabla^\ell \Pbb u}^2_0 & -\frac{1}{t}\Bigl[2\gamma_1\ell \mathtt{b} \nnorm{\Pbb u}^2_{k}+ 
C(\nnorm{u}_{k})\bigl(\nnorm{\Pbb u}_{k}\nnorm{\Pbb u}_{k-1}+\nnorm{u}_{k}\nnorm{\Pbb u}^2_{k}\bigr)\Bigr]
\notag \\
&+ \frac{1}{|t|^{\frac{1}{2}}}C(\nnorm{u}_{k})\nnorm{u}_{k}\nnorm{\Pbb u}_{k} + C(\nnorm{u}_{k})\bigl(\nnorm{u}^2_{k} +\norm{\Ft}^2_{H^k}\bigr) ,  \qquad T_0\leq t < T_*. %\label{symthm9}
\end{align*}
Summing this inequality over $\ell$ from $0$ to $k$, we find, after an application of Young's inequality and Ehrling's lemma (Lemma \ref{Ehrling}), that 
%\begin{align}
%\del{t}\nnorm{u}^2_k \leq \frac{2\kappa-\gamma_1(\beta_1+2\mathtt{b}_k)-C(\nnorm{u}_{k})(\ep+\norm{u}_{k})}{t}\nnorm{\Pbb u}^2_k & -\frac{1}{t}
%c(\nnorm{u}_{k})\nnorm{\Pbb u}_{0}^2  + C(\nnorm{u}_{k})\bigl((1+C(\ep^{-1}))\nnorm{u}^2_{k} %+\norm{\Ft}^2_{H^k}\bigr)   \label{symthm9}
%\end{align}
\begin{align}
\del{t}\nnorm{u}^2_k &\leq \frac{2\kappa-\gamma_1(\beta_1+2\mathtt{b}_k)-C(\nnorm{u}_{k})(\ep+\norm{u}_{k})}{t}\nnorm{\Pbb u}^2_k \notag\\
&\qquad -\frac{1}{t}
c(\nnorm{u}_{k},\epsilon^{-1})\nnorm{\Pbb u}_{0}^2  + C(\nnorm{u}_{k},\ep^{-1})\bigl(\nnorm{u}^2_{k} +\norm{\Ft}^2_{H^k}\bigr)   \label{symthm9}
\end{align}
for any $\ep > 0$, where
\begin{equation*}
\mathtt{b}_k = \mathtt{b}\sum_{\ell=1}^k \ell = \frac{1}{2}k(k+1)\mathtt{b}.
\end{equation*}

\bigskip

\noindent \underline{Global existence on $[T_0,0)\times \Sigma$}

\bigskip

Since, initially,
\begin{equation*}
\nnorm{u(T_0)}_k \leq \sqrt{\gamma_2}\norm{u(T_0)}_{H^k} < \delta \sqrt{\gamma_2},
\end{equation*}
we can, for $\delta$ satisfying
\begin{equation} \label{deltafix}
0<\delta \leq \min\biggl\{\frac{\Rc}{2\sqrt{\gamma_1\gamma_2}},\frac{\Rc}{4}\biggr\},
\end{equation}
define $T_\delta \in (T_0,T_*]$ be the first time such that $\nnorm{u(T_\delta)}_k = 2\delta \sqrt{\gamma_2}$, or if such a time does not exist,
we set $T_\delta = T^*$, the maximal time of existence. In either case, we have that
\begin{equation*}
\nnorm{u(t)}_k \leq  2\delta \sqrt{\gamma_2}, \qquad T_0\leq t < T_\delta.
\end{equation*}
Note that by this definition,
\begin{equation*}
\norm{u(t)}_{H^k} \leq \sqrt{\gamma_1} \nnorm{u(t)}_k \leq 2\delta \sqrt{\gamma_1\gamma_2} \leq \Rc, \qquad T_0 \leq t < T_\delta \leq T_*\leq T^*.
\end{equation*}
Choosing $\ep$ so that
%\begin{equation*} %\label{delta}
$\ep = \delta\sqrt{\gamma_2}$
%\end{equation*}
allows us to write \eqref{symthm9} as
%\begin{align}\label{symthm10}
%\del{t}\nnorm{u}^2_k \leq \frac{\rho_k}{t}\nnorm{\Pbb u}^2_k & -\frac{1}{t}
%c(\delta)\nnorm{\Pbb u}_{0}^2  + C(\delta,\delta^{-1})\bigl(\nnorm{u}^2_{k} +\norm{\Ft}^2_{H^k}\bigr), \qquad T_0 \leq t < T_\delta,
%\end{align}
\begin{align}\label{symthm10}
\del{t}\nnorm{u}^2_k \leq \frac{\rho_k}{t}\nnorm{\Pbb u}^2_k & -\frac{1}{t}
c(\delta,\delta^{-1})\nnorm{\Pbb u}_{0}^2  + C(\delta,\delta^{-1})\bigl(\nnorm{u}^2_{k} +\norm{\Ft}^2_{H^k}\bigr), \qquad T_0 \leq t < T_\delta,
\end{align}
where
\begin{equation*}%\label{rhokdef}
\rho_k = 2\kappa-\gamma_1(\beta_1+2\mathtt{b}_k)-C(\delta)\delta.
\end{equation*}
Since $\lim_{\delta \searrow 0}C(\delta)\delta = 0$ and $2\kappa-\gamma_1(\beta_1+2\mathtt{b}_k) > 0$ by assumption, it follows that we can ensure that
\begin{equation}\label{rhokpos}
\rho_k > 0
\end{equation}
by choosing $\delta>0$ small enough.
%, which of course means that $\delta$ is chosen small enough to satisfy \eqref{delta}. 
Adding 
%$\rho^{-1}_0c(\delta)$
$\rho^{-1}_0c(\delta,\delta^{-1})$
times \eqref{symthm5} to \eqref{symthm10} yields the differential energy inequality
\begin{align*}
\del{t}\bigl(\nnorm{u}^2_k %+\rho^{-1}_0c(\delta)\nnorm{u}_0^2 \bigr)\leq \frac{\rho_k}{t}\nnorm{\Pbb u}^2_k + C(\delta,\delta^{-1})\bigl(\nnorm{u}^2_{k} +\norm{\Ft}^2_{H^k}\bigr), \qquad T_0\leq t < T_\delta,
+\rho^{-1}_0c(\delta,\delta^{-1})\nnorm{u}_0^2 \bigr)\leq \frac{\rho_k}{t}\nnorm{\Pbb u}^2_k + C(\delta,\delta^{-1})\bigl(\nnorm{u}^2_{k} +\norm{\Ft}^2_{H^k}\bigr), \qquad T_0\leq t < T_\delta,
\end{align*}
from which we get
\begin{equation*}
\del{t} E_k \leq C(\delta,\delta^{-1})E_k, \qquad T_0\leq t < T_\delta,
\end{equation*}
by setting
\begin{equation} \label{Ekdef}
E_k(t) = \nnorm{u(t)}^2_k %+\rho^{-1}_0c(\delta)\nnorm{u(t)}_0^2 
+\rho^{-1}_0c(\delta,\delta^{-1})\nnorm{u(t)}_0^2 
-\int_{T_0}^t \frac{\rho_k}{\tau}\nnorm{\Pbb u(\tau)}^2_k\, d\tau + \sup_{T_0 \leq \tau < 0}\norm{\Ft(\tau)}^2_{H^k}.
\end{equation}
By Gronwall's inequality, we then conclude that
\begin{equation} \label{Eest1}
E_k(t) \leq e^{C(\delta,\delta^{-1})(t-T_0)}E_k(T_0), \qquad T_0\leq t < T_\delta.
\end{equation}

With $\delta$ %and $\ep$ 
fixed so that \eqref{rho0pos}, \eqref{deltafix}, %\eqref{delta}   
and \eqref{rhokpos} all hold, we consider $\delta_0 \in (0,\delta)$ and assume that
\begin{equation} \label{delta0def}
\norm{u(T_0)}_{H^k} \leq \delta_0 \AND  \sup_{T_0 \leq \tau < 0}\norm{\Ft(\tau)}^2_{H^k} \leq \delta_0.
\end{equation}
Then by \eqref{Ekdef} and \eqref{Eest1}, we deduce that
\begin{equation} \label{Eest2}
\nnorm{u(t)}_k \leq %e^{C(\delta,\delta^{-1})(-T_0)}\bigl(2+\rho^{-1}_0c(\delta)\bigr)\delta_0,  \qquad T_0\leq t < T_\delta.
e^{C(\delta,\delta^{-1})(-T_0)}\bigl(2+\rho^{-1}_0c(\delta,\delta^{-1})\bigr)\delta_0,  \qquad T_0\leq t < T_\delta.
\end{equation}
By choosing $\delta_0$ small enough so that
\begin{equation} \label{deltain}
0<e^{C(\delta,\delta^{-1})(-T_0)}\bigl(2+\rho^{-1}_0c(\delta,\delta^{-1})\bigr)\delta_0 < \delta\sqrt{\gamma_2},
\end{equation}
it is clear that \eqref{Eest2} implies the bound
\begin{equation*}
\nnorm{u(t)}_k < \delta \sqrt{\gamma_2}, \quad T_0\leq t < T_\delta.
\end{equation*}
From the definition of $T_\delta$ and the maximality of $T^*$, we conclude that $T_\delta=T_*=T^*=0$. Thus, we have established the global existence of solutions on $[T_0,0)\times \Sigma$ that are 
uniformly bounded by
\begin{equation} \label{deltahat}
\norm{u(t)}_{H^k} \leq \sqrt{\gamma_1}\nnorm{u(t)}_k \leq \hat{\delta} := 2\delta \sqrt{\gamma_1 \gamma_2}, \quad T_0\leq t < 0.
\end{equation}
Moreover, from \eqref{Ekdef}, \eqref{Eest1} and the equivalence of norms $\norm{\cdot}_{H^k}$ and $\nnorm{\cdot}_k$, we see immediately that energy estimate
\begin{equation} \label{energy}
 \norm{u(t)}^2_{H^k} +  \sup_{T_0 \leq \tau < 0}\norm{\Ft(\tau)}^2_{H^k} -\int_{T_0}^t \frac{1}{\tau}\norm{\Pbb u(\tau)}^2_{H^k}\, d\tau \leq  C(\delta)e^{C(\delta,\delta^{-1})(t-T_0)}\Bigl(\norm{u(T_0)}^2_{H^k} +  
 \sup_{T_0 \leq \tau < 0}\norm{\Ft(\tau)}^2_{H^k} \Bigr)
\end{equation}
holds for $T_0\leq t < 0$.

\bigskip

\noindent \underline{Limit of $\Pbb^\perp u$ as $t\nearrow 0$}

\bigskip

Writing \eqref{symivp.1} as
\begin{equation*}
\del{t}u =  (B^0)^{-1}\biggl(-B^i\nabla_i u + \frac{1}{t} \Bc \Pbb u + F\biggr),
\end{equation*}
we see after multiplying of the left by $\Pbb^\perp$ that 
\begin{equation}  \label{PbbperplimA}
\del{t}\Pbb^\perp u  = \Pbb^\perp (B^0)^{-1}\biggl(-B^i\nabla_i u + \frac{1}{t} \Bc \Pbb u + F\biggr).
\end{equation}
Integrating in time and applying the $H^{k-1}$ norm yields
\begin{equation}\label{PbbperplimBb}
\norm{\Pbb^\perp u(t_2) - \Pbb^\perp u(t_1)}_{H^{k-1}} \leq \int_{t_1}^{t_2} \Bigl\|  \Pbb^\perp (B^0(\tau))^{-1}\biggl(-B^i(\tau)\nabla_i u(\tau) + \frac{1}{\tau} \Bc(\tau)
 \Pbb u(\tau) + F(\tau)\biggr)\Bigr\|_{H^{k-1}}\, d\tau
\end{equation}
for any $t_1,t_2$ satisfying $T_0 \leq t_1 < t_2$. Using Proposition \ref{Pperpprop} to estimate the integrand of the right hand side of the above inequality, we have
\begin{align} 
\norm{\Pbb^\perp u(t_2) - \Pbb^\perp u(t_1)}_{H^{k-1}} \leq 
&\int_{t_1}^{t_2} C(\norm{u(\tau)}_{H^k})\biggl(\norm{\Ft(\tau)}_{H^k}+1
\notag \\
&\qquad +
\frac{1}{|\tau|^{\frac{1}{2}}}\norm{\Pbb u(\tau)}_{H^k}-\frac{1}{\tau}\norm{\Pbb u(\tau)}_{H^k}\norm{\Pbb u(\tau)}_{H^{k-1}}\biggr)
\, d\tau  \label{PbbperplimBa}
\end{align}
and hence, with the help of the bounds \eqref{delta0def}, \eqref{deltain} and \eqref{energy},  that\footnote{Note that we chose $\delta_0$ small enough to make the right hand side of \eqref{Eest2} comparable to $\delta$, which,
in turn, makes the right hand side of \eqref{energy} comparable to
$\delta$.}
\begin{equation} \label{PbbperplimB}
\norm{\Pbb^\perp u(t_2) - \Pbb^\perp u(t_1)}_{H^{k-1}} \leq C(\delta)\biggl(|t_1-t_2|-\int_{t_1}^{t_2} \frac{1}{\tau}\norm{\Pbb u(\tau)}_{H^k}^2 \, d\tau \biggr) = \textrm{o}(|t_1-t_2|).
\end{equation}
From this inequality, we conclude that the limit $\lim_{t\nearrow 0}\Pbb^\perp u(t)$ exists in $H^{k-1}(V)$ and that 
\begin{equation*}
\Pbb^\perp u \in C^0([T_0,0],H^{k-1}(V)).
\end{equation*}

\bigskip

\noindent \underline{$L^2$ decay estimate for $\Pbb u$}

\bigskip

Multiplying \eqref{symivp.1} on the left by $\Pbb$, we observe, with the help of \eqref{Pbbprop}, that $\Pbb u$ satisfies
\begin{equation} \label{Pbbueqn}
\Pbb B^0 \Pbb \del{t} \Pbb u + \Pbb B^i \Pbb \nabla_i \Pbb u = \frac{1}{t} \Bc \Pbb u + \Fc,
\end{equation}
where
\begin{equation} \label{Fcdef}
\Fc = -\Pbb B^0\Pbb^\perp  \del{t} \Pbb^\perp u -\Pbb B^i \Pbb^\perp \nabla_i u + \Pbb F. 
\end{equation}
Taking $L^2$ inner-product of \eqref{Pbbueqn} with $\Pbb u$ yields the energy identity
\begin{equation} \label{Pbbueest1}
\frac{1}{2}\del{t}\ip{\Pbb u}{B^0\Pbb u} = \frac{1}{t}\ip{\Pbb u}{\Bc \Pbb u} +\frac{1}{2} \ip{u}{\Pbb \Div\! B\, \Pbb u} + \ip{\Pbb u}{\Pbb \Fc}.
\end{equation}

Next, we observe that
\begin{equation} \label{Pbbueest2}
\frac{1}{t}\ip{\Pbb u}{\Bc \Pbb u} \leq \frac{\kappat}{t} \nnorm{\Pbb u}_0^2
\end{equation}
holds by \eqref{B0BCbndt}, 
and also that the estimate
\begin{equation} \label{FcestA}
|\ip{\Pbb u}{\Pbb B^i\Pbb^\perp \nabla_i u}|\leq C(\delta)\norm{u}_{H^k}\norm{\Pbb u}_{L^2}
+|t|^{-\frac{1}{2}}\alpha \norm{\nabla \Pbb^\perp u}_{L^2}\norm{\Pbb u}_{L^2} + |t|^{-1}C(\delta)\norm{\nabla \Pbb^\perp u}_{L^2}\norm{\Pbb u}_{L^2}^2
\end{equation}
is a straightforward consequence of the calculus inequalities, the expansion \eqref{Bexp} for $B=(B^i)$, and the properties \eqref{BI1bnd.2} and \eqref{BI2bnd.1} of the expansions coefficients 
$\Pbb B_a \Pbb^\perp$, $a=1,2$. Moreover,  by first applying the $H^{k-1}$ norm to \eqref{PbbperplimA} and then integrating in time, the same arguments that led to  \eqref{PbbperplimB}
show that
\begin{equation} \label{PbbperpestA}
\int_{T_0}^t \norm{\del{t}\Pbb^\perp u(\tau)}_{H^{k-1}} \, d\tau \leq C(\delta), \qquad T_0 \leq t < 0,
\end{equation} 
while it is clear from \eqref{deltahat}, \eqref{PbbperplimA} and Proposition \ref{Pperpprop} that
\begin{equation} \label{PbbperpestB}
\norm{|t|^{\frac{1}{2}}\del{t}\Pbb^\perp u(t)}_{L^2} \leq C(\delta)\biggl(1+\frac{1}{|t|^{\frac{1}{2}}}\norm{\Pbb u}_{L^2} \biggr), \qquad T_0 \leq t < 0.
\end{equation} 

From the energy identity \eqref{Pbbueest1}, the estimates \eqref{Pbbueest2}, \eqref{FcestA} and \eqref{PbbperpestB}, the coefficient bounds \eqref{B0bnd.2},  \eqref{F0bnd}, \eqref{F1bnd.1}, 
\eqref{divBbnd.1}, and \eqref{B0BCbndt}, the expansion \eqref{fexp} for $F$ along with \eqref{F2vanish}, and Proposition \ref{FdivBprop}, we obtain, with the help of the bound 
\eqref{deltahat} and the calculus inequalities, the differential energy inequality
\begin{align*} 
\frac{1}{2}\del{t}\nnorm{\Pbb u}^2_0 \leq \biggl[\frac{1}{t}\biggl(\kappat- \gammat_1\biggl(\frac{\beta_1}{2} +\norm{\nabla \Pbb^\perp u}_{L^2}&C(\delta)\biggr) \biggr)+C(\delta)
\biggl(1+\frac{1}{|t|^{\frac{1}{2}}}+ \norm{\del{t}\Pbb^\perp u}_{H^{k-1}}\biggr)\biggr]\nnorm{\Pbb u}^2_0 \\
&+ \biggl[C(\delta) + \frac{\bigl(\norm{u}_{L^2}\lambda_1+\norm{\nabla \Pbb^\perp u}_{L^2}\alpha\bigr)\sqrt{\gammat_1}}{|t|^{\frac{1}{2}}} \biggr] \nnorm{\Pbb u}_0
\end{align*}
that, after dividing through by $\nnorm{\Pbb u}_0$, gives
\begin{equation}  \label{Pbbueest3}
\del{t}\nnorm{\Pbb u}_0 \leq \biggl[\frac{\hat{\rho}}{t}+C(\delta)
\biggl(1+\frac{1}{|t|^{\frac{1}{2}}}+ \norm{\del{t}\Pbb^\perp u}_{H^{k-1}}\biggr)\biggr]\nnorm{\Pbb u}_0 
+ C(\delta) \biggl[1 + \frac{(\lambda_1+\alpha)}{|t|^{\frac{1}{2}}} \biggr]
\end{equation}
where
\begin{equation*} %\label{rhohdef}
\hat{\rho} = \kappat- \gammat_1\biggl(\frac{\beta_1}{2} +C(\delta)\delta\biggr).
\end{equation*}
Since $\kappat- \frac{\gammat_1 \beta_1}{2} >0$ by assumption, we see, by shrinking $\delta$ if necessary, that we can arrange that
\begin{equation*}
\hat{\rho} >0.
\end{equation*}
The $L^2$ decay estimate
\begin{equation}\label{L2decayA}
\norm{\Pbb u(t)}_{L^2} \lesssim \begin{cases}
|t|+(\lambda_1+\alpha)|t|^{\frac{1}{2}} & \text{if $\hat{\rho} > 1$} \\
-|t| \ln\bigl(\frac{t}{T_0}\bigr)+(\lambda_1+\alpha)|t|^{\frac{1}{2}} & \text{if $\hat{\rho} = 1$ } \\
|t|^{\hat{\rho}}+(\lambda_1+\alpha)|t|^{\frac{1}{2}} & \text{if $\frac{1}{2} < \hat{\rho} < 1$} \\
|t|^{\frac{1}{2}}-(\lambda_1+\alpha)|t|^{\frac{1}{2}} \ln\bigl(\frac{t}{T_0}\bigr) & \text{if $\hat{\rho}= \frac{1}{2}$}\\
|t|^{\hat{\rho}}   & \text{if $0 < \hat{\rho} < \frac{1}{2}$}\end{cases}, \qquad T_0 \leq t < 0,
\end{equation}
then follows directly from \eqref{PbbperpestA}, \eqref{Pbbueest3}, Gronwall's inequality\footnote{Here, we are using the following form of Gronwall's inequality: if $x(t)$ satisfies
$x'(t) \leq a(t)x(t) +h(t)$, $t\geq T_0$, then $x(t)\leq x(T_0)e^{A(t)}+ \int_{T_0}^t e^{A(t)-A(\tau)}h(\tau) \, d\tau$ where
$A(t)= \int_{T_0}^t a(\tau)\, d\tau$. In particular, we observe from this that if, $x(T_0)\geq 0$ and  $a(t)=\frac{\lambda}{t}+b(t)$, where  $\lambda \in \Rbb$ and $|\int_{T_0}^t b(\tau) \, d\tau |\leq r$, then
\eqn{gronwall}{
x(t) \leq e^{r}x(T_0)\left(\frac{t}{T_0}\right)^\lambda +   e^{2r}(-t)^\lambda \int_{T_0}^t \frac{|h(\tau)|}{(-\tau)^\lambda}\, d\tau
}
for $T_0 \leq t < 0$.
}
and the integral formula
\begin{equation} \label{intform}
(-t)^\lambda \int_{T_0}^t \frac{1}{(-\tau)^{\lambda+\mu}}\, d\tau = \begin{cases} \frac{1}{\lambda+\mu -1}|t|^{1-\mu} + \frac{|T_0|^{1-(\lambda+\mu)}}{1-(\lambda+\mu)}|t|^\lambda &
\text{if $\lambda+\mu \neq 1$} \\
-|t|^\lambda\ln\bigl(\frac{t}{T_0} \bigr) & \text{if $\lambda+\mu =1$}\end{cases}.
\end{equation}

\bigskip

\noindent \underline{$H^{k-1}$ decay estimate for $\Pbb u$}

\bigskip

Applying $\Bc \Pbb \nabla^\ell \Bc^{-1} \Pbb $, $0\leq \ell \leq k-1$, to \eqref{Pbbueqn} on the left yields
\begin{align*}
\Pbb B^0\Pbb\del{t} \nabla^\ell\Pbb u + &\Pbb B^i \Pbb \nabla_i \nabla^\ell \Pbb u = \frac{1}{t}\Bc \nabla^\ell \Pbb u - \Bc\Pbb [\nabla^\ell,\Bc^{-1}\Pbb B^0 \Pbb ]\Pbb\del{t}u \notag \\
&-\Bc\Pbb [\nabla^\ell,\Bc^{-1}\Pbb B^i \Pbb]\nabla_i \Pbb u -\Pbb B^i \Pbb [\nabla^\ell,\nabla_i]\Pbb u + \Bc \Pbb \nabla^\ell(\Bc^{-1}\Pbb \Fc).
\end{align*}
Using \eqref{symivp.1}, we can write this
\begin{align}
\Pbb B^0\Pbb \del{t} \nabla^\ell\Pbb  u + \Pbb B^i\Pbb \nabla_i \nabla^\ell \Pbb  u &=  \frac{1}{t}\Bigl[\Bc\Pbb \nabla^\ell \Pbb  u  - 
\Bc\Pbb [\nabla^\ell,\Bc^{-1}\Pbb B^0\Pbb ]\Pbb (B^0)^{-1}\Bc \Pbb u \Bigr] 
\notag \\
&+ \Bc\Pbb [\nabla^\ell ,\Bc^{-1}\Pbb B^0\Pbb ]\Pbb (B^0)^{-1}B^i\nabla_i u  
-\Bc\Pbb [\nabla^\ell ,\Bc^{-1}\Pbb B^i\Pbb ]\nabla_i \Pbb u \notag \\
& -\Pbb B^i\Pbb [\nabla^\ell,\nabla_i]\Pbb u 
- \Bc\Pbb [\nabla^\ell ,\Bc^{-1}\Pbb B^0\Pbb ]\Pbb (B^0)^{-1}F 
 + \Bc\Pbb  \nabla^\ell(\Bc^{-1}\Fc). \label{Hkdec1}
\end{align}
Taking the $L^2$ inner-product of \eqref{Hkdec1} with $\nabla^\ell \Pbb u$ yields the energy identity
\begin{equation} \label{Hkdec2}
\frac{1}{2}\del{t}\ip{\nabla^\ell \Pbb u}{B^0 \nabla^\ell \Pbb u} = \frac{1}{t}\ip{\nabla^\ell \Pbb u}{\Bc \nabla^\ell \Pbb u} +\frac{1}{2} \ip{\nabla^\ell \Pbb u}{\Pbb \Div\! B\, \Pbb
\nabla^\ell \Pbb u} + \ip{\nabla^\ell \Pbb u}{ \Gc_\ell} + \ip{\nabla^\ell \Pbb u}{\Bc\Pbb  \nabla^\ell(\Bc^{-1}\Fc)}
\end{equation}
where
\begin{align*}
\Gc_\ell =&    |t|^{-1}\Bc\Pbb [\nabla^\ell,\Bc^{-1}\Pbb B^0\Pbb ]\Pbb (B^0)^{-1}\Bc \Pbb u 
+ \Bc\Pbb [\nabla^\ell ,\Bc^{-1}\Pbb B^0\Pbb ]\Pbb (B^0)^{-1}B^i\nabla_i u  
\\
&-\Bc\Pbb [\nabla^\ell ,\Bc^{-1}\Pbb B^i\Pbb ]\nabla_i \Pbb u 
 -\Pbb B^i\Pbb [\nabla^\ell,\nabla_i]\Pbb u 
- \Bc\Pbb [\nabla^\ell ,\Bc^{-1}\Pbb B^0\Pbb ]\Pbb (B^0)^{-1}F.
\end{align*}

Since
\begin{equation*}
\Pbb \nabla^\ell \Pbb u = \nabla^\ell \Pbb u,
\end{equation*}
it follows from \eqref{B0BCbndt} that
\begin{equation} \label{Hkdec3}
\frac{1}{t}\ip{ \nabla^\ell \Pbb u}{\Bc  \nabla^\ell\Pbb u} \leq \frac{\kappat}{t} \nnorm{\nabla^\ell \Pbb u}_0^2.
\end{equation}
Moreover, using \eqref{B0bnd.2}, \eqref{fexp}-%, \eqref{F2vanish}, \eqref{F0bnd}, 
\eqref{F1bnd.1}, 
\eqref{Bexp}, \eqref{BI1bnd.2} and \eqref{BI2bnd.1}, it is not difficult from the definition of
$\Fc$, see \eqref{Fcdef}, to verify, with the help of the calculus
inequalities, that the last term in \eqref{Hkdec2} can be estimated as
\begin{align*}
|\ip{\nabla^\ell \Pbb u}{\Bc\Pbb  \nabla^\ell(\Bc^{-1}\Fc)}| \leq& C(\norm{u}_{H^{k}})\norm{\nabla^\ell \Pbb u}_{L^2}\Bigl[1+\norm{\Ft}_{H^{k-1}}+ \norm{t^{\frac{1}{2}}\del{t}\Pbb^\perp u}_{H^{k-1}}
+\norm{\Pbb u}_{H^{k-1}}\norm{\del{t}\Pbb^{\perp}u}_{H^{k-1}} \notag  \\
&+ |t|^{-\frac{1}{2}}\bigl(\alpha \norm{\Pbb^\perp u}_{H^k} + \lambda_1 \norm{u}_{H^{k-1}}\bigr) +|t|^{-1}C\norm{\Pbb^\perp u}_{H^k}\norm{\Pbb u}_{H^{k-1}}
\Bigr]. %\label{Hkdec4}
\end{align*}
Recalling \eqref{PbbperplimA}, the above estimate in conjunction with Proposition \ref{Pperpprop} and the bounds \eqref{delta0def} and \eqref{deltain}-\eqref{deltahat} implies that
\begin{equation} \label{Hkdec5}
|\ip{\nabla^\ell \Pbb u}{\Bc\Pbb  \nabla^\ell(\Bc^{-1}\Fc)}| \leq C(\delta)\norm{\nabla^\ell \Pbb u}_{L^2}\biggl[ \biggl( -\frac{\delta}{t} + \frac{1}{|t|^{\frac{1}{2}}} +
\norm{\del{t}\Pbb^{\perp}u}_{H^{k-1}}
\biggr)\norm{\Pbb u}_{H^{k-1}}+ 1 + \frac{\lambda_1+\alpha}{|t|^{\frac{1}{2}}}
\biggr].
\end{equation}
We further observe that the estimate 
\begin{align*}
|\ip{\nabla^\ell \Pbb u}{ \Gc_\ell}| \leq& \norm{\nabla^\ell \Pbb u}_{L^2}\biggl[ -\frac{\ell\tilde{\mathtt{b}}}{t}\norm{\Pbb u}_{H^{k-1}} + C(\norm{u}_{H^k})\biggl( -\frac{1}{t} \Bigl(\norm{\Pbb u}_{H^{k-2}}
+\norm{u}_{H^{k-1}}\norm{\Pbb u}_{H^{k-1}} \Bigr) 
\notag \\
&+\norm{\Ft}_{H^{k-1}}+\norm{u}_{H^{k-1}} + \frac{1}{|t|^{\frac{1}{2}}}\Bigl((\lambda_1+\alpha)\norm{u}_{H^{k-1}}+\norm{\Pbb u}_{H^{k-1}}\Bigr) \biggr) \biggr], %\label{Hkdec6}
\end{align*}
where
\begin{align*}
\tilde{\mathtt{b}} =  \sup_{T_0 \leq t < 0}\Bigl( \big\|\big|\Pbb\tilde{\Bc}(t)\nabla(\tilde{\Bc}(t)^{-1}\Pbb \tilde{B}^0(t) \Pbb)\Pbb \tilde{B}^0(t)^{-1} \tilde{B}_2(t)\Pbb\bigl|_{\op}\bigr\|_{L^\infty}
+ \big\|\big|\Pbb\tilde{\Bc}(t)\nabla(\tilde{\Bc}(t)^{-1}\tilde{B}_2(t))\Pbb\bigl|_{\op}\bigr\|_{L^\infty} \Bigr),
\end{align*}
for the second to last term in \eqref{Hkdec2} is a direct consequence of Proposition \ref{BcommpropA}, which, given the bounds \eqref{delta0def}, \eqref{deltain} and \eqref{deltahat}, implies via
an application of Ehrling's lemma (Lemma \ref{Ehrling}) that
\begin{align}
|\ip{\nabla^\ell \Pbb u}{ \Gc_\ell}| \leq &\norm{\nabla^\ell \Pbb u}_{L^2}\biggl[ \biggl(-\frac{\ell \tilde{\mathtt{b}} +
C(\delta)(\mu+\delta)}{t}+ \frac{C(\delta)}{|t|^{\frac{1}{2}}}\biggr)\norm{\Pbb u}_{H^{k-1}} \notag \\
&\hspace{2.5cm}+ C(\delta)\biggl(1+\frac{\lambda_1+\alpha}{|t|^{\frac{1}{2}}}\biggr) - \frac{C(\delta,\mu^{-1})}{t}\norm{\Pbb u}_{L^2}\biggr] \label{Hkdec7}
\end{align}
for any $\mu >0$. Choosing $\mu=\delta$, we deduce, from \eqref{divBbnd.1}, \eqref{Hkdec2}, and the inequalities  \eqref{B0BCbndt}, \eqref{Hkdec3}, \eqref{Hkdec5} and \eqref{Hkdec7} 
that
\begin{align*}
\frac{1}{2}\del{t}&\nnorm{\nabla^\ell \Pbb u}_0^2 \leq \biggl[\biggl( \frac{1}{t}\biggl(\kappat -\frac{\gammat_1\beta_1}{2}\biggr)+ \frac{\gammat_1\beta_0}{2|t|^{\frac{1}{2}}} 
\biggr)\nnorm{\nabla^\ell \Pbb u}_0
+\biggl(-\frac{\gammat_1 \ell \tilde{\mathtt{b}}+C(\delta)\delta}{t}  \\
&+  C(\delta)\biggl(1+\frac{1}{|t|^{\frac{1}{2}}} + \norm{\del{t}\Pbb^{\perp}u}_{H^{k-1}}\biggr)\biggl)\nnorm{\Pbb u}_{k-1}
+ C(\delta)\biggl(1+\frac{\lambda_1+\alpha}{|t|^{\frac{1}{2}}}\biggr)-  \frac{C(\delta,\delta^{-1})}{t}\norm{\Pbb u}_{L^2} \biggr] \nnorm{\nabla^\ell \Pbb u}_0.
\end{align*}
Dividing the above expression by $\nnorm{\nabla^\ell \Pbb u}_0$ and summing over $\ell$ from 1 to $k-1$, we see that $\nnorm{\Pbb u}_{k-1}$
satisfies  
\begin{equation} \label{Hkdec8}
\del{t}\nnorm{\Pbb u}_{k-1} \leq  \biggl[\frac{\tilde{\rho}}{t}+ C(\delta)\biggl(1+\frac{1}{|t|^{\frac{1}{2}}} + \norm{\del{t}\Pbb^{\perp}u}_{H^{k-1}}\biggr)\biggr] \nnorm{\Pbb u}_{k-1}
+ C(\delta)\biggl(1+\frac{\lambda_1+\alpha}{|t|^{\frac{1}{2}}}\biggr)-  \frac{C(\delta,\delta^{-1})}{t}\norm{\Pbb u}_{L^2},
\end{equation}
where
\begin{equation*} %\label{rhotdef}
\tilde{\rho} = \kappat -\gammat_1\biggl(\frac{\beta_1}{2}+\tilde{\mathtt{b}}_k\biggr) - C(\delta)\delta
\end{equation*}
and
\begin{equation*}
\tilde{\mathtt{b}}_k =\tilde{\mathtt{b}}\sum_{\ell = 1}^{k-1}\ell = \frac{(k-1)k}{2}\tilde{\mathtt{b}}.
\end{equation*}

Since $\kappat -\gammat_1\bigl(\frac{\beta_1}{2}+\tilde{\mathtt{b}}_k\bigr) >0$ by assumption, we can arrange, by choosing $\delta>0$ small enough, that
\begin{equation*}
\rhot  > 0.
\end{equation*}
We then obtain from  \eqref{PbbperpestA}, \eqref{L2decayA}, \eqref{intform},  \eqref{Hkdec8}  and Gronwall's inequality the decay estimate
\begin{equation*} %\label{HkdecayA}
\norm{\Pbb u(t)}_{H^{k-1}} \lesssim \begin{cases}
|t|+(\lambda_1+\alpha)|t|^{\frac{1}{2}} & \text{if $\tilde{\rho} > 1$} \\
-|t| \ln\bigl(\frac{t}{T_0}\bigr)+(\lambda_1+\alpha)|t|^{\frac{1}{2}} & \text{if $\tilde{\rho} = 1$ } \\
|t|^{\tilde{\rho}}+(\lambda_1+\alpha)|t|^{\frac{1}{2}} & \text{if $\frac{1}{2} < \tilde{\rho} < 1$} \\
|t|^{\frac{1}{2}}-(\lambda_1+\alpha)|t|^{\frac{1}{2}} \ln\bigl(\frac{t}{T_0}\bigr) & \text{if $\tilde{\rho}= \frac{1}{2}$}\\
|t|^{\tilde{\rho}}   & \text{if $0 < \tilde{\rho} < \frac{1}{2}$}\end{cases}, \qquad T_0 \leq t < 0.
\end{equation*}
Given any $\sigma > 0$, it follows from this inequality 
that we can choose $\delta>0$ small enough so that 
\begin{equation} \label{HkdecayB}
\norm{\Pbb u(t)}_{H^{k-1}} \lesssim \begin{cases}
|t|+(\lambda_1+\alpha)|t|^{\frac{1}{2}} & \text{if $\zeta > 1$} \\
|t|^{\zeta-\sigma}+(\lambda_1+\alpha)|t|^{\frac{1}{2}} & \text{if $\frac{1}{2} < \zeta \leq 1$} \\
|t|^{\zeta-\sigma}   & \text{if $0 < \zeta \leq \frac{1}{2}$}\end{cases}, \qquad T_0 \leq t < 0,
\end{equation}
where
\begin{equation*}
\zeta = \kappat -\gammat_1\biggl(\frac{\beta_1}{2}+\tilde{\mathtt{b}}_k\biggr).
\end{equation*}

\bigskip

\noindent \underline{$H^{k-1}$ decay estimate for $\Pbb^\perp u-\Pbb^\perp u|_{t=0}$}

\bigskip

To complete the proof, we note, via the Cauchy Schwartz inequality, that
\begin{equation*} \label{decayCS}
\int_{t_1}^{t_2} -\frac{1}{\tau}\norm{\Pbb u(\tau)}_{H^{k-1}}
\norm{\Pbb u(\tau)}_{H^{k}} \, d \tau \leq \biggl(\int_{t_1}^{t_2} -\frac{1}{\tau}\norm{\Pbb u(\tau)}_{H^{k-1}}^2\, d\tau\biggr)^{\frac{1}{2}} 
\biggl(\int_{t_1}^{t_2}- \frac{1}{\tau}\norm{\Pbb u(\tau)}_{H^{k}}^2\, d\tau 
\biggr)^{\frac{1}{2}}
\end{equation*}
for all $T_0\leq t_1 < t_2 < 0$. Using this in conjunction with the inequalities
\eqref{energy}, \eqref{PbbperplimBa} and \eqref{HkdecayB} yields
the estimate
\begin{equation*}
\norm{\Pbb^\perp u(t_2) - \Pbb^\perp u(t_1)}_{H^{k-1}} \lesssim
\begin{cases}  
\Bigl(-((-t_2)^{2(\zeta-\sigma)}-t_2\bigr)+
\bigl((-t_1)^{2(\zeta-\sigma)}-t_1)\bigr)\Bigr)^{\frac{1}{2}} 
& \text{if $\zeta > \frac{1}{2}$ } \\
   \Bigl(-(-t_2)^{2(\zeta-\sigma)}+(-t_1)^{2(\zeta-\sigma)}\Bigr)^{\frac{1}{2}}  &  \text{if $0<\zeta \leq \frac{1}{2}$ }
 \end{cases}. %\label{HkdecayC}
\end{equation*}
Sending $t_2\nearrow 0$ then shows that $\Pbb^\perp u$ satisfies the decay
estimate
\begin{equation*}
\norm{\Pbb^\perp u(t) - \Pbb^\perp u(0)}_{H^{k-1}} \lesssim
 \begin{cases}  |t|^{\frac{1}{2}}+ |t|^{\zeta-\sigma}
& \text{if $\zeta > \frac{1}{2}$} \\
 |t|^{\zeta-\sigma}  &  \text{if $\zeta \leq \frac{1}{2} $ }
 \end{cases}, \qquad T_0 \leq t < 0.
\end{equation*}
\end{proof}

\begin{rem} \label{divBassumptions}
Applying the covariant derivative $\nabla_i$ to \eqref{symivp.1} will, by \eqref{symthm3.1} with $\ell=1$, yield an equation for $\nabla_i u$ that is of the same form as \eqref{symivp.1}.
This equation together with \eqref{symivp.1} defines a system of evolution equations for $(u,\nabla_i u)$. It is then not difficult to check that the all the assumptions needed to apply
Theorem \eqref{symthm} to the evolution system for $(u,\nabla_i u)$ will hold provided the assumptions (i)-(iv) from Section \ref{coeffassump} are satisfied 
and the following generalization of assumption (v) holds:  
\begin{align}
\Pbb(\pi(v)) \Div \! B(t,v,w) \Pbb(\pi(v)) &= \Ordc\bigl(\theta+  |t|^{-\frac{1}{2}}\beta_0 + |t|^{-1}\beta_1\bigl), \label{divBbndg.1}\\
\Pbb(\pi(v))  \Div\! B(t,v,w) \Pbb^\perp(\pi(v)) &= 
\Ordc\biggl(\theta+|t|^{-\frac{1}{2}}\beta_2
+ \frac{|t|^{-1}\beta_3}{R}\bigl(\Pbb(\pi(v))v+  \Pbb(\pi(v)) w \bigr)\biggr), \label{divBbndg.2}\\
\Pbb^\perp(\pi(v)) \Div\! B(t,v,w) \Pbb(\pi(v))&= 
\Ordc\biggl(\theta+|t|^{-\frac{1}{2}}\beta_4
+ \frac{|t|^{-1}\beta_5}{R}\bigl(\Pbb(\pi(v))v+  \Pbb(\pi(v)) w \bigr) \biggr) \label{divBbndg.3}
\intertext{and}
\Pbb^\perp(\pi(v)) \Div\! B(t,v,w) \Pbb^\perp(\pi(v))& =\Ordc\biggl(\theta+
\frac{|t|^{-\frac{1}{2}}\beta_6}{R}\bigl(\Pbb(\pi(v))v+  \Pbb(\pi(v)) w \bigr)
\notag \\
& \hspace{1.0cm}+ \frac{|t|^{-1}\beta_7}{R^2}\bigl(\Pbb(\pi(v)) v\otimes\Pbb(\pi(v)) v + \Pbb(\pi(v)) w\otimes\Pbb(\pi(v)) w\bigr) \biggr). \label{divBbndg.4}
\end{align}
In this way, it is possible to establish the global existence and decay of solutions to \eqref{symthm3.1} from Theorem \ref{symthm} under the less restrictive assumptions \eqref{divBbndg.1}-\eqref{divBbndg.4}
for the term $ \Div \! B(t,v,w)$. 
\end{rem}

\begin{rem} \label{Gdecay}$\;$

\begin{enumerate}[(a)]
\item As discussed above in Remark \ref{Gestrem}, improvements to the decay estimate
for $\Pbb^\perp u$ can be made by imposing more restrictive assumptions on
the coefficients of \eqref{symivp.1}. For example:
\begin{enumerate}[(i)]
\item
If we assume that
\begin{gather} \label{improvedecayA}
\Pbb^\perp B_1=\Pbb^\perp B_2=0, \quad
\Pbb^\perp F_1=\Pbb^\perp F_2=0, \AND
\Pbb^\perp (B^0)^{-1}\Pbb =0, 
\end{gather}
then it is not difficult to verify from the proof of Proposition \ref{Pperpprop}
that $G$, as defined there, satisfies the estimate
\begin{equation*}
\norm{G}_{H^{k-1}}\leq C(\norm{u}_{H^k})\bigl(\norm{\Ft}_{H^{k-1}}+1\bigr).
\end{equation*}
Taken together, this estimate and the inequalities
\eqref{energy} and \eqref{PbbperplimBb} imply that
$\Pbb^\perp u$ satisfies
\begin{equation*}
\norm{\Pbb^\perp u(t_2) - \Pbb^\perp u(t_1)}_{H^{k-1}} \lesssim -t_1+t_2
\end{equation*}
for $T_0\leq t_1 < t_2<0$,
which, after sending $t_2\nearrow 0$, yields the improved decay estimate
\begin{equation}\label{improvedecayB}
\norm{\Pbb^\perp u(t)-\Pbb^\perp u(0)}_{H^{k-1}} \lesssim |t|,
\quad T_0\leq t < 0.
\end{equation}
\item On the other hand, if we make the less restrictive assumption
\begin{equation*} %\label{improvedecayC}
\Pbb^\perp B_1=\Pbb^\perp B_2=0     
\end{equation*}
instead of \eqref{improvedecayA}, then it can be seen from the proof of Proposition \ref{Pperpprop}
that $G$ will satisfy the estimate
\begin{equation*}
\norm{G}_{H^{k-1}}\leq C(\norm{u}_{H^k})\biggl(\norm{\Ft}_{H^{k-1}}+1
-\frac{1}{t}\norm{\Pbb u}_{H^{k-1}}^2\biggr).
\end{equation*}
Using this estimate in conjunction with \eqref{energy}, \eqref{PbbperplimBb}
and \eqref{HkdecayB},
it is not difficult to verify that $\Pbb^\perp u$ satisfies
\begin{equation*}
\norm{\Pbb^\perp u(t_2) - \Pbb^\perp u(t_1)}_{H^{k-1}} \lesssim
\begin{cases}  |t_2-t_1|
& \text{if $\zeta > 1$ } \\
  |t_2-t_1| +  (-t_1)^{2(\zeta-\sigma)}-(-t_2)^{2(\zeta-\sigma)}  &  \text{if $\zeta \leq 1$ }
 \end{cases}
\end{equation*}
for $T_0\leq t_1 < t_2 <0$.
Sending $t_2\nearrow 0$ then yields the decay estimate
\begin{equation*}% \label{improvedecayD}
\norm{\Pbb^\perp u(t)-\Pbb^\perp u(0)}_{H^{k-1}} \lesssim 
\begin{cases}  |t|
& \text{if $\zeta > 1$ } \\
  |t| +  |t|^{2(\zeta-\sigma)}  &  \text{if $\zeta \leq 1$ }
 \end{cases} , 
\quad T_0\leq t < 0,
\end{equation*}
which is an improvement over the one from Theorem \ref{symthm}, but
not as good as \eqref{improvedecayB} for $\zeta\leq 1$.
\end{enumerate}
\item It is also possible to achieve better decay rates for $\Pbb^\perp u$ under less restrictive assumptions if the norm used to measure the decay is of lower differentiability. For example, under no additional assumptions beyond those of Theorem \ref{symthm}, we have
by Proposition \ref{Pperpprop} that
\begin{equation*}
\norm{G}_{H^{k-2}}\leq C(\norm{u}_{H^{k-1}})\biggl(\norm{\Ft}_{H^{k-2}}+1
+\frac{1}{|t|^{\frac{1}{2}}}\norm{\Pbb u}_{H^{k-1}}-\frac{1}{t}
\norm{\Pbb u}_{H^{k-2}}\norm{\Pbb u}_{H^{k-1}}\biggr)
\end{equation*}
from which we obtain
\begin{equation*}
\norm{G}_{H^{k-2}}\leq C(\norm{u}_{H^k})\biggl(\norm{\Ft}_{H^{k-2}}+1
-\frac{1}{t}
\norm{\Pbb u}_{H^{k-1}}^2\biggr)
\end{equation*}
via an application of Young's inequality. Using this inequality
together with the version of the inequality \eqref{PbbperplimBb} where $k$ has been replaced
by $k-1$ gives
\begin{align*} 
\norm{\Pbb^\perp u(t_2) - \Pbb^\perp u(t_1)}_{H^{k-1}} \leq 
&\int_{t_1}^{t_2} C(\norm{u(\tau)}_{H^k})\biggl(\norm{\Ft(\tau)}_{H^k}+1-
\frac{1}{\tau}\norm{\Pbb u(\tau)}_{H^{k-1}}^2\biggr)
\, d\tau.  \label{PbbperplimBa}
\end{align*}
From this inequality and the estimates \eqref{energy}
and \eqref{HkdecayB}, we deduce that
 $\Pbb^\perp u$ satisfies
\begin{equation*}
\norm{\Pbb^\perp u(t_2) - \Pbb^\perp u(t_1)}_{H^{k-2}} \lesssim
\begin{cases}  |t_2-t_1|
& \text{if $\zeta > 1$ } \\
  |t_2-t_1| +  (-t_1)^{2(\zeta-\sigma)}-(-t_2)^{2(\zeta-\sigma)}  &  \text{if $\zeta \leq 1$ }
 \end{cases}
\end{equation*}
for $T_0\leq t_1 < t_2 <0$.
Sending $t_2\nearrow 0$ then yields decay estimate
\begin{equation*} 
\norm{\Pbb^\perp u(t)-\Pbb^\perp u(0)}_{H^{k-2}} \lesssim 
\begin{cases}  |t|
& \text{if $\zeta > 1$ } \\
  |t| +  |t|^{2(\zeta-\sigma)}  &  \text{if $\zeta \leq 1$ }
 \end{cases}, 
\quad T_0\leq t < 0,
\end{equation*}
which gives better decay rates compared to those of Theorem \ref{symthm} under the same assumption, but of course, measured with respect to the weaker norm $H^{k-2}$.
\end{enumerate}
\end{rem}

\subsection{Time transformations}
\label{sec:timetrafos}

By employing a time transformation of the form
\begin{equation}\label{tptrans}
\tau = -(-t)^p, \qquad 0< p \leq 1,
\end{equation}
it is possible to handle coefficients with more singular behavior than allowed by the assumptions from Section \ref{coeffassump}.
To see this, assume that the coefficients of \eqref{symivp.1} satisfy the following:
\begin{enumerate}[(i*)]
\item The section $\Pbb \in \Gamma(L(V))$ satisfies
\begin{equation*} %\label{Pbbprop}
\Pbb^2 = \Pbb,  \quad  \Pbb^{\tr} = \Pbb, \quad \del{t}\Pbb =0 \AND \nabla \Pbb =0.
\end{equation*}
\item There exist constants  $\kappa, \gamma_1, \gamma_2 >0$ such that the maps $B^0 \in 
C^1\bigl([T_0,0), C^\infty(B_R(V),L(V))\bigr)$ 
and $\Bc\in C^0\bigl([T_0,0], C^\infty(B_R(V),L(V))\bigr)$  satisfy
\begin{equation*}
\pi(B^0(t,v))=\pi(\Bc(t,v))=\pi(v), 
\end{equation*}
and
\begin{equation} \label{B0BCbndtrafo}
\frac{1}{\gamma_1} \text{id}_{V_{\pi(v)}} \leq  B^0(t,v)\leq \frac{1}{\kappa} \Bc(t,v) \leq \gamma_2 \textrm{id}_{V_{\pi(v)}}
\end{equation}
for  all $(t,v)\in [T_0,0)\times B_{R}(V)$. Moreover,
\begin{align} 
[\Pbb(\pi(v)),\Bc(t,v)] &= 0, \label{BcPbbcomtrafo}\\
(B^0(t,v))^{\tr} &= B^0(t,v), \\ %\label{B0sym}\\
\Pbb(\pi(v)) B^0(t,v)\Pbb^\perp(\pi(v)) &= \Ord\bigl(|t|^{\frac{p}{2}}+\Pbb(\pi(v)) v\bigr), %\label{B0bnd.2}
\intertext{and}
\Pbb^\perp(\pi(v)) B^0(t,v) \Pbb(\pi(v)) &= \Ord\bigl(|t|^{\frac{p}{2}}+\Pbb(\pi(v)) v\bigr)%\label{B0bnd.3}
\end{align}
for all $(t,v) \in [T_0,0) \times B_{R}(V)$,
and there exist maps $\Bt^0, \tilde{\Bc} \in C^0\bigl([T_0,0], \Gamma(L(V))\bigr)$
such that
\begin{align*}
[\Pbb,\tilde{\Bc}] &=0, \\% \label{BctPbbcom},\\
B^0(t,v)-\Bt^0(t,\pi(v)) &= \Ord(v), %\label{B0bnd.1} 
\intertext{and}
\Bc(t,v)-\tilde{\Bc}(t,\pi(v))&=\Ord(v) %\label{Bcbnd}
\end{align*}
for all $(t,v)\in  [T_0,0)\times B_R(V)$.

\item The map $F\in C^0\bigl([T_0,0), C^\infty(B_R(V),V)\bigr)$ can be expanded as
\begin{equation}\label{fexptrafo}
F(t,v) = |t|^{-(1-p)} \Ft(t,\pi(v)) + |t|^{-(1-p)}F_0(t,v) + |t|^{-(1-\frac{p}{2})}F_1(t,v) + |t|^{-1}F_2(t,v)
\end{equation}
where $\Ft \in C^0\bigl([T_0,0], \Gamma(V)\bigr)$, the maps
 $F_0,F_1,F_2 \in C^0\bigl([T_0,0], C^\infty(B_R(V),V)\bigr)$
 satisfy
 \begin{equation*}
 \pi (F_a (t,v))=\pi(v), \quad a=0,1,2,
 \end{equation*}
 and
\begin{equation*} 
\Pbb(\pi(v)) F_2(t,v) = 0 %\label{F2vanish} 
\end{equation*}
for all $(t,v)\in [T_0,0]\times B_R(V)$, and
there exist constants $\lambda_a\geq 0$, $a=1,2,3$, such that
\begin{align*}
F_0(t,v) &=\Ord(v),\\%  \label{F0bnd}\\
\Pbb(\pi(v)) F_1(t,v) &= \Ordc(\lambda_1 v),\\% \label{F1bnd.1}\\
\Pbb^\perp(\pi(v)) F_1(t,v) &= \Ordc(\lambda_2\Pbb(\pi(v)) v) %\label{F1bnd.2}
%\Ordc\biggl(\frac{\lambda_2}{\omega+|\Pbb^\perp u|}|\Pbb u|\biggr), 
\intertext{and}
\Pbb^\perp(\pi(v)) F_2(t,v) & = \Ordc\biggl(\frac{\lambda_3}{R}\Pbb(\pi(v)) v\otimes\Pbb(\pi(v))v \biggr)
%\Ordc\biggl(\frac{\lambda_3}{R}|\Pbb(\pi(v)) v|^2\biggr) %\label{F2bnd.3}
\end{align*}
for all $(t,v)\in  [T_0,0)\times B_R(V)$.

\item The  map $B\in C^0\bigl([T_0,0), C^\infty(B_R(V),L(V)\otimes T\Sigma)\bigr)$ satisfies
\begin{equation*}
\pi(B(t,v))=\pi(v)
\end{equation*}
and
\begin{equation*}
\bigl[\sigma(\pi(v))(B(t,v))\bigr]^{\tr}=\sigma(\pi(v))(B(t,v))
\end{equation*}
for all $(t,v)\in [T_0,0)\times B_R(V)$ and $\sigma \in \mathfrak{X}^*(\Sigma)$. 
Moreover, $B$ can be expanded as
\begin{equation*} %\label{Bexp}
B(t,v) = |t|^{-(1-p)}B_0(t,v) + |t|^{-(1-\frac{p}{2})}B_1(t,v) + |t|^{-1}B_2(t,v)
\end{equation*}
where  $B_0,B_1,B_2 \in C^0\bigl([T_0,0], C^\infty(B_R(V),L(V)\otimes T^*\Sigma)\bigr)$ satisfy
\begin{equation*}
\pi(B_a(t,v))=\pi(v)
\end{equation*}
for all  $(t,v)\in [T_0,0]\times B_R(V)$, and
there exist a constant $\alpha\geq 0$ and a map $\Bt_2\in  C^0\bigl([T_0,0], \Gamma(L(V)\otimes T^*\Sigma)\bigr)$  such that
\begin{align*} 
\Pbb(\pi(v)) B_1(t,v) \Pbb(\pi(v)) &=   \Ord(1), \\
\Pbb(\pi(v)) B_1(t,v) \Pbb^\perp(\pi(v))&=  \Ordc(\alpha),\\
 \Pbb^\perp(\pi(v)) B_1(t,v) \Pbb(\pi(v)) &= \Ordc(\alpha), \\
\Pbb^\perp(\pi(v)) B_1(t,v) \Pbb^\perp(\pi(v)) &= \Ord(\Pbb(\pi(v)) v),\\
\Pbb(\pi(v)) B_2(t,v) \Pbb^\perp(\pi(v)) &= \Ord(\Pbb(\pi(v)) v),\\
\Pbb^\perp(\pi(v)) B_2(t,v)\Pbb(\pi(v)) &=  \Ord(\Pbb(\pi(v)) v),\\
\Pbb^\perp(\pi(v)) B_2(t,v) \Pbb^\perp(\pi(v))  &= \Ord\bigl(\Pbb(\pi(v)) v\otimes \Pbb(\pi(v)) v \bigr)\\
\intertext{and}
\Pbb(\pi(v))(B_2(t,v)-\Bt_2(t,\pi(v)))\Pbb(\pi(v)) &= \Ord(v)
\end{align*}
for all $(t,v)\in  [T_0,0)\times B_R(V)$.

\item  There exist constants $\theta\geq 0$, and $\beta_a \geq 0$, $a=0,1,\ldots,7$, such that the map
\begin{equation*}
\Div\! B \: : \: [T_0,0)\times B_R(V\otimes V\otimes T^*\Sigma)  \longrightarrow L(V)
\end{equation*} 
defined locally by
\begin{align*} 
&\Div\!  B(t,x,v,w) = \del{t}B^0(t,x,v)+  D_v B^0(t,x,v)\cdot (B^0(t,x,v))^{-1}\Bigl[-D_v B^i(t,x,v)\cdot w_i \notag \\
&\quad+
 \frac{1}{t}\Bc(t,x,v)\Pbb(x) v + F(t,x,v)
\Bigr]
+ \del{i}B^i(t,x,v)+ D_v B^i(t,x,v)\cdot (w_i-\omega_i(x) v) \notag \\
&\quad + \Gamma^i_{ij}(x)B^j(t,x,v)+\omega_i(x)B^i(t,x,v)-B^i(t,x,v)\omega_i(x),
\end{align*}
where $v=(v^J)$, $w=(w_i)$, $w_i=(w_i^J)$, $\omega_i=(\omega_{iI}^J)$, and $B^i=(B^{iJ}_I)$,
satisfies
\begin{align*}
\Pbb(\pi(v)) \Div \! B(t,v,w) \Pbb(\pi(v)) &= 
\Ordc\bigl(|t|^{-(1-p)}\theta+  |t|^{-(1-\frac{p}{2})}\beta_0 + |t|^{-1}\beta_1\bigl), \\
\Pbb(\pi(v))  \Div\! B(t,v,w) \Pbb^\perp(\pi(v)) &= 
\Ordc\biggl(|t|^{-(1-p)}\theta+|t|^{-(1-\frac{p}{2})}\beta_2
+ \frac{|t|^{-1}\beta_3}{R}\Pbb(\pi(v)) v\biggr), \\
\Pbb^\perp(\pi(v)) \Div\! B(t,v,w) \Pbb(\pi(v))&= 
\Ordc\biggl(|t|^{-(1-p)}\theta+|t|^{-(1-\frac{p}{2})}\beta_4
+ \frac{|t|^{-1}\beta_5}{R}\Pbb(\pi(v)) v\biggr) % \label{divBbnd.3}
\intertext{and}
\Pbb^\perp(\pi(v)) \Div\! B(t,v,w) \Pbb^\perp(\pi(v))& =\Ordc\biggl(|t|^{-(1-p)}\theta+
\frac{|t|^{-(1-\frac{p}{2})}\beta_6}{R}\Pbb(\pi(v)) v
+ \frac{|t|^{-1}\beta_7}{R^2}\Pbb(\pi(v)) v\otimes\Pbb(\pi(v)) v \biggr). %\label{divBbnd.4}
\end{align*}
\end{enumerate}

Now, a straightforward computation shows that under the time transformation \eqref{tptrans} equation \eqref{symivp.1}
transforms as
\begin{equation}\label{symivpb}
\Bb^0(\tau,\ub)\del{\tau} \ub + \Bb^i(\tau,\ub)\nabla_i \ub = \frac{1}{\tau} \bar{\Bc}(\tau,\ub)\Pbb \ub + \Fb(\tau,\ub)
\end{equation}
where
\begin{align*}
\ub(\tau,x) &= u\bigl(-(-\tau)^\frac{1}{p},x\bigr), \\
\Bb^0(\tau,\ub) &= B^0\bigl(-(-\tau)^\frac{1}{p},\ub\bigr),\\
\Bb^i(\tau,\ub) &= \frac{(-\tau)^{\frac{1-p}{p}}}{p}B^i\bigl(-(-\tau)^\frac{1}{p},\ub\bigr), \\ 
\bar{\Bc}(\tau,\ub) &= \frac{1}{p} \Bc\bigl(-(-\tau)^\frac{1}{p},\ub\bigr)
\intertext{and}
\Fb(\tau,\ub) &= \frac{(-\tau)^{\frac{1-p}{p}}}{p}F\bigl(-(-\tau)^\frac{1}{p},\ub\bigr),
\end{align*}
while $\Div\! B$ transforms as
\begin{equation*}
 \frac{(-\tau)^{\frac{1-p}{p}}}{p}\Div\! B\bigl(-(-\tau)^\frac{1}{p},x,v,w\bigr)
 = \Div\! \Bb(\tau,x,v,w).
 \end{equation*}
Using these formulas, it is then not difficult to see that the assumptions (i*)-(v*) imply that \eqref{symivpb}
 satisfies assumption (i)-(v) for an appropriate choice of constants. Thus, under the assumptions (i*)-(v*), we can apply 
 Theorem \ref{symthm} to \eqref{symivpb} to obtain the existence of solutions to \eqref{symivp.1} all the way up to the singular time 
 $\tau=0$ for a suitably small choice of initial data and under the condition $\sup_{T_0 \leq \tau < 0}\norm{\Ft(\tau)}_{H^k}\le\delta$.
 In fact, we can show that if the constants found from assumptions (i*)-(v*) for the original system \eqref{symivp.1} satisfy \eqref{eq:symhypkappa}, then the corresponding constants for the transformed system \eqref{symivpb} also satisfy \eqref{eq:symhypkappa}. Moreover
the decay estimates implied by the theorem for solutions of \eqref{symivpb} take the following form for the corresponding solutions of the original system
\begin{align}
\label{eq:asymptoticstrafo1}
\norm{\Pbb u(t)}_{H^{k-1}} &\lesssim \begin{cases}
|t|^p+(\lambda_1+\alpha)|t|^{\frac{p}{2}} & \text{if $\zeta > p$} \\
|t|^{\zeta-\sigma}+(\lambda_1+\alpha)|t|^{\frac{p}{2}} & \text{if $\frac{p}{2} < \zeta \leq p$} \\
|t|^{\zeta-\sigma}   & \text{if $0 < \zeta \leq \frac{p}{2}$}\end{cases}
\intertext{and}
\label{eq:asymptoticstrafo2}
\norm{\Pbb^\perp u(t) - \Pbb^\perp u(0)}_{H^{k-1}} &\lesssim
 \begin{cases}  |t|^{p/2}+ |t|^{\zeta-\sigma}
& \text{if $\zeta > p/2$} \\
 |t|^{\zeta-\sigma}  &  \text{if $\zeta \leq p/2 $ }
 \end{cases},
\end{align}
where the constants $\lambda_1$ and $\alpha$ are found as in assumptions (i*)-(v*) for the original system \eqref{symivp.1}, and $\zeta$ and $\sigma$ are defined as in Theorem~\ref{symthm} in terms of the constants found for the original system. Remark~\ref{Gdecay} can be easily modified to apply here as well.

\subsection{Fuchsian heuristics}
The point of the following brief discussion is to develop some intuition for Fuchsian systems and what to expect from Theorem~\ref{symthm}.
This \emph{Fuchsian heuristics}  is largely based on the idea that, under suitable assumptions, the asymptotics of systems like \eqref{symivp.1} should be determined by an associated linear ODE of the form
  \begin{equation}
    \label{eq:heuristicsexamplerprobl1}
    \del{t}u  = \frac{1}{t}\tilde{\Bc} u + F
  \end{equation}
where
\begin{equation*}
  %\label{eq:heuristicsexamplerprobl2}
  F=|t|^{-(1-p)} \Ft(t), \quad 0<p\leq 1,
\end{equation*}
and $\Ft\in C^0([-1,0])$. By assumption, there are no spatial derivatives, and consequently, we can assume for the purpose of this argument that all the fields are constant on the spatial manifold $\Sigma$ (i.e., in local coordinates, they are independent of the spatial coordinates $x$). 

In order to obtain a useful example that will illustrate the general situation, we restrict ourselves, in the following,
to unknowns $u$
in \eqref{eq:heuristicsexamplerprobl1} with two components. We will be interested in the situation where one component has a limit, possibly non-zero, as $t\nearrow 0$ while the other decays to $0$ at some fixed rate. To this end, we further assume that
\begin{equation}
  \label{eq:heuristicsexamplerprobl3b}
\tilde{\Bc}=
\begin{pmatrix}
  0 & 0\\
  0 & a
\end{pmatrix}
\end{equation}
for some $a>0$. Then, fixing initial data
\begin{equation} \label{eq:heuristicsexamplerprobl3a}
u(-1)=(u_*,u_{**})^{\tr},
\end{equation}
it is straightforward to integrate 
\eqref{eq:heuristicsexamplerprobl1} to obtain the
(unique) solution given by
\begin{equation}
  \label{eq:heuristicsexsol}
  u(t)=\begin{pmatrix}
    u^1(t)\\
    u^2(t)
  \end{pmatrix}=
  \begin{pmatrix}
    u_*+\int_{-1}^t |s|^{-1+p} \Ft^{1}(s) ds\\
    (-t)^a\bigl(u_{**}+\int_{-1}^t |s|^{-1+p-a} \Ft^{2}(s) ds\bigr)
  \end{pmatrix}, \quad t\in [-1,0).
\end{equation}
From this formula, we see immediately that $\lim_{t\nearrow 0} u^1(t)$, denoted $u^1(0)$, exists (since $p>0$) and
is given by
\begin{equation}
  \label{eq:heuristicsexestA}
  u^1(0)=u_*+\int_{-1}^0 |s|^{-1+p} \Ft^{1}(s) ds,
\end{equation}
and the decay estimates
\begin{equation}
  \label{eq:heuristicsexestB}
  |u^1(t)-u^1(0)|\lesssim |t|^p \AND |u^2(t)|\lesssim |t|^p+|t|^a
\end{equation}
hold. This shows that if the source term $F$ is not too singular (i.e., if $p\ge a$), then \emph{the solutions of \eqref{eq:heuristicsexamplerprobl1} behave like powers of $|t|$ where these powers are the eigenvalues of the {principal matrix} \eqref{eq:heuristicsexamplerprobl3b}}. On the other hand, if the source term is very singular  (i.e., if $p< a$), then there are $|t|^p$-''corrections'' to these decay rates. 

We now claim that the optimal decay rates \eqref{eq:heuristicsexestB} of the above solutions can, up to an arbitrarily small loss, be inferred from 
Theorem~\ref{symthm}. To see this, we set
\begin{equation}
  \label{eq:heuristicsexamplerprobl4}
\Bc=
\begin{pmatrix}
  a & 0\\
  0 & a
\end{pmatrix}\AND \Pbb=\begin{pmatrix}
  0 & 0\\
  0 & 1
\end{pmatrix}.
\end{equation}
We then have
\begin{equation}
  \label{eq:heuristicsexamplerprobl3}
\tilde{\Bc}= \Bc\Pbb,
\end{equation}
which we can use to write
\eqref{eq:heuristicsexamplerprobl1} as
\begin{equation}
    \label{eq:heuristicsexamplerprobl5}
    \del{t}u  = \frac{1}{t}\Bc\Pbb u + F.
\end{equation}
With a view to applying Theorem \ref{symthm} to this equation,
we observe from \eqref{eq:heuristicsexamplerprobl4} that the
optimal (i.e. largest) choice for $\kappa$, see \eqref{B0BCbndtrafo}, is\footnote{Note here that $B^0=\id$.} $\kappa=a$. We further observe we can choose $\gamma_1=\gamma_2=1$ in \eqref{B0BCbndtrafo}, and zero for all other constants found from assumptions (i*)-(v*) in Section~\ref{sec:timetrafos}. With these choices, \eqref{eq:symhypkappa} is satisfied, and so, as discussed in Section~\ref{sec:timetrafos}, all the hypothesises of Theorem~\ref{symthm} have been satisfied. We can, therefore, conclude from Theorem \ref{symthm} the existence
of a unique solution $u(t)$, $t\in [-1,0)$, that satisfies
the initial conditions \eqref{eq:heuristicsexamplerprobl3a}
and for which the limit $\lim_{t\nearrow 0} \Pbb^\perp u(t)$, denoted $\Pbb^\perp u(0)$, exists. By uniqueness, this solution must coincide with one
given above by
\eqref{eq:heuristicsexsol}. By Remark~\ref{Gdecay} (case (a)(i)), we also conclude from Theorem \ref{symthm} that
the decay rates of the solution $u(t)$ are given by
\begin{equation*}
%\label{eq:asymptoticstrafo1}
|\Pbb u(t)| \lesssim \begin{cases}
|t|^p & \text{if $a > p$} \\
|t|^{a-\sigma} & \text{if $0 < a \leq p$} 
\end{cases} \AND
|\Pbb^\perp u(t) - \Pbb^\perp u(0)| \lesssim
   |t|^p.
\end{equation*}
Up to the (arbitrarily) small loss given by the constant $\sigma>0$, these decay rates are fully consistent with the optimal rates given by \eqref{eq:heuristicsexestB}. 

As a final remark, there are many ways we could have chosen $\Bc$ and $\Pbb$ to get \eqref{eq:heuristicsexamplerprobl3}. 
The above choice for $\Pbb$ is motivated by the expectation from the decay estimates \eqref{eq:asymptoticstrafo1}-\eqref{eq:asymptoticstrafo2} and \eqref{eq:heuristicsexestA}-\eqref{eq:heuristicsexestB} that $\Pbb u=u^2$ should decay to zero and $\Pbb^\perp u=u^1$ should have a non-zero limit at $t=0$. Given this, a general constant ansatz for $\Bc$ is then reduced to diagonal form by \eqref{eq:heuristicsexamplerprobl3} and \eqref{BcPbbcomtrafo}.

\section{Applications\label{applications}}

We now turn to three applications of Theorem \ref{symthm}. The first two involve systems of semilinear wave equations near spatial infinity on Minkowski and Schwarzschild spacetimes, while
the third involves perfect fluids with Gowdy symmetry on Kasner spacetimes near the big bang singularity. As discussed in the introduction, a special case of Theorem \ref{symthm}, i.e. \cite[Theorem B.1]{Oliynyk:CMP_2016}, has already been used to establish the global existence of solutions to the Einstein-Euler equations that represent non-linear perturbations of FLRW spacetimes and also the global existence of
solutions to the Euler equations on FLRW spacetimes
\cite{LeFlochWei:2015,LiuOliynyk:2018b,LiuOliynyk:2018a,LiuWei:2019,Oliynyk:CMP_2016,Wei:2018}.

\subsection{Wave equations on Minkowski spacetime near spatial infinity}

\subsubsection{The cylinder at spatial infinity}
The \textit{cylinder at spatial infinity} construction was first introduced by Friedrich  in \cite{Friedrich:JGP_1998} as a tool to
analyze the behavior of his conformal  version of the Einstein field equations, see \cite{Friedrich1981,Friedrich:1986,Friedrich:1991}, near spatial infinity.  
In this section, we use Friedrich's cylinder at spatial infinity construction in conjunction with Theorem \ref{symthm} to obtain the existence of solutions to a class of semilinear wave equations near spatial infinity in Minkowski spacetime. For other applications of the cylinder at spatial
infinity construction to linear wave and spin-2 equations on Minkowski spacetime, see \cite{Beyer_et_al:2014,DoulisFrauendiener:2013,FrauendienerHennig:2014,Friedrich:2002,MacedoKroon:2018}.

Following \cite{FrauendienerHennig:2014}, we employ the cylinder at spatial infinity representation of Minkowski spacetime, that is the spacetime consisting of the manifold
\begin{equation*}
M = (0,2)\times (0,\infty) \times \mathbb{S}^2
\end{equation*}
and the (conformal) Lorentzian metric
\begin{equation} \label{Mgdef}
g= -dt\otimes dt + \frac{1-t}{r}(dt\otimes dr + dr\otimes dt) +\frac{(2-t)t}{r^2}dr\otimes dr +\gsl
\end{equation}
where $(t,r)\in (0,2)\times (0,\infty)$ and
\begin{equation}\label{gsldef}
\gsl = d\theta \otimes d\theta + \sin^2(\theta) d\phi \otimes d\phi
\end{equation}
is the canonical metric on the $2$-sphere $\mathbb{S}^2$. 
Here, we use\footnote{In this and the next section, we will employ lower case Greek letters, e.g. $\mu$, $\nu$, $\gamma$, to label spacetime 
coordinate indices that run from $0$ to $3$, while upper case Greek letter, e.g. $\Lambda$, $\Sigma$, $\Gamma$, will be reserved to label spherical
coordinate indices that run from 2 to 3, i.e. $(x^\Lambda)=(x^2,x^3) :=(\theta,\phi)$.}
\begin{equation*}
(x^\mu) = (x^0,x^1,x^2,x^3) = (t,r,\theta,\phi)
\end{equation*}
to label the coordinates. The importance of the metric \eqref{Mgdef} is that
\begin{equation} \label{Mctrans}
\gt = \Omega^2 g,
\end{equation}
where
\begin{equation}\label{MOmegadef}
\Omega = \frac{1}{r(2-t)t},
\end{equation}
defines a flat Lorentzian metric on $M$, i.e., $(M,\gt)$ is a part of Minkowski space. The flatness of $\gt$ can be seen either by a direct calculation using the formulas \eqref{Mgdef}, \eqref{Mctrans} and \eqref{MOmegadef}, or  by letting
\begin{equation*}
(\xb^\mu) = (\xb^0,\xb^1,\xb^2,\xb^3)=(\tb,\rb,\theta,\phi)
\end{equation*}
denote spherical coordinates on $\Rbb^4$ and noting that
\begin{equation} \label{psibdef}
\psib \: :\: \Mb  \longrightarrow M \: : \: (\xb^\mu) = (\tb,\rb,\theta,\phi) \longmapsto (x^\mu) = \biggl(1-\frac{\tb}{\rb},\frac{\rb}{-\tb^2+\rb^2},\theta,\phi\biggr)
\end{equation}
defines a diffeomorphism from the manifold\footnote{Geometrically, $\Mb$ is the interior of the spacelike cone with vertex at the origin in $\Rbb^4$.}
\begin{equation*}
\Mb = \{ \, (\tb,\rb,(\theta,\phi)) \in (-\infty,\infty)\times (0,\infty)\times \mathbb{S}^2 \: |\: -\tb^2+\rb^2 > 0 \, \}
\end{equation*}
to $M$ that satisfies
\begin{equation*}% \label{gt2gb}
\psib_* \gb = \gt
\end{equation*}
where
\begin{equation*}
\gb =  -d\tb\otimes d\tb + d\rb \otimes d\rb + \rb^2\gsl
\end{equation*}
is the Minkowski metric on $\Rbb^4$ in spherical coordinates.  

The diffeomorphism \eqref{psibdef} defines a compactification of null rays in $\Mb$. Decomposing the boundary of $M$ as
\begin{equation*}
\del{}M = \Isc^+ \cup i^0 \cup \Isc^{-},
\end{equation*}
where
\begin{equation*}
\Isc^{+} = \{0\}\times (0,\infty) \times \mathbb{S}^2, \quad  \Isc^{-} = \{2\}\times (0,\infty) \times \mathbb{S}^2 \AND i^0 = [0,2]\times \{0\} \times \mathbb{S}^2,
\end{equation*}
the compatification defined by \eqref{psibdef} leads to the interpretation of $\Isc^{\pm}$ as portions of ($+$) future and ($-$) past null-infinity, respectively, and $i^0$
as spatial infinity.
We further note that  the spacelike hypersurface 
\begin{equation*}
\Sigma = \{1\} \times (0,\infty) \times \mathbb{S}^2 \subset M
\end{equation*}
corresponds to the constant time hypersurface
\begin{equation*}
\Sigmab = \{ \, (\xb^{\mu})\in \Rbb^4 \: |\: \gb_{\mu\nu}\xb^{\mu}\xb^{\nu} > 0,\; \xb^0=0\, \}\subset \Mb \subset \Rbb^4
\end{equation*}
in Minkowski spacetime.

For use below, we observe that the Ricci scalar curvature of $\gt$ satisfies
\begin{equation} \label{MRtdef}
\Rt =0
\end{equation}
by virture of $\gt$ being flat. The same is also true for the Ricci scalar curvature of the metric $g$, that is, 
\begin{equation}\label{MRdef}
R =0,
\end{equation}
as can be verified via a straightforward calculation using \eqref{Mgdef}.

\subsubsection{Semilinear wave equations}
The class of semilinear wave equations on Minkowski spacetime that we consider are systems of $N$-coupled scalar wave equations
of the form
\begin{equation}  
\gb^{\alpha\beta}\nablab_\alpha \nablab_\beta \ub^K = q_{IJ}^K(\ub^L)\gb^{\alpha\beta}\nablab_\alpha \ub^I \nablab_\beta \ub^J \label{Mbwave} 
\end{equation}
where $q_{IJ}^K \in C^\infty(\Rbb^N)$, $\nablab$ is the Levi-Civita connection of
$\gb$, and in line with our notation from Section \ref{Vbundle},
upper case Latin indices, e.g. $I,J,K$, run from $1$ to $N$.
We will be interested in solving this type of system of wave equations on domains of the form
\begin{equation} \label{Mbrb0def}
\Mb_{\rb_0} = \{ \, (\tb,\rb) \in (0,\infty)\times (0,\infty) \: |\: -\tb + \rb > \rb_0 \, \} \times \mathbb{S}^2,\qquad \rb_0>0.
\end{equation}

\begin{rem}$\;$
\begin{enumerate}[(i)]
\item Because the nonlinear terms $q_{IJ}^K(\ub^L)\gb^{\alpha\beta}\nablab_\alpha \ub^I \nablab_\beta \ub^J$
satisfy the null condition of Klainerman, global
existence results, under a small initial data condition, for systems of wave equation of the form
\eqref{Mbwave} have been known since the pioneering  work of Klainerman \cite{Klainerman:1980} and Christodoulou \cite{Christodoulou:1986}.
In this sense, the results that we present are not new. However, the method we use to establish global existence on regions of the form \eqref{Mbrb0def} is new and we 
believe it brings a valuable 
new perspective to global existence problems for systems of nonlinear wave equations. The analysis carried out here is also of interest because it
demonstrates the utility of Friedrich's cylinder at spatial infinity construction for solving nonlinear wave equations near spatial infinity. 
\item The analysis carried out here on Minkowski spacetimes can be generalized to systems of wave equations satisfying
the weak-null condition. We will report on this work in a separate article \cite{OliynykOlvera:2020}. 
\end{enumerate}
\end{rem}

Rather than attempt to solve \eqref{Mbwave} directly, we use the diffeomorphism \eqref{psibdef} to push \eqref{Mbwave} forward to obtain
the system
\begin{equation}  
\gt^{\alpha\beta}\nablat_\alpha \nablat_\beta \ut^K = q^K_{IJ}(\ut^L)\gt^{\alpha\beta}\nablat_\alpha \ut^I \nablat_\beta \ut^J \label{Mtwave}
\end{equation}
on the domain
\begin{equation} \label{Mr0def}
M_{r_0} :=\psib(\Mb_{\rb_0}) = \biggl\{ \, (t,r) \in (0,1)\times (0,r_0) \: \biggl|\: t >2- \frac{r_0}{r} \, \biggr\} \times \mathbb{S}^2, \qquad r_0 = \frac{1}{\rb_0},
\end{equation}
where  $\nablat$ is the Levi-Civita connection of $\gt$ and $\ut^K$ is related to $\ub^K$ via
\begin{equation} \label{ub2ut}
\ut^K = \psib_* \ub^K.
\end{equation}
Due to the invertibility of the diffeomorphism \eqref{psibdef}, the relation \eqref{ub2ut} establishes an equivalence between the wave equations  \eqref{Mbwave} on $\Mb_{\rb_0}$ and \eqref{Mtwave}
on $M_{r_0}$.
We are therefore free to restrict our attention to wave equations of the form \eqref{Mtwave} on $M_{r_0}$.
We further note that the constant time hypersurface 
\begin{equation} \label{Sigmabrb0def}
\Sigmab_{\rb_0} = \{0\}\times (\rb_0,\infty)\times \mathbb{S}^2,
\end{equation}
which forms the bottom of $\Mb_{\rb_0}$, gets mapped via the diffeomorphism \eqref{psibdef} to the constant time hypersurface
\begin{equation*}
\Sigma_{r_0} := \psib(\Sigmab_{\rb_0})= \{1\}\times (0,r_0)\times \mathbb{S}^2
\end{equation*}
that forms the top of $M_{r_0}$.

Using 
\begin{equation*}
\ft^K =  q^K_{IJ}(\ut^L)\gt^{\alpha\beta}\nablat_\alpha \ut^I \nablat_\beta \ut^J
\end{equation*}
to denote the nonlinear terms that appear in \eqref{Mtwave}, it follows immediately from
\eqref{MRtdef}-\eqref{MRdef} and the formulas \eqref{utrans}-\eqref{wavetransC}  and \eqref{ftransB}-\eqref{ftransC}, with $n=4$, from Appendix \ref{ctrans}
that the wave equations \eqref{Mtwave} transform, under the conformal transformation \eqref{Mctrans}, into
\begin{equation} \label{MwaveA}
g^{\alpha\beta}\nabla_\alpha \nabla_\beta u^K = f^K
\end{equation}
where $\nabla$ is the Levi-Civita connection of $g$,
\begin{equation*} %\label{Mudef}
\ut^K = rt(2-t)u^K
\end{equation*}
and
\begin{align*}
f^K = &q_{IJ}^K\bigl(rt(2-t)u^L\bigr)\biggl( rt(2-t)g^{\mu\nu}\nabla_\mu u^I\nabla_\nu u^J \\
&+2g^{\mu\nu}\nabla_\mu\bigl( rt(2-t)\bigr) \nabla_\nu u^{(I} u^{J)}
+ \frac{1}{rt(2-t)}g^{\mu\nu}\nabla_\mu\bigl(rt(2-t)\bigr) \nabla_\nu(rt(2-t)) u^I u^J\biggr).
\end{align*}
A routine computation using \eqref{Mgdef} then shows that conformal wave equations \eqref{MwaveA} can be expressed as
\begin{equation} \label{MwaveB}
(-2+t)t\del{t}^2 u^K + r^2 \del{r}^2 u^K + 2r(1-t)\del{r}\del{t}u^K + \gsl^{\Lambda\Sigma}\nablasl{\Lambda}\nablasl{\Sigma} u^K + 2(t-1)\del{t}u^K = f^K
\end{equation}
where $\nablasl{\Lambda}$ is the Levi-Civita connection of the metric \eqref{gsldef} on $\mathbb{S}^2$ and 
\begin{align} 
f^K = &q^K_{IJ}\bigl(rt(2-t)u^L\bigr)\biggl(2r(2-2t+t^2) u^{(I} r\del{r}u^{J)}-r(2-t)^2 t^2\del{t}u^I\del{t}u^J+r(2-t)t r\del{r}u^I r\del{r}u^J \notag \\
&+2 r(2-3t+t^2)t(\del{t}u^{(I}r\del{r}u^{J)}-\del{t}u^{(I}u^{J)})+r(2-t)t \gsl{}^{\Lambda\Sigma}\nablasl{\Lambda} u^I \nablasl{\Sigma} u^J + r(2-t)t u^I u^J\biggl). \label{MfA}
\end{align}

To proceed, we define new variables $U_0^J$, $U_1^J$, $U_2^J$, $U_3^J$ and $U_4^J$ by setting
\begin{align*}
U_0^J = t^{\lambda + \frac{1}{2}} \del{t}u^J, \quad %\label{U0def}\\
U_1^J = t^{\lambda} r\del{r}u^J, \quad %\label{U1def} \\
U_\Lambda^J  = t^{\lambda} \nablasl{\Lambda} u^J %\label{Uidef}
\AND
%\intertext{and}
U_4^J &= t^{\lambda-\frac{1}{2}}u^J% \label{U5def}
\end{align*}
where $\lambda\in \Rbb$ is a constant to be fixed later.
Using these variables, we can, with the help of \eqref{MfA}, write the conformal system of wave equations \eqref{MwaveB}
in first order form as follows:
\begin{equation} \label{MwaveC}
B^0 \del{t}U +  B^1 r\del{r}U + B^\Gamma \nablasl{\Gamma} U = \frac{1}{t}\Bc U + F  
\end{equation} 
where
\begin{align}
U &= \begin{pmatrix} U^J_0\\ U^J_1 \\ U^J_\Sigma \\ U^J_4\end{pmatrix}, \label{MUdef}\\[0.3cm]
B^0 &= \begin{pmatrix}
(2-t)\delta^K_J & 0 & 0 & 0\\
0 & \delta^K_J & 0 & 0 \\
0 & 0 & \delta^{\Sigma}_\Lambda\delta^K_J & 0\\
0 & 0 & 0 & \delta^K_J
\end{pmatrix}, \label{MB0def}\\[0.3cm]
B^1 &= \begin{pmatrix}
\begin{displaystyle}\frac{2(t-1)}{t}\delta^K_J \end{displaystyle} & -\begin{displaystyle}\frac{1}{t^{\frac{1}{2}}}\delta^K_J\end{displaystyle} & 0 & 0\\
 -\begin{displaystyle}\frac{1}{t^{\frac{1}{2}}}\delta^K_J\end{displaystyle} & 0 & 0 & 0 \\
0 & 0 & 0 & 0\\
0 & 0 & 0 & 0
\end{pmatrix}, \label{MB1def}\\[0.3cm]
B^\Gamma &= \begin{pmatrix}
0 & 0 & -\begin{displaystyle}\frac{1}{t^{\frac{1}{2}}}\end{displaystyle}  \gsl^{\Gamma\Sigma}\delta^K_J & 0\\
0 & 0 & 0 & 0 \\
-\begin{displaystyle}\frac{1}{t^{\frac{1}{2}}}\end{displaystyle}  \delta^\Gamma_\Lambda \delta^K_J & 0 & 0 & 0\\
0 & 0 & 0 & 0
\end{pmatrix}, \label{MBKdef}\\[0.3cm]
\Bc &= \begin{pmatrix}
\begin{displaystyle}2\biggl(\lambda-\frac{1}{2}\biggr)\delta^K_J \end{displaystyle}& 0 & 0 & 0 \\ 
0 &\lambda\delta^K_J & 0 & 0 \\
0 & 0 & \lambda \delta^\Sigma_\Lambda\delta^K_J & 0 \\
\delta^K_J & 0 & 0 &\begin{displaystyle} \biggl(\lambda-\frac{1}{2}\biggr) \delta^K_J\end{displaystyle}
\end{pmatrix}\label{MBcdef}
\end{align}
and
\begin{equation}\label{MFdef}
F = \begin{pmatrix} F_0^K \\ 0 \\ 0 \\ 0\end{pmatrix}
\end{equation}
with
\begin{align*}
F_0^K = \biggl(\frac{3}{2}-\lambda \biggr)U^K_0 +\frac{1}{t}\biggl[&- t^{\frac{1}{2}} U^K_1 + q^K_{IJ}\bigl(r(2-t)t^{\frac{3}{2}-\lambda}U^L_4\bigr)\Bigl(
-2r(2-2t+t^2)t^{1-\lambda}U^{(I}_4 U_1^{J)} + \\
 r(t-2)^2& t^{\frac{3}{2}-\lambda}U_0^I U_0^J+r(t-2) t^{\frac{3}{2}-\lambda}U_1^I U_1^J
+2r(2-3t+t^2)\bigl( t^{\frac{3}{2}-\lambda}U_4^{(I} U_0^{J)} \\
&- t^{1-\lambda}U_0^{(I} U_1^{J)}\bigr)+r(t-2)t^{\frac{5}{2}-\lambda}
U_4^I U_4^J
+ r(t-2)t^{\frac{3}{2}-\lambda}\gsl^{\Lambda\Sigma}U_\Lambda^I U_\Sigma^J
\Bigr)\biggr].
\end{align*}
We further note that \eqref{MFdef} can be expressed as
\begin{equation}\label{MFrep}
F = \begin{pmatrix}\biggl(\frac{3}{2}-\lambda \biggr)U^K_0  \\ 0 \\ 0 \\ 0\end{pmatrix}
+\frac{1}{t}\bigl(t^{\frac{1}{2}}\Csc+\Bsc\bigr)U
\end{equation}
where 
\begin{equation*}
\Csc = \begin{pmatrix}0 & -\delta^K_J & 0& 0\\
0 & 0& 0& 0\\
0 & 0& 0& 0\\
0 & 0& 0& 0
\end{pmatrix}
\end{equation*}
and
\begin{equation*}%\label{Bscdef}
\Bsc=  \begin{pmatrix}
\Bsc_{J}^{0K} & \Bsc_{J}^{1K}   & \Bsc^{\Sigma K}_J & \Bsc_{J}^{4K}   \\ 
0 & 0 & 0 & 0 \\
0 & 0 & 0 & 0 \\
0 & 0 & 0 & 0
\end{pmatrix}
\end{equation*}
with
\begin{align*}
\Bsc_{J}^{0K} &= q^K_{PQ}\bigl(r(2-t)t^{\frac{3}{2}-\lambda}U_4^L\bigr)\delta_I^{(P}\delta_J^{Q)}\bigl[ r(t-2)^2 t^{\frac{3}{2}-\lambda}U_0^I-2r(2-3t+t^2) t^{1-\lambda}U_1^I\bigr],\\
\Bsc_{J}^{1K} &=   q^K_{PQ}\bigl(r(2-t)t^{\frac{3}{2}-\lambda}U_4^L\bigr)\delta_I^{(P}\delta_J^{Q)}\bigl[-2r(2-2t+t^2)t^{1-\lambda}U_4^I+r(t-2) t^{\frac{3}{2}-\lambda}U_1^I\bigr],\\
\Bsc^{\Sigma K}_J &=  q^K_{PQ}\bigl(r(2-t)t^{\frac{3}{2}-\lambda}U_4^L\bigr)\delta_I^{(P}\delta_J^{Q)} r(t-2)t^{\frac{3}{2}-\lambda}\gsl^{\Lambda\Sigma}U^I_\Lambda
\intertext{and}
\Bsc_{J}^{4K} &= q^K_{PQ}\bigl(r(2-t)t^{\frac{3}{2}-\lambda}U_4^L\bigr)\delta_I^{(P}\delta_J^{Q)}\bigl[r(t-2)t^{\frac{5}{2}-\lambda}U^I_4 +2r(2-3t+t^2)t^{\frac{3}{2}-\lambda} U^I_0 \bigr].
\end{align*}
From these formulas, it is clear that
\begin{equation} \label{Bsczero}
\Bsc = \Ord(U) \quad \text{for $\lambda \leq 1$.}
\end{equation}

Letting
\begin{equation} \label{Vdef}
V = \begin{pmatrix} V^J_0\\ V^J_1 \\ V^J_\Sigma \\ V^J_4\end{pmatrix},
\end{equation}
we define a positive definite inner-product $h$ by
\begin{equation}\label{Mhdef}
h(U,V) = \delta_{IJ}\bigl(U^I_0V^J_0 + U^I_1 V^J_1 + \gsl^{\Lambda\Sigma}U^I_\Lambda V^J_\Sigma + U^I_4 V^J_4\bigr).
\end{equation}
It is then not difficult to see that $B^0$, $B^1$ and $B^\Lambda$ are all symmetric with respect to this inner-product and that
\begin{equation*}%\label{MB0bnd}
h(V,B^0 V) \geq h(V,V)
\end{equation*} 
as long as $t\in [0,1]$. This implies that the system of equations \eqref{MwaveC} is symmetric hyperbolic. 

To proceed, we introduce a change of radial coordinate via
\begin{equation} \label{Mrhodef}
r = \rho^m, \quad m\in \Zbb_{\geq 1}.
\end{equation}
Using the transformation law
\begin{equation*}
r\del{r} = r \frac{d\rho}{dr} \del{\rho} = \frac{\rho}{m}\del{\rho},
\end{equation*}
we can express \eqref{MwaveC} as
\begin{equation} \label{MwaveD}
B^0 \del{t}U +  \frac{\rho}{m}B^1 \del{\rho}U + B^\Gamma \nablasl{\Gamma} U = \frac{1}{t}\Bc U + F,
\end{equation}
where now any $r$ appearing in $F$  is replaced using \eqref{Mrhodef}.  We further observe that the spacetime region \eqref{Mr0def} can be expressed
in terms of the radial coordinate $\rho$ as
\begin{equation} \label{Mr02rho}
M_{r_0} = \biggl\{ \, (t,\rho) \in (0,1)\times (0,\rho_0) \: \biggl|\: t >2- \frac{\rho_0^m}{\rho^m} \, \biggr\} \times \mathbb{S}^2, \quad \rho_0 = (r_0)^{\frac{1}{m}}.
\end{equation}

\subsubsection{The extended system}
Next, we let $\hat{\chi}(\rho)$ denote a smooth cut-off function that satisfies
\begin{equation*}
\hat{\chi} \geq 0, \quad \hat{\chi}|_{[-1,1]} = 1 \AND \supp(\hat{\chi}) \subset (-2,2),
\end{equation*}
and we define
\begin{equation}\label{chirho0def}
\chi(\rho) = \hat{\chi}\biggl(\frac{\rho}{\rho_0}\biggr),
\end{equation}
which is easily seen to satisfy
\begin{equation*}
\chi \geq 0, \quad \chi|_{[-\rho_0,\rho_0]} = 1 \AND \supp(\chi) \subset (-2\rho_0,2\rho_0).
\end{equation*}
We then consider an extended version of \eqref{MwaveD} given by
\begin{equation} \label{MwaveE}
B^0 \del{t} U +  \frac{\chi\rho }{m} B^1\del{\rho} U + B^\Gamma \nablasl{\Gamma} U = \frac{1}{t}\Bc U +\chi F  
\end{equation} 
that is well-defined on the extended spacetime region
\begin{equation}\label{MMscdef}
(0,1) \times  T^{1}_{3\rho_0}\times \mathbb{S}^2
\end{equation} 
where $T^{1}_{3\rho_0} \cong \mathbb{S}^1$ is the 1-dimensional torus obtained from identifying the end points of the interval $[-3\rho_0,3\rho_0]$.
By construction, \eqref{MwaveE} agrees with \eqref{MwaveD} when restricted to \eqref{Mr02rho}. 
Noting that the boundary of the region \eqref{Mr02rho} can be decomposed as
\begin{equation*}
\del{}M_{r_0} = \Sigma_{r_0}\cup \Sigma^+_{r_0} \cup \Gamma^{-} \cup \Gamma^{+}
\end{equation*}
where
\begin{gather*}
\Sigma_{r_0} =  \{1\}\times (0,\rho_0) \times \mathbb{S}^2, \quad
\Sigma^+_{r_0} = \{0\}\times \Bigl(0,\frac{\rho_0}{2^{\frac{1}{m}}}\Bigr)\times \mathbb{S}^2, \\
\Gamma^{-} = [0,1] \times \{0\} \times \mathbb{S}^2
\AND
\Gamma^{+} = \biggl\{ \, (t,r) \in [0,1]\times (0,\rho_0] \: \biggl|\: t = 2- \frac{\rho_0^m}{\rho^m} \, \biggr\} \times \mathbb{S}^2,
\end{gather*} 
we see immediately that
\begin{equation*}
n^{-}= -d\rho \AND n^{+} = -dt + m\frac{\rho_0^m}{\rho^{m+1}} d\rho
\end{equation*}
define outward pointing co-normals to $\Gamma^{-}$ and $\Gamma^{+}$, respectively. Furthermore, from \eqref{MB0def}-\eqref{MBKdef}, we get that
\begin{align*}
\Bigl(n^{-}_{0} B^0 +n^{-}_1 \frac{\chi\rho}{m} B^1 + n^{-}_\Gamma B^\Gamma\Bigr) \Bigl|_{\Gamma^{-}} &= 0
\intertext{and}
 \Bigl(n^{+}_{0} B^0 +n^{+}_1 \frac{\chi\rho}{m} B^1 + n^{+}_\Gamma B^\Gamma\Bigr)\Bigl|_{\Gamma^{+}} & = \left. \begin{pmatrix}
\begin{displaystyle}\biggl(-(2-t)+\frac{\rho_0^m}{\rho^m}\frac{2(t-1)}{t}\biggr) \delta^K_J\end{displaystyle} & -\begin{displaystyle}\frac{\rho_0^m}{\rho^m t^{\frac{1}{2}}}\delta^K_J\end{displaystyle} & 0 & 0\\
 -\begin{displaystyle}\frac{\rho_0^m}{\rho^m t^{\frac{1}{2}}}\delta^K_J\end{displaystyle} & -\delta^K_J & 0 & 0 \\
0 & 0 & -\delta^\Sigma_\Lambda\delta^K_J & 0\\
0 & 0 & 0 & -\delta^K_J
\end{pmatrix}\right|_{\Gamma^+},
\end{align*}
where we note that
\begin{align*}
\biggl(-(2-t)+\frac{\rho_0^m}{\rho^m}\frac{2(t-1)}{t}\biggr)\biggl|_{\Gamma^+} &= 
 \biggl(-(2-t)+ \frac{\rho_0^m}{\rho^m}+\frac{\rho_0^m}{\rho^m}\frac{(t-2)}{t}\biggr)\biggl|_{\Gamma^+} = -\frac{\rho_0^{2m}}{\rho^{2m}t}\biggl|_{\Gamma^+}
\end{align*}
since $2-t=\frac{\rho_0^m}{\rho^m}$ on $\Gamma^+$.
From these expressions and the definition of the inner-product \eqref{Mhdef}, we deduce that
\begin{equation*}%\label{Mwspacelike}
h\Bigl(V, \Bigl(n^{\pm}_{0} B^0 +n^{-}_1 \frac{\rho}{m} B^1 + n^{-}_\Gamma B^\Gamma\Bigr) V\Bigr)\Bigl|_{\Gamma^{-}} = 0
\end{equation*}
and
\begin{equation*}%\label{Mwspacelike}
h\Bigl(V, \Bigl(n^{+}_{0} B^0 +n^{+}_1 \frac{\rho}{m} B^1 + n^{+}_\Gamma B^\Gamma\Bigr) V\Bigr)\Bigl|_{\Gamma^{+}} = -\delta_{IJ}\biggl(\frac{\rho_0^m V^I_0}{\rho^m t^{\frac{1}{2}}} + V^I_1\biggl)\biggl(\frac{\rho_0^m V^J_0}{\rho^m t^{\frac{1}{2}}} + V^J_1\biggl)-\delta_{IJ}\gsl^{\Lambda\Sigma}V^I_\Lambda V^J_\Sigma - \delta_{IJ}V_4^IV_4^J\leq 0.
\end{equation*}
By definition, see \cite[\S 4.3]{Lax:2006}, this implies that the surfaces $\Gamma^{\pm}$ are weakly spacelike for the symmetric hyperbolic system \eqref{MwaveE}. The importance of this
fact is that it will guarantee that any solution of the extended system \eqref{MwaveE} on the extended spacetime \eqref{MMscdef} will also yield
by restriction a solution of the original system 
\eqref{MwaveD} on the region \eqref{Mr02rho} that is uniquely determined by the restriction of the initial data to $\{1\}\times (0,\rho_0)\times \mathbb{S}^2$. From this property and the above arguments, we conclude that the existence 
of solutions to the system of semilinear wave equations \eqref{Mbwave} on the regions of the form \eqref{Mbrb0def} in Minkowski spacetime can
be obtained from solving the initial value problem 
\begin{align} 
B^0 \del{t} U + \frac{\chi\rho}{m} B^1 \del{\rho} U + B^\Gamma \nablasl{\Gamma} U &= \frac{1}{t}\Bc U +\chi F  && \text{in $(0,1) \times T^{1}_{3\rho_0}\times \mathbb{S}^2$,}  \label{MwaveF.1}\\
U &= \mathring{U} && \text{in $\{1\} \times T^{1}_{3\rho_0}\times \mathbb{S}^2$,}  \label{MwaveF.2}
\end{align} 
where the solutions generated this way are independent of the particular form of the initial data $\mathring{U}$ on $(\{1\}\times T^{1}_{3\rho_0}\times\mathbb{S}^2)\setminus (\{1\}\times (0,\rho_0)\times\mathbb{S}^2)$.

We are now almost in a position to apply Theorem \ref{symthm} to establish the existence of solutions to \eqref{MwaveF.1}-\eqref{MwaveF.2}. However, to do so, we must first verify two structural conditions. The first is to show that the inequality 
\begin{equation} 
h(V,\Bc V) \geq \kappa h(V,B^0 V)
\label{MBcbndA} 
\end{equation}
holds where $\lambda$ is chosen so that
\begin{equation}\label{Mkappadef}
\kappa = \lambda -\frac{1}{4}(2+\sqrt{2})>0,
\end{equation}
which we note is compatible with the condition $\lambda \leq 1$ that is needed to ensure that \eqref{Bsczero}
holds.
To see the validity of \eqref{MBcbndA}, we note, with the help of the inequality $|\delta_{IJ}V_0^I V_4^J| \leq \frac{\ep^2}{2} \delta_{IJ}V^I_0 V_0^J + \frac{1}{2\ep^2} \delta_{IJ}V_4^I V_4^J$, $\ep >0$, that

\begin{align}
h(V,\Bc V) &=\delta_{IJ} \biggl(2\Bigl(\lambda-\frac{1}{2}\Bigr)V_0^IV_0^J + V_0^IV_4^J +\Bigl(\lambda-\frac{1}{2}\Bigr)V_4^I V_4^J + \lambda V_1^I V_1^J+ \lambda \gsl^{\Lambda\Sigma}V^I_\Lambda V^J_\Sigma \biggr)
&& \text{(by \eqref{MBcdef}  \&  \eqref{Mhdef})}\notag  \\
&\geq \delta_{IJ}\biggl(\Bigl(2\Bigl(\lambda-\frac{1}{2}\Bigr)-
\frac{\ep^2}{2}\Bigr) V_0^I V_0^J  +\Bigl(\lambda-\frac{1}{2}-\frac{1}{2\ep^2}\Bigr)V_4^I V_4^J + \lambda V_1^I V_1^J+ \lambda \gsl^{\Lambda\Sigma}V^I_\Lambda V^J_\Sigma\biggr).
\label{MBcbndB} 
\end{align}
We then fix $\ep$ by demanding that 
$\frac{1}{2}\Bigl(2\Bigl(\lambda-\frac{1}{2}\bigr)-\frac{\ep^2}{2}\Bigr) = \Bigl(\lambda-\frac{1}{2}-\frac{1}{2\ep^2}\Bigr)$.
Solving this yields
\begin{equation}\label{Mepfix}
\ep^2 = \sqrt{2},
\end{equation}
which in turns gives
\begin{equation} \label{Mepfix2}
\frac{1}{2}\Bigl(2\Bigl(\lambda-\frac{1}{2}\bigr)-\frac{\ep^2}{2}\Bigr) = \Bigl(\lambda-\frac{1}{2}-\frac{1}{2\ep^2}\Bigr) = \lambda -\frac{1}{4}(2+\sqrt{2})
=\kappa.
\end{equation}
Assuming that \eqref{Mkappadef} holds,
we see, after substituting \eqref{Mepfix} and
\eqref{Mepfix2} into \eqref{MBcbndB} and recalling \eqref{MB0def} and \eqref{Mhdef}, that the inequality 
\begin{equation*}
h(V,\Bc V) \geq \kappa \delta_{IJ}\bigl( 2 V_0^I V_0^J  + V_1^I V_1^J+ \gsl^{\Lambda\Sigma}V^I_\Lambda V^J_\Sigma + V_4^I V_4^J\bigr)\geq \kappa h(V,B^0 V)
\end{equation*}
holds for $t\in [0,1]$.

The second structural condition, which is related to the constants
$\mathtt{b}$ and $\tilde{\mathtt{b}}$, involves bounding the size of the matrix\footnote{Note that the spatial derivatives (e.g. $\nablasl{\Lambda} \Bc$, $\del{\rho}\Bc$, $\nablasl{\Lambda}\Csc$, $\del{\rho}\Csc$,
$\nablasl{\Lambda}\bigl( \frac{\rho\chi}{m}B^1\bigr)$, $\nablasl{\Lambda}B^\ell$ and $\del{\rho}B^\ell$ for $\ell=0,2,3$) of all the other $U$-independent matrices appearing in \eqref{MwaveF.1} (see also \eqref{MFrep}) vanish.}
$\del{\rho}\bigl(t\frac{\rho\chi}{m}B^1\bigr)$. 
From the bound
\begin{equation*}
\Bigl|\del{\rho}\Bigl(t\frac{\rho\chi}{m}B^1\Bigr)\Bigr|_{\op}\leq \max_{0\leq t \leq 1}|t B^1(t)|_{\op} \norm{\del{\rho}(\rho\chi))}_{L^\infty\bigl(T^1_{3\rho_0}\bigr)} \frac{1}{m},
\end{equation*}
it is clear, given any $\sigma > 0$, that there exists a positive integer $m=m(\sigma)$ such that
\begin{equation} \label{MB1bnd}
\Bigl|\del{\rho}\Bigl(t\frac{\rho\chi}{m}B^1\Bigr)\Bigr|_{\op} < \sigma \quad \text{ in $(0,1) \times  T^{1}_{3\rho_0}\times \mathbb{S}^2$.}
\end{equation}

\subsubsection{Global existence}
Having established that \eqref{MwaveF.1} is symmetric hyperbolic, we can appeal to the Cauchy stability
property satisfied by symmetric hyperbolic systems to
conclude, for any given $t_0\in (0,1)$, the existence of a unique solution 
\begin{equation*}
U\in C^0\bigl((t_0,1],H^k( T^{1}_{3\rho_0}\times \mathbb{S}^2)\bigr)\cap L^\infty\bigl((t_0,1],H^k( T^{1}_{3\rho_0}\times \mathbb{S}^2)\bigr)\cap C^1\bigl((t_0,1],H^{k-1}(T^{1}_{3\rho_0}\times \mathbb{S}^2)\bigr)
\end{equation*}
to \eqref{MwaveF.1}  provided that
 $k\in \Zbb_{>3/2}$ and the initial
data $U|_{t=1} = \mathring{U} \in H^k( T^{1}_{3\rho_0}\times \mathbb{S}^2)$ is chosen small enough. Furthermore, by
standard results, this solution will satisfy an energy estimate
of the form
\begin{equation*}
\norm{U(t)}_{H^k}^2+ \int_{t}^1 \frac{1}{\tau} \norm{U(\tau)}_{H^k}^2\, d\tau   \leq C\bigl( \norm{U}_{L^\infty((t_0,0],H^k)}\bigr) \norm{\mathring{U}}_{H^k}^2, \qquad t_0<t\leq 1,
\end{equation*}
and the norm $\norm{U(t_0)}_{H^k}$
can be made as small as we like by choosing the initial
data $\mathring{U}$ at $t=1$
suitably small.

To continue this solution from $t=t_0$ to
$t=0$, we
now assume that $k\in \Zbb_{>9/2}$ and choose the initial data
 $\mathring{U}$ at $t=1$ small enough so that $\norm{U(t_0)}_{H^k}$ and $t_0$ can be taken to be sufficiently small. If we further assume that
 \begin{equation} \label{lambdafix}
\lambda \in \Bigl(\frac{1}{4}(2+\sqrt{2}),1\Bigr],
\end{equation}
 then from \eqref{MB0def}-\eqref{MBcdef}, \eqref{MFrep}, \eqref{Bsczero}, \eqref{MBcbndA}, \eqref{MB1bnd}  and the simple time transformation $t \mapsto -t$, it is not difficult to verify
that \eqref{MwaveF.1} will satisfy all the assumptions from Section \ref{coeffassump} with $\Pbb = \id$ on the region $(-t_0,0)\times T^1_{3\rho_0}\times\mathbb{S}^2$. Moreover, by \eqref{MB1bnd}, we can always choose the integer $m$ large enough
so that the constants $\beta_1$ and  $\mathtt{b} = \tilde{\mathtt{b}}$ from Theorem \ref{symthm} can be made as small as we like while the constants $\lambda_1$ and $\alpha$
vanish.
Thus, for any $\upsilon>0$, we can apply Theorem \ref{symthm} to
\eqref{MwaveF.1} with initial data given by $U(t_0)$ at
$t=t_0$ to
obtain the existence of a unique solution
\begin{equation*}
U\in C^0\bigl((0,t_0],H^k( T^{1}_{3\rho_0}\times \mathbb{S}^2)\bigr)\cap L^\infty\bigl((0,t_0],H^k( T^{1}_{3\rho_0}\times \mathbb{S}^2)\bigr)\cap C^1\bigl((0,t_0],H^{k-1}(T^{1}_{3\rho_0}\times \mathbb{S}^2)\bigr)
\end{equation*}
that satisfies the energy
\begin{equation*}
\norm{U(t)}_{H^k}^2+ \int_{t}^1 \frac{1}{\tau} \norm{U(\tau)}_{H^k}^2\, d\tau   \leq C\bigl( \norm{U}_{L^\infty((1,t_0],H^k)}\bigr) \norm{U(t_0)}_{H^k}^2
\end{equation*}
and decay
\begin{equation*}
\norm{U(t)}_{H^{k-1}} \lesssim 
t^{\kappa-\upsilon}
\end{equation*}
estimates for $0<t\leq t_0$. This establishes the existence of a unique solution 
of the IVP \eqref{MwaveF.1}-\eqref{MwaveF.2}, 
and completes the proof of the following theorem.

\begin{thm} \label{MGlobalthm}
Suppose $k\in \Zbb_{>9/2}$, $\rho_0>0$, $\upsilon>0$, $\lambda \in \bigl(\frac{1}{4}(2+\sqrt{2}),1\bigr]$ and $\kappa =  \lambda -\frac{1}{4}(2+\sqrt{2})$. Then there exist  $m\in \Zbb_{\geq 1}$ and $\delta >0$
such that if $\mathring{U} \in H^k(T^{1}_{3\rho_0}\times \mathbb{S}^2)$ is chosen so that
$\norm{\mathring{U}}_{H^k} < \delta$,
then there exists a unique solution 
\begin{equation*}
U\in C^0\bigl((0,1],H^k( T^{1}_{3\rho_0}\times \mathbb{S}^2)\bigr)\cap L^\infty\bigl((0,1],H^k( T^{1}_{3\rho_0}\times \mathbb{S}^2)\bigr)\cap C^1\bigl((0,1],H^{k-1}(T^{1}_{3\rho_0}\times \mathbb{S}^2)\bigr)
\end{equation*}
of the IVP \eqref{MwaveF.1}-\eqref{MwaveF.2} that satisfies the energy
\begin{equation*}
\norm{U(t)}_{H^k}^2+ \int_{t}^1 \frac{1}{\tau} \norm{U(\tau)}_{H^k}^2\, d\tau   \leq C\bigl( \norm{U}_{L^\infty((1,0],H^k)}\bigr) \norm{\mathring{U}}_{H^k}^2
\end{equation*}
and decay
\begin{equation*}
\norm{U(t)}_{H^{k-1}} \lesssim 
t^{\kappa-\upsilon}
\end{equation*}
estimates for $0<t\leq 1$.
\end{thm}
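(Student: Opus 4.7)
The plan is to establish global existence by a two-stage argument: first a standard local-in-time existence step that propagates the initial data from $t=1$ down to some auxiliary time $t_0\in(0,1)$, and then an application of Theorem \ref{symthm} (after a trivial time-reversal) to continue the solution from $t_0$ all the way down to the singular time $t=0$. For the first stage, I observe that \eqref{MwaveF.1} is symmetric hyperbolic: with respect to the inner-product $h$ from \eqref{Mhdef}, the coefficient matrices $B^0$, $\tfrac{\chi\rho}{m}B^1$ and $B^\Gamma$ are all symmetric, and $B^0\geq\id$ on $[0,1]$. Standard local existence theory (e.g.\ \cite[Ch.~16 \S 1]{TaylorIII:1996}) therefore yields, for $k\in\Zbb_{>9/2}$ and sufficiently small $\mathring U\in H^k$, a unique solution on an interval $(t_0,1]$ together with the standard energy estimate; moreover, by Cauchy stability we can arrange $t_0\in(0,1)$ to be as small as we wish and $\norm{U(t_0)}_{H^k}$ to be as small as we wish, by shrinking $\delta$.

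For the second stage, the idea is to invoke Theorem \ref{symthm} on $[-t_0,0)$ (after $t\mapsto -t$) with the choice $\Pbb=\id$, so that $\Pbb^\perp=0$ and every assumption in Section \ref{coeffassump} involving $\Pbb^\perp$ is automatic. What must then be checked is: the algebraic bound $\tfrac{1}{\gamma_1}\id\leq B^0\leq \tfrac{1}{\kappa}\Bc\leq \gamma_2\id$ with $\kappa=\lambda-\tfrac{1}{4}(2+\sqrt 2)>0$, which is exactly the inequality \eqref{MBcbndA} established via the Young-type argument culminating in \eqref{Mepfix}-\eqref{Mepfix2}; the expansion \eqref{fexp} for $F$, which reads off from \eqref{MFrep} with $\Ft=0$, $F_0=\bigl(\tfrac{3}{2}-\lambda\bigr)U_0^K+\Bsc U$ (regular in $t$ and $\Ord(U)$ by \eqref{Bsczero} whenever $\lambda\leq 1$), and a $t^{-1}\Csc U$ piece that contributes only to $\lambda_1$; the expansion \eqref{Bexp} for $B=\bigl(\tfrac{\chi\rho}{m}B^1,B^\Gamma\bigr)$, for which only the $B_0$-piece is present (so $B_1=B_2=0$ and $\alpha=0$) since the spatial matrices in \eqref{MB1def}-\eqref{MBKdef}, after multiplication by $t$, are regular as $t\searrow 0$; and finally the $\Div B$ bounds \eqref{divBbnd.1}-\eqref{divBbnd.4}, which follow by direct inspection of the explicit coefficients.

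The quantitative heart of the argument is the smallness inequality \eqref{eq:symhypkappa}. With $\Pbb=\id$ we have $\lambda_3=0$ and the odd-indexed $\beta$'s arising from off-diagonal $\Pbb$-$\Pbb^\perp$ blocks vanish, so the requirement reduces essentially to $\kappa>\tfrac{1}{2}\gamma_1\bigl(\beta_1+2k(k+1)\mathtt{b}\bigr)$. The constant $\mathtt{b}$ is controlled by quantities of the form $\bigl\|\bigl|\del{\rho}\bigl(t\tfrac{\rho\chi}{m}B^1\bigr)\bigr|_{\op}\bigr\|_{L^\infty}$ and analogous expressions, and by \eqref{MB1bnd} these can be driven below any prescribed threshold by enlarging the integer $m$ in the change of radial coordinate \eqref{Mrhodef}. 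Hence, fixing $\lambda\in\bigl(\tfrac{1}{4}(2+\sqrt 2),1\bigr]$ first and then choosing $m=m(\upsilon)$ sufficiently large, \eqref{eq:symhypkappa} holds and Theorem \ref{symthm} applies with $\zeta=\kappa$ (up to the arbitrarily small loss $\sigma$, which we identify with $\upsilon$). The conclusion gives the energy estimate and the decay bound $\norm{U(t)}_{H^{k-1}}\lesssim |t|^{\kappa-\upsilon}$ on $(0,t_0]$; concatenating with the local solution on $[t_0,1]$ yields the claimed solution on $(0,1]$.

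The main obstacle I expect is the bookkeeping of the structural constants in assumptions (ii)-(v) of Section \ref{coeffassump}, most notably ensuring that the constant $\mathtt{b}$ can be made small independently of $\kappa$, $\gamma_1$, and the semilinear nonlinearities $q^K_{IJ}$. This is precisely what the $1/m$ factor coming from the radial rescaling $r=\rho^m$ is designed to provide: it converts the $\rho$-derivative of the spatial principal symbol into something controlled by $1/m$, and thereby decouples the size of $\mathtt{b}$ from all other data in the problem.
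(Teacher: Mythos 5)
Your overall strategy is the same as the paper's: local existence plus Cauchy stability down to a small time $t_0$ with small data, a time reversal $t\mapsto -t$, verification of the assumptions of Section \ref{coeffassump} with $\Pbb=\id$ (so that \eqref{MBcbndA} supplies $\kappa$ and \eqref{MB1bnd}, via the radial rescaling $r=\rho^m$ with $m$ large, makes $\beta_1$ and $\mathtt{b}$ small enough for \eqref{eq:symhypkappa}), then an application of Theorem \ref{symthm} on $(0,t_0]$ and concatenation. However, two of your verifications are wrong as written. First, your reading of \eqref{MFrep} is off: the source is $F=(\cdots)+\frac{1}{t}\bigl(t^{\frac12}\Csc+\Bsc\bigr)U$, so the $\Bsc U$ contribution carries a factor $|t|^{-1}$ and is \emph{not} a regular $F_0$-term, while the $\Csc U$ contribution is of order $|t|^{-\frac12}$ (an $F_1$-type term), not $t^{-1}$. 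This matters because with $\Pbb=\id$ the condition \eqref{F2vanish} forces $F_2\equiv 0$, so a genuinely singular $|t|^{-1}$ source cannot be accommodated in the expansion \eqref{fexp} at all; the term $\frac{1}{t}\Bsc U$ must instead be absorbed into the singular term $\frac{1}{t}\Bc\Pbb u$ by replacing $\Bc$ with $\Bc+\Bsc$ (and, if convenient, $t^{\frac12}\Csc$), which is legitimate only because $\Bsc=\Ord(U)$ by \eqref{Bsczero} for $\lambda\le 1$, so that \eqref{Bcbnd} still holds and the lower bound in \eqref{B0BCbnd}, i.e. \eqref{MBcbndA}, survives with an arbitrarily small loss in $\kappa$ for small $R$ and $t_0$. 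Your proposal silently treats this term as harmless; that is the one step that would actually fail without the absorption argument (a genuine $t^{-1}\Csc U$ term with $U$-independent, order-one $\Csc$ could not be handled with $\Pbb=\id$ at all).

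Second, the claim that ``only the $B_0$-piece is present (so $B_1=B_2=0$)'' is false: by \eqref{MB1def}--\eqref{MBKdef}, $B^1$ contains the entries $\frac{2(t-1)}{t}$ and $-t^{-\frac12}$ and $B^\Gamma$ contains $-t^{-\frac12}$, so the expansion \eqref{Bexp} has nontrivial $B_1$ and $B_2$. This is precisely why $\beta_1$ and $\mathtt{b}$ are nonzero and why the $1/m$ mechanism from \eqref{MB1bnd} is needed at all --- a mechanism you then invoke correctly, so your write-up contradicts itself on this point. (The statement $\alpha=0$ is true, but only because $\Pbb^\perp=0$ renders \eqref{BI1bnd.2}--\eqref{BI1bnd.3} vacuous, not because $B_1$ vanishes.) With these two corrections --- grouping $\frac{1}{t}\Bsc U$ with $\Bc$, and carrying the genuine $|t|^{-\frac12}$ and $|t|^{-1}$ parts of the spatial coefficients through assumptions (iv)--(v) so that $\beta_1,\mathtt{b}=\Ord(1/m)$ --- your argument coincides with the paper's proof, including the final choice of $m$ and $\sigma$ so that $\zeta-\sigma\geq\kappa-\upsilon$.
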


\begin{rem} \label{regrem}
$\;$
\begin{enumerate}[(i)]
\item The regularity assumption on the initial data
in Theorem \ref{MGlobalthm} is not optimal since the proof does not take into account the semilinearity of the wave equations \eqref{MwaveF.1}. Although we will not do this here, regularity improvements can be obtained by establishing a version of Theorem \ref{symthm} that is adapted to semilinear equations.
\item Due to the appearance of the derivative $\rho\del{\rho}$ in the evolution equation \eqref{MwaveF.1}, it is not difficult to adapt the proof of Theorem \ref{MGlobalthm} to establish the existence and uniqueness of solutions in the weighted spaces $\Hsc^\alpha_k$ defined in \cite{ChruscielLengard:2005}. This type of generalization allows for initial data and solutions that have polyhomogeneous behavior in $\rho$ as $\rho\searrow 0$. A proof of this adaptation of Theorem \ref{MGlobalthm} will be given in a separate article.
\end{enumerate}
\end{rem}

\subsection{Wave equations near spatial infinity on Schwarzschild spacetimes}

\subsubsection{The cylinder at spatial infinity}
In this section, we generalize the global existence results for semilinear
wave equations on Miknowksi spacetime from the previous section to Schwarzschild spacetimes. We do so by employing the cylinder at spatial infinity
representation\footnote{More precisely, the future half of the cylinder at
infinity representation.} for a Schwarzschild spacetime of mass $\mu>0$ from \cite{FrauendienerHennig:2017}
that is given by the spacetime consisting of the manifold
\begin{equation*}
M = (0,1)\times (0,1)\times \mathbb{S}^2
\end{equation*}
and the (conformal) Lorentzian metric
\begin{equation}
g=\frac{1}{r}A (dr\otimes dt+dt\otimes dr) +\frac{t}{r^2}A\left(2-tA \right)dr\otimes dr+\gsl\label{met}
\end{equation}
where $(t,r)\in (0,1)\times (0,1)$,
\begin{equation}\label{defA}
A=\frac{(1+r)^3(1-rt)}{(1-r)(1+rt)^3},
\end{equation}
and as above, $\gsl$ is the canonical
metric on the 2-sphere $\mathbb{S}^2$, see \eqref{gsldef}.
As shown in \cite{FrauendienerHennig:2017}, the metric \eqref{met} is
conformally\footnote{This is easily seen using the change of coordinates
\begin{equation*}
 (\tilde{t},\tilde{r},\theta,\phi ) \longmapsto (x^{\mu}) =\left(\frac{\mu\left(1-t-r^2t(1-t)\right)}{2rt}-4\mu\ln\left(\frac{t^{\frac{1}{2}}(1-r)}{1-rt} \right),\frac{\mu}{2rt},\theta,\phi  \right)
 \end{equation*}
that will bring the metric $\gt$, defined by \eqref{Schgtdef}, into the
easily recognized form 
\begin{equation*}
\tilde{g}=-\left(\frac{1-\frac{\mu}{2\tilde{r}}}{1+\frac{\mu}{2\tilde{r}}}\right)^2d\tilde{t}\otimes d\tilde{t} + \left(1+\frac{\mu}{2\tilde{r}}\right)^4\left(d\tilde{r} \otimes  d\tilde{r} + \gsl\right)
\end{equation*}
of the Schwarzschild metric in isotropic coordinates.}
related to the Schwarzschild metric $\gt$ of mass $\mu>0$ according to 
\begin{equation} \label{Schgtdef}
 \tilde{g}=\Omega^2g,
 \end{equation}
where 
\begin{equation}\label{omegaA}
 \Omega=\frac{\mu(1+rt)^2}{2rt},
\end{equation}
while the boundary components 
\begin{equation*} 
i^0 = [0,1]\times \{0\} \times \mathbb{S}^2 \AND
\Isc^{+} = \{0\}\times (0,1) \times \mathbb{S}^2
\end{equation*}
of $M$ correspond to spatial infinity and future null-infinity, respectively,
and the boundary component 
\begin{equation*}
\Sigma = \{1\}\times (0,1)\times \mathbb{S}^2
\end{equation*}
defines a spacelike hypersurface in $M$. For other applications
of the cylinder at spatial infinity construction in Schwartzschild spacetimes
to linear wave equations,
see the articles \cite{FrauendienerHennig:2017,FrauendienerHennig:2018,ValienteKroon:2007,ValienteKroon:2009}.

For use below, we note that the Ricci scalar of $\gt$
vanishes, that is,
\begin{equation} \label{SchRtdef}
\Rt = 0.
\end{equation}
This follows because $\gt$ coincides with a Schwarzschild metric and hence, has vanishing Ricci curvature.
It is also not difficult to verify via a straightforward calculation that the Ricci scalar
of the metric $g$, defined by \eqref{met}, is given by
\begin{equation} \label{SchRdef}
R = \frac{24 rt}{(1+rt)^2}.
\end{equation}

\subsubsection{Semilinear wave equations}
In the following, we consider the same class of semilinear wave
equations as we did in the previous section, namely systems of wave equations of the
type
\begin{equation} \label{SchMtwave}
\gt^{\alpha\beta}\nablat_\alpha \nablat_\beta \ut^K  = q^K_{IJ}(\ut^L)\gt^{\alpha\beta}
\nablat_\alpha \ut^I \nablat_\beta \ut^J 
\end{equation}
where $q^K_{IJ}\in C^\infty(\Rbb^N)$, but $\gt$ is now the Schwarzschild metric defined by \eqref{Schgtdef} and $\nablat$ is the Levi-Civita connection of $\gt$. We will solve these type of wave equations on 
domains of the form
\begin{equation} \label{SchMr0def} 
\Mcal_{r_0} = (0,1)\times (0,r_0) \times \mathbb{S}^2, \qquad 0<r_0<1,
\end{equation}
that define a ``neighborhood'' of spatial infinity in the Schwarzschild 
spacetime of mass $\mu>0$. 

\begin{rem}$\;$
\begin{enumerate}[(i)]
\item The existence results we establish in this section are not completely new in the sense that global existence results, under a small initial
data assumption, for solutions to scalar semilinear wave equations
of the form \eqref{SchMtwave} on Schwarzschild
spacetimes, and more generally, on Kerr spacetimes, has been previously established in \cite{Luk:2013}, see, in particular, Theorem 1.5 from \cite{Luk:2013}. With that said, the method we use is new, and even in the scalar case, we
believe it brings a valuable 
new perspective to global existence problems for systems of nonlinear wave equations on Schwarzschild spacetimes. Indeed, comparing the analysis below
to that carried out above for wave equations on Minkowski space, it is apparent that establishing global existence on Schwarzschild spacetimes using the Fuchsian approach is no more difficult than on Minkowski spacetime, and in either case, the global existence proofs are relatively straightforward.
Furthermore, the results of this section are also of interest because they
demonstrates the utility of Friedrich's cylinder at spatial infinity construction for solving nonlinear wave equations near spatial infinity on Schwarzschild spacetimes. 
\item The analysis carried out here for wave equations of the form \eqref{SchMtwave} on Schwarzschild spacetimes can be generalized to systems of quasilinear wave equations satisfying
conditions that can be viewed as a natural adaptation of the null condition to Schwarzschild spacetimes. We will report on this work in a separate article. 
\end{enumerate}
\end{rem}

Using \eqref{SchRtdef}, we can write the sytem of wave equations \eqref{SchMtwave}
as 
\begin{equation}
\tilde{g}^{\alpha \beta}\tilde{\nabla}_{\alpha}\tilde{\nabla}_{ \beta}\tilde{u}^K-\frac{\tilde{R}}{6}\tilde{u}^K=\tilde{f}^K\label{weq0}
\end{equation}
where 
\begin{equation*}
\tilde{f}^K=q^K_{IJ}(\tilde{u}^L)\gt^{\alpha\beta}\tilde{\nabla}_{\alpha} \tilde{u}^I\tilde{\nabla}_{\beta}\tilde{u}^J.
\end{equation*}
From \eqref{omegaA}, \eqref{SchRdef} and the formulas \eqref{gtrans}-\eqref{wavetransC} and \eqref{ftransB}-\eqref{ftransC} from Appendix \ref{ctrans}, we then see that under the conformal transformation \eqref{Schgtdef} the
wave equations \eqref{weq0} transforms into
\begin{equation}
g^{\alpha \beta}\nabla_{\alpha}\nabla_{ \beta}u^K-\frac{R}{6}u^K=f^K  \label{weq}
\end{equation}
where $\nabla$ is the Levi-Civita connection of the metric $g$, now
defined by \eqref{met},
\begin{equation*}\label{raltionuutilde}
\tilde{u}^K=\frac{2rt}{\mu(1+rt)^2}u^K
\end{equation*}
and
\begin{align*}
f^K &= q^K_{IJ}\biggl(\frac{2rt}{\mu(1+rt)^2}u^L\biggr)\biggl(
\frac{2rt}{\mu(1+rt)^2} g^{\alpha\beta}\nabla_\alpha  u^I \nabla_\beta u^J 
 \\
 &\qquad +2 g^{\alpha\beta}\nabla_\alpha \biggl(\frac{2rt}{\mu(1+rt)^2}\biggr)
\nabla_\beta u^{(I} u^{J)}
+ \frac{\mu(1+rt)^2}{2rt}g^{\alpha\beta}\nabla_\alpha
\biggl(\frac{2rt}{\mu(1+rt)^2}\biggr)
\nabla_\beta\biggl(\frac{2rt}{\mu(1+rt)^2}\biggr) u^I u^J \biggr).
\end{align*}
It is then not difficult to verify, via a straightforward calculation using
the metric \eqref{met}, that the system of wave equations \eqref{weq}, after multiplication
by $A$, can be expressed as
\begin{equation}
-t(2-tA)\partial_t^2u+2r\partial_{r}\partial_tu^K -2\mathscr{A}\partial_tu^K +A\gsl ^{\Lambda \Sigma} \nablasl{\Lambda} \nablasl{\Sigma}u^K -\frac{4r tA}{(1+r t)^2}u^K= A f^K \label{weq2}
\end{equation}
where
\begin{equation} 
\mathscr{A}=1-\frac{1-2rt}{1-\left(rt \right)^2 } tA,\label{newA}
\end{equation}
\begin{equation*}
\begin{split}
Af=&q^K_{IJ}\left(\frac{2rt}{\mu(1+rt)^2}u^L\right)\left(-\frac{2t^2r(2-tA)}{\mu (1+rt )^2}\del{t}u^I\del{t}u^J+ \frac{4rt}{\mu (1+rt )^2}r\partial_{r }u^{(I}\partial_tu^{J)}+ \frac{2rt A}{\mu (1+rt )^2}\gsl^{\Lambda\Sigma}\nablasl{\Lambda} u^I \nablasl{\Sigma}u^J 
+\right.
\\
&\left.\frac{4r At(t-1)\left[1-r-t(r+6r^2+r^3)+t^2(r^4-r^3)\right]}{\mu(1+r)^3(1+rt)^3} u^{(I} \partial_t u^{J)} +\frac{ 4r(1-rt )}{\mu(1+rt)^3}u^{(I}r\partial_{r}u^{J)}+\frac{2t r (1- rt )^2A}{\mu (1+rt )^4}u^I u^J\right),
\end{split}
\end{equation*}
and, as before, $\nablasl{}$ is the Levi-Civita connection of the metric \eqref{gsldef}
on $\mathbb{S}^2$.

Next, we introduce new variables $U^J_0$, $U^J_1$, $U^J_2$, $U^J_3$ and $U^J_4$ via the relations
\begin{equation*} %\label{newvar}
\frac{1}{2} t^{-\lambda-\frac{1}{2}}U^J_0+t^{-\lambda}U^J_1=\partial_tu^J,\quad
t^{-\lambda}U^J_1=r\partial_ru^J, \quad
t^{-\lambda}U^J_\Lambda=\nablasl{\Lambda}u^J \AND
t^{-\lambda+\frac{1}{2}}U^J_4=u^J,
\end{equation*}
where $\lambda\in \Rbb$ is a constant that will be fixed below.
A routine calculation shows that we can use these variables 
to write the 
wave equation \eqref{weq2} in first order form as
\begin{equation}
B^0\partial_t U+B^1 r \partial_{r}U +B^{\Gamma}\nablasl{\Gamma}U=\frac{1}{t}\mathcal{B}U+F
\label{SchwaveA}
\end{equation} 
where
\begin{align}
U&=\begin{pmatrix}
U^J_0\\
U^J_1\\
U^J_\Sigma\\
U^J_4
\end{pmatrix}, \label{SchwaveAvars}\\[0.3cm]
B^0&=\begin{pmatrix}
\frac{1}{2}\delta^K_J &0&0&0\\
0&-\begin{displaystyle}\frac{2 t-2-t A}{\frac{1}{2}(2-tA)}\delta^K_J\end{displaystyle}&0&0\\
0&0&\begin{displaystyle}\frac{A\delta^\Sigma_\Lambda}{\frac{1}{2}(2-tA)}\delta^K_J
\end{displaystyle}&0\\
0&0&0&\delta^K_J
\end{pmatrix}, \label{SchwaveAB0}\\[0.3cm]
B^{1}&=\begin{pmatrix}
\begin{displaystyle}\frac{2t-2- t^2 A}{2t(2-tA)}\delta^K_J
\end{displaystyle}&\begin{displaystyle}\frac{2 t-2-t^2 A}{t^{\frac{1}{2}}(2-tA)}\delta^K_J\end{displaystyle}&0&0\\
\begin{displaystyle}\frac{2 t-2-t^2 A}{t^{\frac{1}{2}}(2-tA)}
\delta^K_J\end{displaystyle}&\begin{displaystyle}\frac{2 t-2-t^2 A}{\frac{1}{2}(2-tA)}\delta^K_J\end{displaystyle}&0&0\\
0&0&-\begin{displaystyle}\frac{ A\delta^\Sigma_\Lambda}{\frac{1}{2} (2-tA)}\delta^K_J\end{displaystyle}&0\\
0&0&0&0
\end{pmatrix}, \label{SchwaveAB1}\\[0.3cm]
B^\Gamma&=\begin{pmatrix}
0&0&-\begin{displaystyle}\frac{A\gsl^{\Gamma\Sigma}}{t^{\frac{1}{2}}(2-tA)}\delta^K_J\end{displaystyle}&0\\
0&0&0&0\\
-\begin{displaystyle}\frac{A\delta^\Gamma_\Lambda}{t^{\frac{1}{2}}(2-tA)}\delta^K_J\end{displaystyle}&0&0&0\\
0&0&0&0\\
\end{pmatrix},\label{SchwaveABGamma}\\[0.3cm]
\mathcal{B}&=\begin{pmatrix}
\begin{displaystyle}\frac{\frac{1}{2}\left(\lambda+\frac{1}{2}\right)(2-tA)-
\mathscr{A} }{2-tA}\delta^K_J\end{displaystyle}&0&0&0\\
0&-\begin{displaystyle}\lambda\frac{2 t-2- t A}{\frac{1}{2} (2-tA)}\delta^K_J\end{displaystyle}&0&0\\
0&0&\begin{displaystyle}\lambda \frac{A\delta^\Sigma_\Lambda}{\frac{1}{2} (2-tA)}\delta^K_J\end{displaystyle}&0\\
\begin{displaystyle}
\frac{1}{2}\delta^K_J
\end{displaystyle}&0&0&\biggl(\lambda-\frac{1}{2}\biggr)\delta^K_J\\
\end{pmatrix}, \label{SchwaveABcal}
\intertext{and}
F&=\begin{pmatrix}
-\begin{displaystyle}\frac{t^{\lambda-\frac{1}{2}}A}{(2-tA)}f^K-\frac{2 \mathscr{A}  U^K_1}{t^{\frac{1}{2}}(2-tA)}-\frac{4rtAU^K_4}{(2-tA)(1+rt)^2}\end{displaystyle}\\
0\\
0\\
0\\
\begin{displaystyle}\frac{U_1 }{t^{\frac{1}{2}}}\end{displaystyle} \end{pmatrix}.
\label{SchwaveAF}
\end{align}
We further observe that $F$ can be expressed as
\begin{equation} \label{SchwaveAFa}
F=\begin{pmatrix}\begin{displaystyle} - \frac{4rtA}{(2-tA)(1+rt)^2}U^K_4\end{displaystyle}\\ 0\\0\\0\end{pmatrix}+\frac{1}{t}( t^{\frac{1}{2}}\Csc+\Bsc)U
\end{equation}
where 
\begin{equation} \label{SchwaveACsc}
\Csc = \begin{pmatrix}0 & \begin{displaystyle}-\frac{2 \mathscr{A} }{(2-tA)}\delta^K_J\end{displaystyle} & 0& 0\\
0 & 0& 0& 0\\
0 & 0& 0& 0\\
0 & \delta^K_J & 0& 0
\end{pmatrix}
\end{equation}
and
\begin{equation*} %\label{SchwaveABsc}
\mathscr{B}= \begin{pmatrix}
\mathscr{B}^{0K}_J & \mathscr{B}^{1K}_J & \mathscr{B}^{\Sigma K}_J & \mathscr{B}^{4K}_J\\
0&0&0&0\\
0&0&0&0\\
0&0&0&0\\
\end{pmatrix}
\end{equation*}
with
\begin{align*}
\mathscr{B}^{0K}_J=&q^K_{PQ}\left(\frac{2rt^{\frac{3}{2}-\lambda}U^L_4}{\mu(1+rt)^2} \right)\delta^{(P}_I\delta^{Q)}_J \left(-\frac{\frac{1}{2}rt^{-\lambda+\frac{3}{2}}}{\mu(1+rt)^2}U^I_0-\frac{2rt^{-\lambda+1}(1-2t-t^2A)}{\mu(1+rt)^2(2-tA)^2}U^I_1\right),\\
\mathscr{B}^{1K}_J=&q^K_{PQ}\left(\frac{2rt^{\frac{3}{2}-\lambda}U^L_4}{\mu(1+rt)^2} \right)\delta^{(P}_I\delta^{Q)}_J\left( - \frac{2t^{\frac{3}{2}-\lambda}r(2-2t+t^2A)}{\mu(2-tA)(1+tr)^2}U^I_1\right.\\ 
&\hspace{0.5cm}\left. 
-\frac{4rt^{1-\lambda}[(1-rt)(1+r)^3+At(t-1)(1-r-t(r+6r^2+r^3)+t^2(r^4-r^3))]}{\mu(2-tA)(1+r)^3(1+tr)^3}U^I_4
\right),\\
\mathscr{B}^{\Sigma K}_J=&q^K_{PQ}\left(\frac{2rt^{\frac{3}{2}-\lambda}U^L_4}{\mu(1+rt)^2} \right)\delta^{(P}_I\delta^{Q)}_J\left(-\frac{2rAt^{\frac{3}{2}-\lambda}}{\mu(2-tA)(1+rt)^2}\gsl^{\Sigma\Lambda}U^I_\Lambda \right)
\intertext{and}
\mathscr{B}^{4K}_J=&q^K_{PQ}\left(\frac{2rt^{\frac{3}{2}-\lambda}U^L_4}{\mu(1+rt)^2} \right)\delta^{(P}_I\delta^{Q)}_J\left(-\frac{2t^{\frac{5}{2}-\lambda}r(1-tr)^2A}{\mu(2-tA)(1+rt)^4}U^I_4 \right. \\
&\hspace{3cm}\left. 
-\frac{4rAt^{\frac{3}{2}-\lambda}(t-1)[1-r-t(r+6r^2+r^3)+t^2(r^4-r^3)]}{\mu(2-tA)(1+r)^3(1+rt)^3}U^I_0\right).
\end{align*}

From \eqref{defA} and the above formulas, it is clear that
\begin{equation} \label{SchwBsczero}
\Bsc = \Ord(U) \quad \text{for $\lambda \leq 1$.}
\end{equation}
Moreover, we see, using the change of radial coordinate \eqref{Mrhodef}, that
we can express \eqref{SchwaveA} as
\begin{equation}
B^0\partial_t U+ \frac{\rho}{m}B^1 \partial_{\rho}U +B^{\Gamma}\nablasl{\Gamma}U=\frac{1}{t}\mathcal{B}U+F
\label{SchwaveB}
\end{equation} 
where now any $r$ appearing in the coefficients is
replaced using \eqref{Mrhodef}.

\subsubsection{The extended system}
Proceeding
in a similar fashion as above for wave equations
on Minkowski spacetime, we consider an extended version
of \eqref{SchwaveB} given by
\begin{equation}
\Bt^0\partial_t U+ \frac{\chi\rho}{m}\Bt^1 \partial_{\rho}U +\Bt^{\Gamma}\nablasl{\Gamma}U=\frac{1}{t}\tilde{\Bc}U+
\chi F
\label{SchwaveC}
\end{equation}
where $\chi$ is the cut-off function defined above by
\eqref{chirho0def},
\begin{align}
\Bt^\mu &= B^\mu_*+\chi(B^\mu-B_*^\mu), \qquad \mu=0,1,2,3,4,
\label{Btdef}\\
\tilde{\Bc}&= \Bc_*+ \chi(\Bc-\Bc_*),
\label{Bctdef}
\end{align}
and we are employing the notation
\begin{equation} \label{*def}
(\cdot)_* = (\cdot)|_{\rho=0}.
\end{equation}
Assuming that
\begin{equation*}
0< \rho_0< \frac{1}{3},
\end{equation*}
$|\rho|<3\rho_0$ implies, via \eqref{Mrhodef}, that
$|r|<1$ and we see from the definitions \eqref{Btdef}, \eqref{Bctdef},
and the formulas \eqref{chirho0def}, \eqref{defA}, \eqref{newA},
and \eqref{SchwaveAB0}-\eqref{SchwaveAF} that the
extended system \eqref{SchwaveC} is well-defined on the extended spacetime $(0,1)\times T^1_{3\rho_0} \times \mathbb{S}^2$ (see \eqref{MMscdef}) and agrees with the original system \eqref{SchwaveB} when restricted to the region
$\Mcal_{\rho_0}=(0,1)\times (0,\rho_0)\times \mathbb{S}^2$.

Assuming that
\begin{equation*}
m\in \Nbb_{\geq 2} \AND 0<\eta < 1,
\end{equation*}
it then follows from \eqref{Mrhodef}, \eqref{defA},
\eqref{newA}, \eqref{SchwaveAB0}, \eqref{SchwaveABcal}
and Taylor's Theorem that there exists a constant
\begin{equation} \label{Kdef}
C=C(m,\eta)>0,
\end{equation}
such that
\begin{align}
|B^0-B_*^0|&\leq C|\rho|^2, \label{pertB0}\\
|\del{\rho}B^0|&\leq C|\rho|, \label{delB0} \\
|\Bc-\Bc_*|&\leq C|\rho|^2 \label{pertBc}
\intertext{and}
|\del{\rho}\Bc|&\leq C|\rho| \label{delBc}
\end{align}
for all $(t,\rho,x^\Lambda)\in (0,1)\times (-\eta,\eta)\times \mathbb{S}^2$. 
Fixing
\begin{equation*}
\sigma >0,
\end{equation*}
we can, by \eqref{Btdef}, \eqref{Bctdef}, \eqref{pertB0} and \eqref{pertBc}, ensure that
\begin{equation} \label{pertBt0}
|\Bt^0-B^0_*|\leq |\chi||B^0-B^0_*|\leq |B^0-B^0_*| < \sigma |\rho|
< \sigma 
\end{equation}
and
\begin{equation}\label{pertBct}
|\tilde{\Bc}-\Bc_*| \leq |\chi| |\Bc-\Bc_*| \leq |\Bc-\Bc_*|< \sigma |\rho| < \sigma
\end{equation}
for all $(t,\rho,x^\Lambda)\in (0,1) \times (-3\rho_0,3\rho_0)\times \mathbb{S}^2$
by choosing $\rho_0$ so that
\begin{equation} \label{rho0fix}
    0<\rho_0 < \min\biggl\{\frac{\eta}{3},\frac{\sigma}{3 C(m,\eta)}\biggr\}.
\end{equation}
Moreover, evaluating evaluating \eqref{SchwaveAB0} and \eqref{SchwaveABcal}
at $\rho=0$ yields, we find, with the help of \eqref{Mrhodef}, \eqref{defA} and \eqref{newA}, that
\begin{align}
B^0_*&=\begin{pmatrix}
\begin{displaystyle}\frac{1}{2}\delta^K_J\end{displaystyle} &0&0&0\\
0&2\delta^K_J&0&0\\
0&0&\begin{displaystyle}\frac{2}{2-t}\delta^\Sigma_\Lambda\delta^K_J
\end{displaystyle}&0\\
0&0&0&\delta^K_J
\end{pmatrix} \label{SchwaveAB0*}
\intertext{and}
\mathcal{B}_*&=\begin{pmatrix}
\begin{displaystyle}\frac{\frac{1}{2}\left(\lambda+\frac{1}{2}\right)(2-t)-
(1-t)}{2-t}\end{displaystyle}\delta^K_J&0&0&0\\
0&2\lambda\delta^K_J&0&0\\
0&0&\lambda\begin{displaystyle} \frac{2}{2-t}\delta^\Sigma_\Lambda\delta^K_J\end{displaystyle}&0\\
\begin{displaystyle}
\frac{1}{2}\delta^K_J
\end{displaystyle}&0&0&\biggl(\lambda-\frac{1}{2}\biggr)\delta^K_J\\
\end{pmatrix}. \label{SchwaveABcal*}
\end{align}

From \eqref{Mhdef} and \eqref{SchwaveAB0*}, we then have
\begin{equation*}
h(V,B^0_*V)\geq \frac{1}{2} h(V,V)
\end{equation*}
for all $V$ of the form \eqref{Vdef}. By choosing $\sigma>0$
sufficiently small, we deduce from the above inequality and the estimate \eqref{pertBct} that
\begin{equation} \label{Bt0bnd}
h(V,\Bt^0V)\geq \frac{1}{4} h(V,V)
\end{equation}
on $(0,1) \times T^1_{3\rho_0}\times \mathbb{S}^2$. From 
this inequality and the obvious symmetry of the $\Bt^\mu$, $\mu=0,1,2,3,4$, with respect to the inner-product \eqref{Mhdef}, we conclude that 
the extended system \eqref{SchwaveC} is symmetric hyperbolic. Furthermore,
decomposing the boundary of $\Mcal_{\rho_0}=(0,1)\times (0,\rho_0)\times \mathbb{S}^2$ as
\begin{equation*}
    \Mcal_{\rho_0} = \Sigma^- \cup \Sigma^+ \cup \Gamma^{-} \cup \Gamma^{+},
\end{equation*}
where
\begin{gather*}
\Sigma^{-} = \{1\}  \times (0,\rho_0)\times \mathbb{S}^2,\quad
\Sigma^{+} = \{0\}  \times (0,\rho_0)\times \mathbb{S}^2,\\
\Gamma^{-} = [0,1]  \times \{0\}\times \mathbb{S}^2
\AND
\Gamma^{+} = [0,1]  \times \{\rho_0\} \times \mathbb{S}^2,
\end{gather*}
it is clear that
\begin{equation*}
    n^{\pm} = \pm d\rho
\end{equation*}
define outward pointing co-normals to $\Gamma^{\pm}$,
\begin{align*}
\Bigl(n^{-}_0 \Bt^0+n_1^{-}\frac{\chi\rho}{m}\Bt^1 +
n_\Gamma^{-}\Bt^\Gamma \Bigr)\Bigl|_{\Gamma^{-}}&= 0
\intertext{and}
\Bigl(n^{+}_0 \Bt^0+n_1^{+}\frac{\chi\rho}{m}\Bt^1 +
n_\Gamma^{+}\Bt^\Gamma \Bigr)\Bigl|_{\Gamma^{+}}&\oset{\eqref{Btdef}}{=}
\frac{\rho_0}{m}B^1\bigl|_{\Gamma^{+}}.
\end{align*}
From these expression and \eqref{SchwaveAB1}, we deduce that
\begin{equation*}
h\Bigl(V,\Bigl(n^{-}_0 \Bt^0+n_1^{-}\frac{\chi\rho}{m}\Bt^i +
n_\Gamma^{-}\Bt^\Gamma \Bigr)V\Bigr)\Bigl|_{\Gamma^{-}} = 0
\end{equation*}
and
\begin{align*}
h\Bigl(V,\Bigl(n^{+}_0
&\Bt^0+n_1^{+}\frac{\chi\rho}{m}\Bt^1 +
n_\Gamma^{+}\Bt^\Gamma \Bigr)V\Bigr)\Bigl|_{\Gamma^{+}}  =
\frac{\rho_0}{m}h(V,B^1 V)\bigl|_{\Gamma^{+}}  \\
&=\frac{\rho_0}{m}\biggl[ \frac{2t-2-t^2A}{2-tA}\delta_{IJ}\biggl(\frac{1}{\sqrt{2}} \frac{V^I_0}{t^{\frac{1}{2}}}+\sqrt{2}V^I_1\biggr)
\biggl(\frac{1}{\sqrt{2}} \frac{V^J_0}{t^{\frac{1}{2}}}+\sqrt{2}V^J_1\biggr)- \frac{2 A}{2-t A}\gsl^{\Lambda\Sigma}
\delta_{IJ}V^I_\Lambda V^J_\Sigma\biggr]\biggl|_{\Gamma^{+}}  \leq 0.
\end{align*}
By definition, see \cite[\S 4.3]{Lax:2006}, this shows that the surfaces $\Gamma^{\pm}$ are weakly spacelike for the extended system \eqref{SchwaveC}, and hence that any solution of \eqref{SchwaveC} on  the extended
spacetime $(0,1)\times T^1_{3\rho_0}\times \mathbb{S}^2$ will determine a
solution of the original system 
\eqref{SchwaveB} on the region $(0,1)\times (0,\rho_0)\times \mathbb{S}^2$ via restriction that is uniquely determined by initial data on $\{1\}\times (0,\rho_0)\times \mathbb{S}^2$. We therefore conclude that solutions to the system of semilinear wave equations \eqref{SchMtwave} on the regions of the form \eqref{SchMr0def} in a Schwarzschild spacetime can
be obtained from solving the initial value problem 
\begin{align} 
\Bt^0 \del{t} U + \frac{\chi\rho}{m} \Bt^1 \del{\rho} U + \Bt^\Gamma \nablasl{\Gamma} U &= \frac{1}{t}\tilde{\Bc} U +\chi F  && \text{in $(0,1) \times T^{1}_{3\rho_0}\times \mathbb{S}^2$,}  \label{SchwaveD.1}\\
U &= \mathring{U} && \text{in $\{1\} \times T^{1}_{3\rho_0}\times \mathbb{S}^2$,}  \label{SchwaveD.2}
\end{align} 
where the solution to \eqref{SchMtwave} generated this way are independent of the particular form of the initial data $\mathring{U}$ on the region $(\{1\}\times T^{1}_{3\rho_0}\times\mathbb{S}^2)\setminus (\{1\}\times (0,\rho_0)\times\mathbb{S}^2)$.

We now want to conclude existence of solutions to the IVP
\eqref{SchwaveD.1}-\eqref{SchwaveD.2} via an application of Theorem \ref{symthm}.
However, in order to do this, we must first verify a number of structural conditions. We proceed by noting from
\eqref{SchwaveABcal*} that
\begin{align*}
h(V,\Bc_*V) =\delta_{IJ}\biggl(\biggl(\frac{1}{2}\biggl(\lambda+\frac{1}{2}\biggr)-\frac{1-t}{2-t}\biggr)V^I_0 V^J_0 + \frac{1}{2}V^I_0V^J_4
+2\lambda V^I_1 V^J_1+\frac{2\lambda}{2-t}\gsl^{\Lambda\Sigma}V^I_\Lambda V^J_\Sigma 
+\biggl(\lambda-\frac{1}{2}\biggr)V_4^I V_4^J\biggr).
\end{align*}
From this and the inequality $\bigl|\frac{1}{2}\delta_{IJ}V^I_0 V^J_4\bigr|\leq \frac{\ep^2}{2}\delta_{IJ}\frac{V^I_0}{2}\frac{V^J_0}{2}+\frac{1}{2\ep^2}\delta_{IJ}V^I_4 V^J_4$, $\ep>0$, 
we obtain
\begin{align*}
h(V,\Bc_*V) \geq \delta_{IJ}\biggl(\biggl(\frac{1}{2}\biggl(\lambda+\frac{1}{2}\biggr)-\frac{1-t}{2-t}-\frac{\ep^2}{8}\biggr)V^I_0V^J_0
+2\lambda V^I_1V^J_1+\frac{2\lambda}{2-t}\gsl^{\Lambda\Sigma}V^I_\Lambda V^J_\Sigma 
+\biggl(\lambda-\frac{1}{2}-\frac{1}{2\ep^2}\biggr)V^I_4V^J_4
\biggr),
\end{align*}
and hence, by setting $\ep=2^{\frac{1}{4}}$, that
\begin{align*}
h(V,\Bc_*V) \geq \delta_{IJ}\biggl(\frac{1}{2}\biggl(\kappa+1-\frac{2(1-t)}{2-t}\biggr)V^I_0V^J_0
+2\lambda V^I_1V^J_1+\frac{2\lambda}{2-t}\gsl^{\Lambda\Sigma}V^I_\Lambda V^J_\Sigma 
+\kappa V^I_4V^J_4 \biggr),
\end{align*}
where $\kappa$ is as defined previously by \eqref{Mkappadef}.
But $\frac{2(1-t)}{2-t}\leq 1$ for $0\leq t \leq 1$ and $\lambda >\kappa$, and so we conclude from the above inequality, \eqref{Mhdef} and \eqref{SchwaveAB0*} that
\begin{equation*}
h(V,\Bc_*V) \geq \kappa h(V,B^0_* V)
\end{equation*}
on $(0,1)\times (-1,1)\times \mathbb{S}^3$. Fixing $\sigmat>0$, it follows
from this inequality and
the estimates
\eqref{pertBt0} and  \eqref{pertBct} that we can guarantee that
\begin{equation} \label{Bctbnd}
h(V,\tilde{\Bc}V) \geq (\kappa-\sigmat) h(V,\Bt^0 V)
\end{equation}
on $(0,1)\times T^1_{3\rho_0}\times \mathbb{S}^2$ by choosing $\sigma>0$
small enough.

Next, setting $\mu=0$ in \eqref{Btdef} and differentiating with respect to $\rho$ shows,
with the help of \eqref{chirho0def}, that
\begin{equation*}
\del{\rho}\Bt^0 = \hat{\chi}'\biggl(\frac{\rho}{\rho_0}\biggr)
\frac{B^0-B^0_*}{\rho_0} + \chi \del{\rho}B^0.
\end{equation*}
Using \eqref{delB0} and \eqref{pertBt0}, 
we obtain from the above expression the estimate 
\begin{equation} \label{delBt0}
|\del{\rho}\Bt^0| = \norm{\hat{\chi}'}_{L^\infty(\Rbb)} \frac{|B^0-B^0_*|}{\rho_0}
+|\chi||\del{\rho}B^0| < \bigl(3\norm{\hat{\chi}'}_{L^\infty(\Rbb)}+ 1\bigr)\sigma
\end{equation}
that holds for all $(t,\rho,x^\Lambda)\in (0,1)\times (-3\rho_0,3\rho_0)\times \mathbb{S}^2$.
Additionally, we find, using similar arguments this time
starting from
the estimates \eqref{delBc} and \eqref{pertBct}, that
\begin{equation} \label{delBct}
|\del{\rho}\tilde{\Bc}|  < \bigl(3\norm{\hat{\chi}'}_{L^\infty(\Rbb)}+ 1\bigr)\sigma
\end{equation}
for all $(t,\rho,x^\Lambda)\in (0,1)\times (-3\rho_0,3\rho_0)\times \mathbb{S}^2$.
Appealing again to Taylor's Theorem, it is not difficult to verify from \eqref{Mrhodef}, \eqref{defA},
\eqref{newA}, \eqref{SchwaveAB1} and \eqref{SchwaveABGamma}
that 
\begin{align*}
|tB^i-tB_*^i|&\leq C|\rho|^2,\qquad i=1,2,3, %\label{pertBi}
\intertext{and}
|\del{\rho}(tB^i)|&\leq C|\rho| %\label{delBi}
\end{align*}
for all $(t,\rho,x^\Lambda)\in (0,1)\times (-\eta,\eta)\times \mathbb{S}^2$
where we can take $C$ to be the same constant as above, see \eqref{Kdef}. 
Using these estimates, the same arguments that lead to the
estimates \eqref{pertBt0}, \eqref{pertBct}, \eqref{delBt0} and
\eqref{delBct} can be used to show that
\begin{gather}
|t\Bt^i-tB^i_*| \leq |tB^i-tB^i_*|<\sigma|\rho|<\sigma  \label{pertBti}
\intertext{and}
|\del{\rho}(t\Bt^i)| < \bigl(3\norm{\hat{\chi}'}_{L^\infty(\Rbb)}+ 1\bigr)\sigma \label{delBti}
\end{gather}
for all $(t,\rho,x^\Lambda)\in (0,1) \times (-3\rho_0,3\rho_0)\times \mathbb{S}^2$ provided that $\rho_0$ satisfies \eqref{rho0fix}.
Finally, differentiating $\frac{\chi\rho}{m}t\Bt^1$ with respect to 
$\rho$ gives
\begin{equation*}
\del{\rho}\biggl(t\frac{\chi\rho}{m}\Bt^1\biggr)
= \frac{1}{m}\biggl[\biggl(\hat{\chi}'\biggl(\frac{\rho}{\rho_0}\biggr)
\frac{\rho}{\rho_0}+ \chi\biggr)\bigl(tB^1_*+(t\Bt^1-tB^1_*)\bigr)+ \chi\rho t\del{\rho}\Bt^1\biggr]
\end{equation*}
from which we see, with the help of \eqref{pertBti} and
\eqref{delBti} for $i=1$, that
\begin{equation*}
\biggl|\del{\rho}\biggl(t\frac{\chi\rho}{m}\Bt^1\biggr)\biggr|
< \frac{1}{m}\bigl(3\norm{\hat{\chi}'}_{L^\infty(\Rbb)}+ 1\bigr)
\Bigl(\sup_{0<t<1}|tB^1_*(t)|+ 2\sigma\Bigr)
\end{equation*}
for all $(t,\rho,x^\Lambda)\in (0,1) \times (-3\rho_0,3\rho_0)\times \mathbb{S}^2$. Choosing $m\geq 2$ large enough
so that
\begin{equation*}
\frac{1}{m}\sup_{0<t<1}|tB^1_*(t)|  < \sigma,
\end{equation*}
the above estimate implies that the inequality
\begin{equation} \label{delBt1}
\biggl|\del{\rho}\biggl(t\frac{\chi\rho}{m}\Bt^1\biggr)\biggr|
< 3\bigl(3\norm{\hat{\chi}'}_{L^\infty(\Rbb)}+ 1\bigr)\sigma
\end{equation}
also hold for all $(t,\rho,x^\Lambda)\in (0,1) \times (-3\rho_0,3\rho_0)\times \mathbb{S}^2$.

\subsubsection{Global existence}
In the following, we choose $\sigma>0$ small enough and
$m\in \Zbb_{\geq 2}$ large enough so that the inequalities
\eqref{Bt0bnd}, \eqref{Bctbnd}, \eqref{delBt0}, \eqref{delBct},
\eqref{delBti} and \eqref{delBt1} all hold on $(0,1)\times T_{3\rho_0}\times \mathbb{S}^2$
for $\rho_0$ satisfying \eqref{rho0fix} and $\lambda$ satisfying \eqref{lambdafix} (so that $\kappa >0$), and the constant
$\upsilon$ is chosen to lie in the interval $(0,\kappa)$.
Then, from the Cauchy stability
property satisfied by symmetric hyperbolic systems,
we deduce, for any given $t_0\in (0,1)$, the existence of a unique solution 
\begin{equation*}
U\in C^0\bigl((t_0,1],H^k( T^{1}_{3\rho_0}\times \mathbb{S}^2)\bigr)\cap L^\infty\bigl((t_0,1],H^k( T^{1}_{3\rho_0}\times \mathbb{S}^2)\bigr)\cap C^1\bigl((t_0,1],H^{k-1}(T^{1}_{3\rho_0}\times \mathbb{S}^2)\bigr)
\end{equation*}
to \eqref{SchwaveD.1} provided that
 $k\in \Zbb_{>3/2}$ and the initial
data $U|_{t=1} = \mathring{U} \in H^k( T^{1}_{3\rho_0}\times \mathbb{S}^2)$ is chosen small enough. 
Moreover, by
standard results, this solution will satisfy an energy estimate
of the form
\begin{equation*}
\norm{U(t)}_{H^k}^2+ \int_{t}^1 \frac{1}{\tau} \norm{U(\tau)}_{H^k}^2\, d\tau   \leq C\bigl( \norm{U}_{L^\infty((t_0,0],H^k)}\bigr) \norm{\mathring{U}}_{H^k}^2, \qquad t_0<t\leq 1,
\end{equation*}
and the norm $\norm{U(t_0)}_{H^k}$
can be made as small as we like by choosing the initial
data $\mathring{U}$ at $t=1$
suitably small.

To continue this solution from $t=t_0$ to
$t=0$, we
now assume that $k\in \Zbb_{>9/2}$ and choose the initial data
$\mathring{U}$ at $t=1$ small enough so that $\norm{U(t_0)}_{H^k}$ and $t_0$ can be taken to be sufficiently small. Then after performing the simple time transformation $t \mapsto -t$, it is not
difficult to verify from \eqref{SchwaveAB0}-\eqref{SchwaveABcal}, \eqref{SchwaveAFa}-\eqref{SchwBsczero},
\eqref{Btdef}-\eqref{*def}, \eqref{Bt0bnd}, \eqref{Bctbnd}
%, \eqref{delBt0}, 
-\eqref{delBct},
\eqref{delBti} and \eqref{delBt1} that the extended system\footnote{Note that the angular derivatives  $\nablasl{\Lambda} \tilde{\Bc}$, $\nablasl{\Lambda}\Csc$, 
$\nablasl{\Lambda}\bigl( \frac{\rho\chi}{m}\Bt^1\bigr)$, and $\nablasl{\Lambda}\Bt^\ell$, $\ell=0,2,3$, of  all
the $U$-independent matrices appearing in \eqref{SchwaveD.1} (see also \eqref{SchwaveAF} and \eqref{SchwaveACsc}) vanish.} \eqref{SchwaveD.1}, which we know is symmetric hyperbolic, satisfies all the assumptions from Section \ref{coeffassump} with $\Pbb = \id$, where, by choosing $\sigma>0$ sufficiently small and $m\geq 2$
sufficiently large, 
the constants $\beta_1$ and  $\mathtt{b} = \tilde{\mathtt{b}}$ from Theorem \ref{symthm} can be made as small as we like while the constants $\lambda_1$ and $\alpha$
vanish. This allows us to apply Theorem \ref{symthm} to
\eqref{SchwaveD.1} with initial data given by $U(t_0)$ at
$t=t_0$ to
obtain the existence of a unique solution

\begin{equation*}
U\in C^0\bigl((0,t_0],H^k( T^{1}_{3\rho_0}\times \mathbb{S}^2)\bigr)\cap L^\infty\bigl((0,t_0],H^k( T^{1}_{3\rho_0}\times \mathbb{S}^2)\bigr)\cap C^1\bigl((0,t_0],H^{k-1}(T^{1}_{3\rho_0}\times \mathbb{S}^2)\bigr)
\end{equation*}
that satisfies the energy
\begin{equation*}
\norm{U(t)}_{H^k}^2+ \int_{t}^1 \frac{1}{\tau} \norm{U(\tau)}_{H^k}^2\, d\tau   \leq C\bigl( \norm{U}_{L^\infty((1,t_0],H^k)}\bigr) \norm{U(t_0)}_{H^k}^2,
\qquad 0<t\leq t_0,
\end{equation*}
and decay
\begin{equation*}
\norm{U(t)}_{H^{k-1}} \lesssim 
t^{\kappa-\upsilon}, \qquad 0<t\leq t_0,
\end{equation*}
for any given $\upsilon >0$.
This establishes the existence of a unique solution 
of the IVP \eqref{SchwaveD.1}-\eqref{SchwaveD.2}, which
completes the proof of the following theorem.

\begin{thm} \label{SchGlobalthm}
Suppose $k\in \Zbb_{>9/2}$, $\upsilon>0$, $\lambda \in \bigl(\frac{1}{4}(2+\sqrt{2}),1\bigr]$ and $\kappa =  \lambda -\frac{1}{4}(2+\sqrt{2})$. Then there exist $\rho_0 \in (0,1)$,  $m\in \Zbb_{\geq 2}$ and $\delta >0$
such that if $\mathring{U} \in H^k(T^{1}_{3\rho_0}\times \mathbb{S}^2)$ is chosen so that
$\norm{\mathring{U}}_{H^k} < \delta$,
then there exists a unique solution 
\begin{equation*}
U\in C^0\bigl((0,1],H^k( T^{1}_{3\rho_0}\times \mathbb{S}^2)\bigr)\cap L^\infty\bigl((0,1],H^k( T^{1}_{3\rho_0}\times \mathbb{S}^2)\bigr)\cap C^1\bigl((0,1],H^{k-1}(T^{1}_{3\rho_0}\times \mathbb{S}^2)\bigr)
\end{equation*}
of the IVP \eqref{SchwaveD.1}-\eqref{SchwaveD.2} that satisfies the energy
\begin{equation*}
\norm{U(t)}_{H^k}^2+ \int_{t}^1 \frac{1}{\tau} \norm{U(\tau)}_{H^k}^2\, d\tau   \leq C\bigl( \norm{U}_{L^\infty((1,0],H^k)}\bigr) \norm{\mathring{U}}_{H^k}^2
\end{equation*}
and decay
\begin{equation*}
\norm{U(t)}_{H^{k-1}} \lesssim 
t^{\kappa-\upsilon}
\end{equation*}
estimates for $0<t\leq 1$.
\end{thm}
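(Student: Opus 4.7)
The plan is to mirror the strategy used for Theorem \ref{MGlobalthm} on Minkowski spacetime: first invoke the standard Cauchy stability theory for symmetric hyperbolic systems to produce a solution on an interval $(t_0,1]$ with $t_0>0$ arbitrarily small, then reduce the continuation problem on $(0,t_0]$ to an application of Theorem \ref{symthm} with $\Pbb=\id$ (so $\Pbb^\perp=0$), after a trivial time reversal $t\mapsto -t$ placing the singular time at the future endpoint. The symmetric hyperbolicity of \eqref{SchwaveD.1} is guaranteed by the symmetry of $\Bt^\mu$ with respect to the inner-product \eqref{Mhdef} together with the positivity bound \eqref{Bt0bnd}, which in turn follows from picking $\sigma>0$ small in the cut-off construction \eqref{Btdef}--\eqref{*def}.

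Next, I would check that all five assumptions (i)--(v) of Section \ref{coeffassump} hold for the extended system on $(0,t_0]\times T^1_{3\rho_0}\times \mathbb{S}^2$. With $\Pbb=\id$, the $\Pbb^\perp$-conditions are vacuous, so only the $\Pbb$--$\Pbb$ blocks need verification. The structure of $\Bt^0$, $\tilde{\Bc}$, $\tfrac{\chi\rho}{m}\Bt^1$, $\Bt^\Gamma$ and the decomposition \eqref{SchwaveAFa} of $F$ (in particular \eqref{SchwBsczero} for $\lambda\le 1$) make the required algebraic identities straightforward once the constant $\lambda$ is fixed in the admissible range $\bigl(\tfrac{1}{4}(2+\sqrt 2),1\bigr]$. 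The coefficient bound \eqref{Bctbnd} yields the hypothesis \eqref{B0BCbnd} with constant $\kappa-\sigmat$, arbitrarily close to $\kappa=\lambda-\tfrac{1}{4}(2+\sqrt 2)$.

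The main obstacle—and the reason for the presence of both the cut-off parameter $\rho_0$ and the radial rescaling exponent $m$—is to verify the smallness hypothesis \eqref{eq:symhypkappa} of Theorem \ref{symthm}. The relevant constants are $\beta_1$ (controlling the $\Pbb \Div\!B\Pbb$ block) and $\mathtt{b}=\tilde{\mathtt{b}}$ (controlling the principal spatial derivative terms), while $\lambda_1=\alpha=0$. These constants are estimated by $\|\partial_\rho\Bt^0\|_{L^\infty}$, $\|\partial_\rho\tilde{\Bc}\|_{L^\infty}$, $\|\partial_\rho(t\Bt^i)\|_{L^\infty}$ and $\bigl\|\partial_\rho\bigl(t\tfrac{\chi\rho}{m}\Bt^1\bigr)\bigr\|_{L^\infty}$. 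The estimates \eqref{delBt0}, \eqref{delBct}, \eqref{delBti} and \eqref{delBt1} show that, by first choosing $\sigma>0$ small (which by \eqref{rho0fix} forces $\rho_0$ small) and then taking $m\in \Zbb_{\ge 2}$ large, each of these norms can be made as small as desired. This forces $\beta_1$ and $\mathtt{b}=\tilde{\mathtt{b}}$ to be small, so \eqref{eq:symhypkappa} holds for $\kappa>0$ as chosen, and the decay exponent $\zeta$ from \eqref{eq:defzeta} can be arranged to satisfy $\zeta>\kappa-\upsilon$ for any prescribed $\upsilon\in(0,\kappa)$.

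Once these structural conditions are in place, I would pick the Cauchy data $\mathring{U}$ small enough (in $H^k$ with $k>9/2$) so that Cauchy stability delivers a solution on $(t_0,1]$ whose $H^k$-norm at $t_0$ is below the smallness threshold required by Theorem \ref{symthm}, and $\Ft\equiv 0$ in the present application so its $H^k$-norm is trivially small. Applying Theorem \ref{symthm} (in the time-reversed formulation) on $(0,t_0]$ extends the solution uniquely all the way to $t=0$ with the stated energy estimate and, since $\Pbb=\id$, the decay estimate $\|U(t)\|_{H^{k-1}}\lesssim t^{\zeta-\sigma'}\lesssim t^{\kappa-\upsilon}$. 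Concatenating the two solutions via the uniqueness statement on the overlap yields the desired global solution on $(0,1]$ and completes the proof.
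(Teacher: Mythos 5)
Your proposal is correct and follows essentially the same route as the paper: Cauchy stability on $(t_0,1]$, verification of the assumptions of Section \ref{coeffassump} with $\Pbb=\id$ after the time reversal $t\mapsto -t$, the smallness of $\beta_1$ and $\mathtt{b}=\tilde{\mathtt{b}}$ (with $\lambda_1=\alpha=0$) via the estimates \eqref{delBt0}, \eqref{delBct}, \eqref{delBti} and \eqref{delBt1} obtained by taking $\sigma$ small, $m$ large and $\rho_0$ as in \eqref{rho0fix}, and finally an application of Theorem \ref{symthm} on $(0,t_0]$ concatenated with the local solution.
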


\begin{rem} \label{Schregrem}
The regularity assumption on the initial data
in Theorem \ref{SchGlobalthm} is not optimal since the proof does not take into account the semilinearity of equation \eqref{SchwaveD.1}. As was discussed above in Remark \ref{regrem}, regularity improvements can be obtained by establishing a version of Theorem \ref{symthm} that is adapted to semilinear equations. Furthermore,  
due to the appearance of the derivative $\rho\del{\rho}$ in the evolution equation \eqref{SchwaveD.1}, the proof of Theorem \ref{MGlobalthm} can also be easily adapted to establish the existence and uniqueness of solutions in the weighted spaces $\Hsc^\alpha_k$ defined in \cite{ChruscielLengard:2005}. As indicated above
in Remark \ref{regrem}, this type of generalization allows for initial data and solutions that have polyhomogeneous behavior in $\rho$ as $\rho\searrow 0$, and a proof of the adaptation of Theorem \ref{MGlobalthm} to the weighted  $\Hsc^\alpha_k$ spaces will be given in a separate article.
\end{rem}

\subsection{Perfect fluids on Kasner spacetimes}
In this section, we demonstrate that Theorem~\ref{symthm} gives rise to a nonlinear stability result for certain cosmological fluid solutions near the big bang singularity. The particular family of solutions of the coupled Einstein-Euler system whose stability we are interested in here has been constructed in \cite{beyer2017} by solving a singular initial value problem (SIVP). This family of solutions is infinite dimensional, but it is not clear whether it is an \emph{open} family (with respect to some natural topology) and therefore stable under perturbations. In order to keep the level of technicalities reasonably low here,   
we shall make three simplifying assumptions: (i) we study the Euler equations on the specific class of fixed cosmological background spacetimes with big bang singularities below (as opposed to the \emph{coupled} Einstein-Euler case), (ii), we consider the same symmetry class as in \cite{beyer2017} -- so-called \emph{Gowdy symmetry} -- and, (iii), we restrict to the case $\Gamma>0$, where $\Gamma$ is introduced in \eqref{eq:defGamma} below.

A \emph{Kasner spacetime} is a spatially homogeneous, but in general, highly anisotropic, solution $(M,g)$ of Einstein's vacuum equation for $M=(-\infty,0)\times\Sigma$ with $\Sigma=\Tbb^3$ and
\begin{equation}
\label{Kasner-k}
g = (-\tau)^{\frac{K^2-1}{2}} \big( - d \tau\otimes d\tau + dx\otimes dx \big) + (-\tau)^{1-K} dy\otimes dy +(-\tau)^{1+K} dz\otimes dz, 
\end{equation}
with $\tau \in (-\infty,0)$ and $x,y,z \in (0,2\pi)$. The free parameter $K\in\mathbb R$  is often referred to as the \emph{asymptotic velocity}. 
We remark that the coordinate transformation 
\[\tilde\tau=\frac{4}{K^2+3} (-\tau)^{\frac{K^2+3}4},\quad 
\tilde x=\left(\frac{K^2+3}{4}\right)^{\frac{K^2-1}{K^2+3}} x,\quad
\tilde y=\left(\frac{K^2+3}{4}\right)^{\frac{2(1-K)}{K^2+3}} y,\quad
\tilde z=\left(\frac{K^2+3}{4}\right)^{\frac{2(1+K)}{K^2+3}} z,\]
brings this metric to the more conventional form 
\[
g = -d\tilde\tau\otimes d\tilde\tau + \tilde\tau^{2p_1} d\tilde x\otimes d\tilde x  + \tilde\tau^{2p_2} d\tilde y\otimes d\tilde y  + \tilde\tau^{2p_3} d\tilde z\otimes d \tilde z,
\]
where 
\begin{equation*}
 p_1 =(K^2-1)/(K^2+3), \quad
 p_2 =2(1-K)/(K^2+3),\quad
 p_3 =2(1+K)/(K^2+3),
\end{equation*}
are the \emph{Kasner exponents}.
These satisfy the \emph{Kasner relations} $\sum_i p_i=0$ and $\sum_i p^2_i=1$.
Except for the three flat Kasner cases given by
$K=1$, $K=-1$, and (formally) $|K|\rightarrow \infty$, the Kasner metric has a curvature singularity in the limit $\tau\nearrow 0$. 
We assume our background spacetimes in the following to be Kasner spacetimes with, so far, 
arbitrary $K\in\mathbb R$. Observe that we do not consider the more general class of \emph{asymptotically local Kasner spacetimes} introduced in  \cite{beyer2017}.

Next, we write the Euler equations using the formalism in \cite{frauendiener2003,walton2005}. Without going into details (see also \cite{beyer2017}),
the fluid can be represented by a (in general not normalized single) 4-vector field $V$ that satisfies the following quasilinear symmetric hyperbolic system
\begin{equation}
\label{eq:AAA1}
0 ={A^\delta}_{\alpha\beta}\nabla_\delta V^\beta,
\quad 
A^\delta_{\alpha\beta}
 =\frac{3\gamma-2}{\gamma-1} \frac{V_\alpha V_\beta}{V^2} V^\delta+V^\delta g_{\alpha\beta}
  +2{g^\delta}_{(\beta} V_{\alpha)},\quad
V^2=-V_\alpha V^\alpha.
\end{equation}
This is equivalent to the Euler equations if we impose the linear equation of state
\begin{equation*}
  P=(\gamma-1)\rho
\end{equation*}
where the constant \emph{speed of sound} satisfies 
\begin{equation}
  \label{eq:speedsoundrestr}
  c_s^2=\gamma-1\in (0,1).
\end{equation}
The normalized fluid 4-vector field $u$, the fluid pressure $P$ and the fluid energy density $\rho$ can be calculated from $V$ as follows:
\begin{equation*}
u=\frac{V}{\sqrt{V^2}},\qquad
P= (V^2)^{-\frac {\gamma}{2(\gamma-1)}},
\qquad \rho=\frac{1}{\gamma-1} (V^2)^{-\frac {\gamma}{2(\gamma-1)}}.
\end{equation*}

Restricting to the same symmetry class considered in \cite{beyer2017}, we assume, with respect to the coordinates $(\tau,x,y,z)$ on the background Kasner spacetime $(M,g)$, that (I) the vector fields $\partial_y$ and $\partial_z$ are symmetries of the fluid, i.e., $[\partial_{y},V]=[\partial_z,V]=0$, and (II) that the fluid only flows into the $x$-direction\footnote{As discussed in \cite{beyer2017}, (II) necessarily follows from (I) in the case of the \emph{coupled} Einstein-Euler system. Since the background spacetime is fixed here, however, we could consider only assumption (I), but not (II). We do not do this here in order to keep this section brief.} $dy(V)=dz(V)=0$. The fluid variables of interest are therefore the two non-trivial coordinate components of the vector field $V=(V^0(\tau,x), V^1(\tau,x),0,0)$.
Under these assumptions, the Euler equations \eqref{eq:AAA1} take the form
  \begin{equation}
    \label{eq:Eulereqssymmhyp1}
    \begin{split}
    \bar B^0(V^0,V^1) &\partial_\tau \begin{pmatrix}
      V^0\\
      V^1
    \end{pmatrix}
    +\bar B^1(V^0,V^1) \partial_x \begin{pmatrix}
      V^0\\
      V^1
    \end{pmatrix}=
    \bar G(\tau,V^0,V^1)
  \end{split}
\end{equation}
with
\begin{align}
  \label{eq:Eulereqssymmhyp1B0}
  \bar B^0(v^0,v^1)
  &=\begin{pmatrix}
  {v^0} \left((v^0)^2+3 (v^1)^2 (\gamma -1)\right) & {v^1}
   \left((v^0)^2 (1-2 \gamma )-(v^1)^2 (\gamma -1)\right) \\
 {v^1} \left((v^0)^2 (1-2 \gamma )-(v^1)^2 (\gamma -1)\right) &
   {v^0} \left((\gamma -1) (v^0)^2+(v^1)^2 (2 \gamma -1)\right)
 \end{pmatrix},\\
  \label{eq:Eulereqssymmhyp1B1}
  \bar B^1(v^0,v^1)
  &=\begin{pmatrix}
    -{v^1} \left((1-2 \gamma) (v^0)^2-(v^1)^2 (\gamma -1)\right) &
   -{v^0} \left((v^0)^2 (\gamma -1) -(v^1)^2 (1-2 \gamma )\right) \\
 -{v^0} \left((v^0)^2 (\gamma -1) -(v^1)^2 (1-2 \gamma )\right) &
   {v^1} \left(3 (\gamma -1) (v^0)^2+(v^1)^2\right)
  \end{pmatrix},\\
  \label{eq:Eulereqssymmhyp1G}
  \bar G(\tau,v^0,v^1)&=\frac{\Gamma}\tau
  ((v^0)^2 - (v^1)^2)\begin{pmatrix}
    (v^0)^2 \\
    v^0 v^1 
  \end{pmatrix},
\end{align}
where
\begin{equation}
  \label{eq:defGamma}
  \Gamma=\frac 14\left(3 \gamma-2 - K^2 (2 - \gamma)\right).
\end{equation}
Notice that in contrast to the more complex situation in \cite{beyer2017}, $\Gamma$ is a constant here. It is also useful to observe that
\begin{equation}
  \label{eq:Gammabound}
  \Gamma<1
\end{equation}
if \eqref{eq:speedsoundrestr} holds. Motivated by the evidence in  \cite{beyer2017}  that the dynamics of solutions of the Euler equations is in general unstable if $\Gamma\le 0$, we restrict here to $\gamma\in (1,2)$ and $K\in\mathbb R$ such that 
\begin{equation}
  \label{eq:Gammabound2}
  \Gamma>0.
\end{equation}

As discussed above,  \cite{beyer2017} is concerned with the \emph{coupled} Einstein-Euler case. The Fuchsian analysis performed there, which involves solving a Fuchsian SIVP, also applies to the Euler equations on fixed Kasner backgrounds. In fact, the results obtained about the asymptotics in the latter case are slightly stronger. One can show that for each smooth positive function $V_*(x)$ and smooth function $V_{**}(x)$ (so-called \emph{asymptotic data}), there exist $\tau_0<0$ and a unique smooth solution $(V^0(\tau,x), V^1(\tau,x))$ of \eqref{eq:Eulereqssymmhyp1}-\eqref{eq:Eulereqssymmhyp1G} defined on $[\tau_0,0)\times\mathbb T^1$ such that
\begin{equation}
  \label{eq:fluidspecialasymptNN}
  V^0(\tau,x)=-V_*(x) (-\tau)^\Gamma+(-\tau)^{2\Gamma}W^0(\tau,x),\quad
  V^1(\tau,x)=(-\tau)^{2\Gamma}W^1(\tau,x),
\end{equation}
for some functions $W^0,W^1\in C^0([\tau_0,0],C^\infty(\mathbb T^1))$ with $W^1(0,x)=V_{**}(x)$. A particular example of such a solution is given by $V_*=\mathrm{const}$ and $V_{**}=0$ in which case $\tau_0=-\infty$ and $W^0=W^1=0$; this is a fluid which is at rest with respect to the foliation of Kasner symmetry surfaces.
The following theorem addresses nonlinear perturbations of an arbitrary solution of the form \eqref{eq:fluidspecialasymptNN}.

\begin{thm} 
  \label{thm:euler}
Suppose $\gamma\in (1,2)$, $\ep>0$, $K\in\mathbb R$ satisfies \eqref{eq:Gammabound2}, $k\in \Zbb_{>7/2}$,
$\tau_0<0$, %Consider the Euler equations, \eqref{eq:Eulereqssymmhyp1} -- \eqref{eq:Eulereqssymmhyp1G}, with  equation of state \eqref{eq:linEOS} defined on the Kasner spacetime \eqref{Kasner-k}. 
and 
\begin{equation*}
(\hat V^0(\tau,x),\hat V^1(\tau,x)), \quad 
(\tau,x)\in [\tau_0,0)\times \mathbb T^1,
\end{equation*} is 
a smooth solution of the Euler equations
\eqref{eq:Eulereqssymmhyp1}-\eqref{eq:Eulereqssymmhyp1G}
with the property that there is a smooth positive function $\hat V_*$ on $\mathbb T^1$ and functions $\hat W^0,\hat W^1\in C^0([\tau_0,0],C^\infty(\mathbb T^1))$ such that
\begin{equation}
  \label{eq:fluidspecialasympt3}
  \hat V^0(\tau,x)=-\hat V_*(x) (-\tau)^\Gamma+(-\tau)^{2\Gamma}\hat W^0(\tau,x),\quad
  \hat V^1(\tau,x)=(-\tau)^{2\Gamma}\hat W^1(\tau,x).
\end{equation}
Then there exists a $\delta>0$ such that if $|\tau_0|$ is sufficiently small and 
$V^0_0, V^1_0 \in H^k(\mathbb T^1)$ are chosen so
that $V^0_0>0$
on $\Tbb^1$ and
\[\|V^0_0-\hat V^0(\tau_0)\|_{H^k}+\|V^1_0-\hat V^1(\tau_0)\|_{H^k}< \delta,\]
then
there exist maps
\begin{equation}
\label{eq:eulerreg}
V^0, V^1 \in C^0\bigl([\tau_0,0),H^k(\mathbb T^1)\bigr)\cap L^\infty\bigl([\tau_0,0),H^k(\mathbb T^1)\bigr)\cap C^1\bigl([\tau_0,0),H^{k-1}(\mathbb T^1)\bigr)
\end{equation}
that define a unique solution on $[\tau_0,0)\times\Tbb^1$ of the IVP consisting of the Euler equations \eqref{eq:Eulereqssymmhyp1}-\eqref{eq:Eulereqssymmhyp1G}
and the initial condition
$(V^0,V^1)|_{\tau=\tau_0}=(V^0_0,V^1_0)$.

\noindent Moreover, $\lim_{\tau\nearrow 0} \tau^{-\Gamma} V^0(\tau)=V_*$ exists in $H^{k-1}(\Tbb^1)$ and
satisfies
\begin{equation}
  \label{eq:eulerpertdata}
  \|V_*-\hat V_*\|_{H^{k-1}}\lesssim \|V^0(\tau_0)-\hat V^0(\tau_0)\|_{H^k}+\|V^1(\tau_0)-\hat V^1(\tau_0)\|_{H^k},
\end{equation}
and the decay estimates
\begin{equation}
  \label{eq:eulerpertasymtp}
  \| V^1(\tau)\|_{H^{k-1}}\lesssim |\tau|^{2\Gamma(1-\epsilon)}\AND
  \|V^0(\tau)-V_*\|_{H^{k-1}}\lesssim |\tau|^{2\Gamma}
\end{equation}
hold for $\tau\in [\tau_0,0)$ for any $\epsilon>0$.
\end{thm}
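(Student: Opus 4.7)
The proof proceeds by casting the Euler system \eqref{eq:Eulereqssymmhyp1}--\eqref{eq:Eulereqssymmhyp1G} as a singular symmetric hyperbolic system of the form \eqref{symivp.1} (after the time transformation of Section~\ref{sec:timetrafos}) to which Theorem~\ref{symthm} can be applied. Motivated by the leading-order behavior \eqref{eq:fluidspecialasympt3}, introduce rescaled fluid variables
\begin{equation*}
u^0 := -(-\tau)^{-\Gamma} V^0, \qquad u^1 := V^1,
\end{equation*}
so that $u^0$ should converge to a nontrivial limit (identified with $V_*$) while $u^1$ decays. Set $u = (u^0,u^1)^{\tr}$ and $\Pbb = \mathrm{diag}(0,1)$, so that $\Pbb u = (0, u^1)^{\tr}$ captures the decaying mode and $\Pbb^\perp u = (u^0, 0)^{\tr}$ the mode with a limit. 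Multiplying \eqref{eq:Eulereqssymmhyp1} by $(-\tau)^{-3\Gamma}$ to regularize the cubic leading behavior of $\bar B^0,\bar B^1$ in $V^0$, substituting $\partial_\tau V^0 = \Gamma(-\tau)^{\Gamma-1}u^0 - (-\tau)^\Gamma \partial_\tau u^0$, and rearranging the singular term produced by the rescaling into the right-hand side (where it cancels, at the level of the reference solution, against the $\Gamma$-dependent part of $\bar G$) yields a system of the form $B^0 \partial_\tau u + B^1 \partial_x u = \frac{1}{\tau} \mathcal{B} \Pbb u + F$ on $\Sigma = \Tbb^1$.

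The crucial structural observations are: (a) $B^0(\tau, u)$ is positive definite near the reference solution, thanks to $V^0_0 > 0$ on $\Tbb^1$, continuity, and the closeness to $\hat V^0$ enforced by the smallness of the initial perturbation; (b) the leading-order principal matrix $\mathcal{B}$ is diagonal, commutes with $\Pbb$, and its $(2,2)$-entry equals $2\Gamma$, so that Theorem~\ref{symthm} produces decay at rate $\kappa = 2\Gamma > 0$ by \eqref{eq:Gammabound2}; and (c) since both components of $\bar G$ in \eqref{eq:Eulereqssymmhyp1G} carry the factor $(v^0)^2 - (v^1)^2$ and the second additionally $v^0 v^1$, the nonlinear parts of $F$ fit the structure $\Pbb F_2 = 0$, $\Pbb^\perp F_2 = \Ord(\Pbb u\otimes \Pbb u)$, $\Pbb F_1 = \Ord(u)$, $\Pbb^\perp F_1 = \Ord(\Pbb u)$ required by \eqref{fexp}--\eqref{F2bnd.3}. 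Residual singular factors of $(-\tau)^{-\Gamma}$ in $B^1$ and $F$ coming from substituting $V^0 = -(-\tau)^\Gamma u^0$ into the coefficients are absorbed by the time transformation $\tau = -(-t)^p$ of Section~\ref{sec:timetrafos} with $p\in(0,1]$ chosen according to $\Gamma$, casting the system into the form satisfying assumptions (i*)--(v*). The quantitative hypothesis \eqref{eq:symhypkappa} is then achieved by shrinking the perturbation and $|\tau_0|$, keeping $\lambda_i,\beta_j,\mathtt{b}$ small relative to $\kappa = 2\Gamma$.

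With the hypotheses verified, Cauchy stability for symmetric hyperbolic systems continues the local solution with initial data $(V^0_0, V^1_0)$ at $\tau_0$ up to some $\tau_1 \in (\tau_0, 0)$ with $\|u(\tau_1)\|_{H^k}$ as small as desired, at which point Theorem~\ref{symthm} (applied to the time-transformed system on $[\tau_1, 0)$) yields the regularity \eqref{eq:eulerreg} up to $\tau = 0$, the limit $V_*$ identified with $\Pbb^\perp u|_{\tau = 0}$, the decay estimates \eqref{eq:eulerpertasymtp} via \eqref{eq:asymptoticstrafo1}--\eqref{eq:asymptoticstrafo2} together with the improvements of Remark~\ref{Gdecay}, and the continuous dependence \eqref{eq:eulerpertdata} from the energy estimate. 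The main technical obstacle is the algebraic verification in the preceding paragraph: checking that the rescaled Euler system genuinely obeys the block-vanishing and commutator structure of (i*)--(v*), since this reflects the geometric fact that the Euler source $\bar G$ vanishes to second order as the fluid becomes comoving with the Kasner foliation ($v^1 \to 0$), which is precisely the mechanism by which $\Gamma > 0$ translates into the quantitative decay of $\Pbb u$.
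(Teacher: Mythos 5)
Your overall strategy (recast the Euler system in Fuchsian form with $\Pbb=\mathrm{diag}(0,1)$, perform a time change tied to $\Gamma$, and invoke Theorem \ref{symthm} plus Remark \ref{Gdecay}) is the same as the paper's, but your change of variables has two concrete defects that the paper's proof is specifically designed to avoid. First, you never subtract the background solution nor normalize by its leading datum. With $u^0=-(-\tau)^{-\Gamma}V^0$ one has $u^0\to V_*\approx \hat V_*>0$, an order-one positive function, so the initial data for your unknown cannot satisfy the smallness hypothesis $\norm{u_0}_{H^k}\leq\delta$ of Theorem \ref{symthm} (whose proof needs $\delta$ small to keep the solution in $B_R(V)$ and to make $\rho_k>0$). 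It also makes the stability estimate \eqref{eq:eulerpertdata} unreachable, since the theorem's energy estimate controls $u$ in terms of $u_0$, not a difference from a non-small reference; and the background asymptotics \eqref{eq:fluidspecialasympt3} never enter your argument beyond positivity of $B^0$, whereas they are needed to show the coefficients of the perturbation equation are regular at $t=0$. The paper handles all of this by setting $u^i=(V^i-\hat V^i)/(t e^{u_*})$ with $e^{u_*}=\hat V_*$ (see \eqref{eq:fluidtrafo1}--\eqref{eq:deflittleu} and \eqref{eq:backgroundUasympt}), so the unknown really is a small perturbation and \eqref{eq:eulerpertdata} follows from the energy estimate.

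Second, rescaling only $V^0$ while keeping $u^1=V^1$ breaks the coefficient structure. The matrices $\bar B^0,\bar B^1$ are cubic; after substituting $V^0=-(-\tau)^{\Gamma}u^0$, $V^1=u^1$ and multiplying by $(-\tau)^{-3\Gamma}$, the mixed terms (e.g. $v^0(v^1)^2$ and $(v^1)^3$ entries) acquire factors $(-\tau)^{-2\Gamma}$ and $(-\tau)^{-3\Gamma}$. In particular $B^0$ becomes unbounded as $\tau\nearrow 0$ for $u^1\neq 0$, violating \eqref{B0BCbnd} (and its analogue \eqref{B0BCbndtrafo} in (ii*)), and the singularities in $B^1$ can exceed the maximal $|t|^{-1}$ allowed in assumption (iv)/(iv*); the time transformation of Section \ref{sec:timetrafos} cannot absorb singular behaviour in $B^0$ at all, and only accommodates the specific powers $|t|^{-(1-p)},|t|^{-(1-p/2)},|t|^{-1}$, not arbitrary $(-\tau)^{-q\Gamma}$ factors. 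The paper avoids this by rescaling \emph{both} components by $t=-(-\tau)^{\Gamma}$, so that the cubic $\bar B^{\mu}$ produce exactly the factor $t^3$ cancelling the $t^{-3}$ prefactor in \eqref{eq:Eulereqssymmhyp2B0}--\eqref{eq:Eulereqssymmhyp2B1}, with $\tau/t$ bounded because $\Gamma<1$; Theorem \ref{symthm} is then applied directly with $\kappa$ close to $1$ (not $2\Gamma$), and the advertised rates $|\tau|^{2\Gamma(1-\epsilon)}$ and $|\tau|^{2\Gamma}$ come from combining the $|t|^{1-\epsilon}$ and $|t|$ decay of $u^1$ and $u^0-u^0(0)$ with the explicit factor of $t$ in the ansatz and the change of time, rather than from a principal matrix entry equal to $2\Gamma$ as you assert. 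To repair your proposal you would need to redo the algebra with the paper's normalization (or verify in detail that your unrescaled $u^1$ system satisfies (i*)--(v*) with \eqref{eq:symhypkappa}, which as written it does not).
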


Essentially, this theorem states that given an arbitrary solution of the Euler equations, which can be extended all the way to $\tau=0$ with the ``Fuchsian'' asymptotics \eqref{eq:fluidspecialasympt3}, any sufficiently small nonlinear perturbation thereof can also be extended all the way to $\tau=0$ and its asymptotics is similar in the following sense. For both the background solution and the perturbed solution, the limits $\hat V^0/ (-\tau)^\Gamma$ and $V^0/ (-\tau)^\Gamma$ exist and define the \emph{leading asymptotic datum} $\hat V_*$ and $V_*$, respectively. According to \eqref{eq:eulerpertdata}, the difference between these quantities is small which provides a notion of stability. Observe, however, that while the limit of $\hat V^1/ (-\tau)^{2\Gamma}$ of the background solution gives rise to the second asymptotic datum $\hat V_{**}$, the level of detail in \eqref{eq:eulerpertasymtp} provided by the techniques in this paper is not enough to conclude the existence of a second asymptotic datum $V_{**}$ for the perturbed solutions. Strictly speaking, our theorem does therefore not completely settle the question whether the set of solutions with full the ``Fuchsian'' asymptotics \eqref{eq:fluidspecialasympt3} is open. We plan to return to this problem in future work.

The rest of this subsection is devoted to the proof of Theorem~\ref{thm:euler}
as an application of Theorem~\ref{symthm}. To this end, we start by considering
\emph{any} (sufficiently smooth)  solution $(V^0(\tau,x),V^1(\tau,x))$ of 
\eqref{eq:Eulereqssymmhyp1}-\eqref{eq:Eulereqssymmhyp1G}
on $[\tau_0,0)\times \Tbb^1$ with the given choice of parameters $K$ and $\gamma$. It is useful to write these equations in terms of the new time coordinate 
  \begin{equation}
    \label{eq:newtimecoord}
    t=-(-\tau)^\Gamma,
  \end{equation}
while keeping \eqref{eq:Gammabound2} in mind. In the following,
we treat $V^0$, $V^1$, $\bar B^0$, $\bar B^1$ and $\bar G$ as scalars under the coordinate transformation  \eqref{eq:newtimecoord}, and not as coordinate components of tensors.
Additionally, we consider $V^0$, $V^1$, $\bar B^0$, $\bar B^1$ and $\bar G$ as functions of $t$ instead of $\tau$ using the same symbols (e.g.\ we write $V^0(t,x)$ instead of $V^0(\tau(t),x)$), and we find it convenient
to set
\begin{equation*}
T_0=-(-\tau_0)^\Gamma.    
\end{equation*} Finally, anticipating the asymptotic analysis below, we fix an arbitrary  smooth function $u_*(x)$ and, wherever $V^0$ and $V^1$ are defined, we define $U^0(t,x)$ and $U^1(t,x)$
via the relations
\begin{equation}
  \label{eq:fluidtrafo1}
  V^0(t,x)=t\,(1+U^0(t,x))e^{u_*(x)} \AND
  V^1(t,x)=t\, U^1(t,x) e^{u_*(x)},
\end{equation}
respectively.
Given a second solution  $(\hat V^0(t,x), \hat V^1(t,x))$ of
\eqref{eq:Eulereqssymmhyp1}-\eqref{eq:Eulereqssymmhyp1G}, we correspondingly
define $\hat{U}^0(t,x)$ and $\hat{U}^1(t,x)$
by
\begin{equation}
  \label{eq:fluidtrafo2}
  \hat V^0(t,x)=t\,(1+\hat U^0(t,x))e^{u_*(x)} \AND
  \hat V^1(t,x)=t\, \hat U^1(t,x) e^{u_*(x)},
\end{equation}
respectively, and we set
\begin{equation}
  \label{eq:deflittleu}  
  u^i(t,x):=U^i(t,x)-\hat U^i(t,x)=\frac{V^i(t,x)-\hat V^i(t,x)}{t e^{u_*}},\quad i=0,1.
\end{equation}
Notice carefully that we have chosen  the same function $u_*$ in both of \eqref{eq:fluidtrafo1} and \eqref{eq:fluidtrafo2}.
 Straightforward calculations then show that 
 $(u^0(t,x),u^1(t,x))$ 
 is a solution of
\begin{equation}
  \label{eq:Eulereqssymmhyp2}
  \begin{split}
    B^0(t,x,u^0,u^1) &\partial_t \begin{pmatrix}
      u^0\\
      u^1
    \end{pmatrix}
    +B^1(t,x,u^0,u^1) \partial_x \begin{pmatrix}
      u^0\\
      u^1
    \end{pmatrix}=
    G(t,x,u^0,u^1)
  \end{split}
\end{equation}
where
\begin{gather}
  \label{eq:Eulereqssymmhyp2B0}
   B^0(t,x,v^0,v^1)=t^{-3} e^{-3u_*(x)}\bar B^0(\Vt^0(t,x,v^0),\Vt^1(t,x,v^1)),\\
   \label{eq:Eulereqssymmhyp2B1}
  B^1(t,x,v^0,v^1)=t^{-3} e^{-3u_*(x)}\frac{\tau}{\Gamma t}\bar B^1(\Vt^0(t,x,v^0),\Vt^1(t,x,v^1)),\\
  \label{eq:Eulereqssymmhyp2G}
  \begin{split}
    G(t,x,v^0,v^1)=t^{-3} e^{-3u_*(x)}\Biggl(&\frac{\tau}{\Gamma t^2}\bar G(t,\Vt^0(t,x,v^0),\Vt^1(t,x,v^1))\\
     &-\frac 1t \bar B^0(\Vt^0(t,x,v^0),\Vt^1(t,x,v^1)) \begin{pmatrix}
       1+\hat U^0(t,x)+v^0\\
       \hat U^1(t,x)+v^1
     \end{pmatrix}\\
     &-\frac{\tau}{\Gamma t}u_{*,x}(x)\bar B^1(\Vt^0(t,x,v^0),\Vt^1(t,x,v^1)) \begin{pmatrix}
       1+\hat U^0(t,x)+v^0\\
       \hat U^1(t,x)+v^1
     \end{pmatrix}\\
  &-B^0(t,x,v^0,v^1) \partial_t \begin{pmatrix}
      \hat U^0(t,x)\\
      \hat U^1(t,x)
    \end{pmatrix}
    -B^1(t,x,v^0,v^1) \partial_x \begin{pmatrix}
      \hat U^0(t,x)\\
      \hat U^1(t,x)
    \end{pmatrix}\Biggr),
  \end{split}
\end{gather}
and (cf.\ \eqref{eq:fluidtrafo1} and \eqref{eq:deflittleu})
\begin{equation}
    \label{eq:pertfluidalt}
    \Vt^0(t,x,v^0)=t(1+\hat U^0(t,x) +v^0)e^{u_*(x)} \AND
    \Vt^1(t,x,v^1)=t(\hat U^1(t,x) +v^1)e^{u_*(x)}.
  \end{equation}
%for any $(t,x,v^0,v^1)\in [T_0,0]\times\mathbb T^1\times\Rbb^2$.
It is to be understood, in these equations, that $\tau$ is
a function of $t$ by \eqref{eq:newtimecoord} and
the time derivatives of $\hat U^0$ and $\hat U^1$ have
been eliminated by means of the Euler equations \eqref{eq:Eulereqssymmhyp1}-\eqref{eq:Eulereqssymmhyp1G}.
Notice carefully that the matrices $\bar B^0$ and $\bar B^1$ in \eqref{eq:Eulereqssymmhyp1B0} and \eqref{eq:Eulereqssymmhyp1B1} are cubic in $v^0$ and $v^1$,
$\tau/t = (-t)^{\frac{1}{\Gamma}-1}$ is bounded due
to the fact that \eqref{eq:Gammabound} holds by virtue
of our assumption $\gamma\in (1,2)$, and, 
%while $\bar G$ in \eqref{eq:Eulereqssymmhyp1G} is quartic.
due to \eqref{eq:pertfluidalt}, $\Vt^0$ and $\Vt^1$  
each come with a factor of $t$. From these observations, it
is then clear from the definitions \eqref{eq:Eulereqssymmhyp2B0} and \eqref{eq:Eulereqssymmhyp2B1} that the matrices $B^0$ and $B^1$ 
depend regularly on $t$ at $t=0$ despite the apparently singular
factors $t^{-3}$ and $t^{-4}\tau$.

Now, the assumed asymptotics \eqref{eq:fluidspecialasympt3} of the given background solution $(\hat V^0(\tau,x),\hat V^1(\tau,x))$  in Theorem~\ref{thm:euler} implies that there are functions $\hat u^0,\hat u^1\in C^0([T_0,0],C^\infty(\mathbb T^1))$ such that
\begin{equation}
    \label{eq:backgroundUasympt}
    \hat U^0(t,x)=t\, \hat u^0(t,x),\quad \hat U^1(t,x)=t\, \hat u^1(t,x),
  \end{equation}
where 
\begin{equation}
  \label{eq:eulerdatumdef2}
  \hat V_*=e^{u_*}.
\end{equation}
Heuristically, we expect that $\lim_{t\nearrow 0} u^1=0$ while the limit of $u^0$ is expected to be a small non-vanishing function in general determining the difference between the asymptotic datum $\hat V_*$ of the background solution and the datum $V_*$ of the perturbed solution. Comparing this to the asymptotics from Theorem~\ref{symthm},
we are motivated to choose
\begin{equation} \label{PbbEuler}
\Pbb=
  \begin{pmatrix}
    0 & 0\\
    0 & 1
  \end{pmatrix}.
\end{equation}
Given this, we set 
\begin{equation}
  \label{eq:defBBB}
  \Bc(v^0,v^1)=
  \begin{pmatrix}
    1 & 0\\
    0 & (1 + v^0) (1 + 2 v^0 + (v^0)^2 - (v^1)^2) (\gamma-1)
  \end{pmatrix},\quad
  %\label{eq:defBBBTilde}
  \tilde{\Bc}=
  \begin{pmatrix}
    1& 0\\
    0 & \gamma-1
  \end{pmatrix},
\end{equation}
\begin{equation}
  \label{eq:fluidF2}
  \Ft=F_1=0,\quad F_2(v^0,v^1)=-(v^1)^2 \begin{pmatrix}
      (\gamma -1) \left(-(v^0)^2-2 {v^0}+(v^1)^2-1\right)\\
      0
    \end{pmatrix},
\end{equation}
and 
\begin{equation} \label{F0Eulerdef}
    F_0(t,x,v^0,v^1)=G(t,x,v^0,v^1)-\frac 1t \Bc(v^0,v^1)\Pbb \begin{pmatrix} v^0\\v^1\end{pmatrix}-\frac 1t F_2(v^0,v^1)
\end{equation}
whose lengthy expression we refrain from writing out explicitly.
It can be checked with these choices that
$F_0\in C^0([T_0,0], C^\infty(\mathbb T^1\times\Rbb^2))$.

With the view to apply Theorem~\ref{symthm}, we choose $\mathbb T^1\times\mathbb R^2$ as our bundle and equip it with the flat Euclidean metric $h$. It is convenient to choose the Riemannian metric $g$ on $\Sigma=\mathbb T^1$ also to be the flat Euclidean metric. We choose Cartesian coordinates on $\Sigma$ and the standard orthonormal basis on the bundle so that all covariant derivatives introduced in Section~\ref{prelim} reduce to coordinate derivatives locally.

Then, assuming that $(v^0,v^1)$ satisfy
\begin{equation*}
\sqrt{(v^0)^2+(v^1)^2}< R,
\end{equation*}
it is then straightforward to check, with the help of
the definitions \eqref{eq:Eulereqssymmhyp2B0}-\eqref{eq:Eulereqssymmhyp2B1} and  \eqref{PbbEuler}-\eqref{F0Eulerdef}, that the coefficient assumptions from
Section~\ref{coeffassump} are satisfied by
the system \eqref{eq:Eulereqssymmhyp2} for constants of the form
\begin{gather*}
  %\label{eq:8}
  \kappa=1+\Ord(R)+\Ord(T_0),\quad \gamma_1=\frac 1{\gamma-1}+\Ord(R)+\Ord(T_0),\quad \gamma_2=1+\Ord(R)+\Ord(T_0), \\
  \lambda_1=\lambda_2=0,\quad
  \lambda_3=\Ord(R), \quad \alpha=0, \\
   \beta_0=\beta_2=\beta_4=\beta_6=0,\quad  \beta_1=\beta_7=\Ord(R^2),\quad \beta_3=\beta_5=\Ord(R)
   \AND
   \mathtt{b}=0.
\end{gather*}
Consequently, by choosing $R$ and $|T_0|$ sufficiently small,
we can ensure that all the hypotheses of
Theorem~\ref{symthm} are satisfied. Thus, there exists a $\delta>0$ such that for any initial data $u_0^0,u_0^1\in H^k(\Sigma)$
imposed at $t=T_0$ that
satisfies the smallness condition $\|(u_0^0,u_0^1)\|_{H^k(\Sigma)}<\delta$ there exists a unique solution $(u^0(t,x),u^1(t,x))$ to \eqref{eq:Eulereqssymmhyp2}
on $[T_0,0)\times \Tbb^1$ with the regularity
\begin{equation*}
u^0,u^1 \in 
C^0\bigl([T_0,0),H^k(\mathbb T^1)\bigr)\cap L^\infty\bigl([T_0,0),H^k(\mathbb T^1)\bigr)\cap C^1\bigl([T_0,0),H^{k-1}(\mathbb T^1)\bigr).
\end{equation*}
Given the assumed regularity of $\hat V^0$ and $\hat V^1$ (and the following discussion of the asymptotics), this implies \eqref{eq:eulerreg}.

Theorem~\ref{symthm} further guarantees that the limit $\lim_{t\nearrow 0} \Pbb^\perp (u^0(t,x),u^1(t,x))=\lim_{t\nearrow 0} u^0(t,x)$, denoted by $u^0(0,x)$, exists in $H^{k-1}(\Tbb^1)$. By \eqref{eq:backgroundUasympt}, it follows that  the limit
\begin{equation}
  \label{eq:eulerdatumdef1}
  V_*(x)=\lim_{t\nearrow 0}\frac{V^0(t,x)}t=e^{u_*(x)}(1+u^0(0,x))
\end{equation}
also exists in $H^{k-1}(\Tbb^1)$.
Moreover, fixing $\ep>0$ and noting that $\lambda_1=\alpha=\mathtt{b}=0$ and $\kappa$ can be taken to
be arbitrary close to $1$ by choosing $R$ and $|T_0|$ sufficiently small, we conclude from Theorem \ref{symthm}
that
  \begin{equation*}
    %\label{eq:10}
    \|u^1(t)\|_{H^{k-1}}\lesssim |t|^{1-\epsilon} \AND
    \|u^0(t)-u^0(0)\|_{H^{k-1}}\lesssim |t|
  \end{equation*}
  for any $\epsilon>0$.
This together with \eqref{eq:fluidspecialasympt3}, \eqref{eq:newtimecoord}, and \eqref{eq:deflittleu} implies \eqref{eq:eulerpertasymtp}. Finally, in order to establish \eqref{eq:eulerpertdata}, we consider the energy estimate of Theorem~\ref{symthm} which yields in the limit $t\nearrow 0$
\[\norm{u^0(0)}_{H^{k-1}}^2\lesssim \norm{u^0(T_0)}_{H^{k}}^2+\norm{u^1(T_0)}_{H^{k}}^2.\]
Combining this with  \eqref{eq:deflittleu}, \eqref{eq:eulerdatumdef2}, and \eqref{eq:eulerdatumdef1} produces
the desired result and completes the proof of Theorem \ref{thm:euler}.

\bigskip

\noindent \textit{Acknowledgements:}
This work was partially supported by the Australian Research Council grant DP170100630.

\appendix

\section{\label{calc}Calculus inequalities}
In this appendix, we collect, for the convenience of the reader, a number of calculus inequalities that we employ throughout this article. The proof of the following inequalities are well known and may be found, for example, in
the books \cite{AdamsFournier:2003}, \cite{Friedman:1976} and \cite{TaylorIII:1996}. As in the introduction,
$\Sigma$ will denote a closed  $n$-dimensional  manifold.

\begin{thm}{\emph{[H\"{o}lder's inequality]}} \label{Holder}
If $0< p,q,r \leq \infty$ satisfy $1/p+1/q = 1/r$, then
\begin{equation*}
\norm{uv}_{L^r} \leq \norm{u}_{L^p}\norm{v}_{L^q}
\end{equation*}
for all $u\in L^p(\Sigma)$ and $v\in L^q(\Sigma)$.
\end{thm}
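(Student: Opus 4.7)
The plan is to reduce to the classical case $r=1$ (where $1/p+1/q=1$) via a scaling argument, then establish that base case from the pointwise Young inequality. First I would dispose of the degenerate situations: if $\norm{u}_{L^p}=0$ or $\norm{v}_{L^q}=0$ then $uv=0$ almost everywhere and the inequality is trivial, while if either factor on the right is $+\infty$ there is nothing to prove. I would also dispatch the $L^\infty$ endpoints directly: if $q=\infty$ then necessarily $p=r$, and the pointwise bound $|u(x)v(x)|\leq |u(x)|\,\norm{v}_{L^\infty}$ integrated over $\Sigma$ with respect to $\nu_g$ yields $\norm{uv}_{L^r}\leq \norm{u}_{L^r}\norm{v}_{L^\infty}$; the case $p=\infty$ is symmetric, and the case $p=q=r=\infty$ is the elementary supremum bound.

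For the core case with $p,q<\infty$, I would first prove $r=1$. By homogeneity I may replace $u$ and $v$ by $u/\norm{u}_{L^p}$ and $v/\norm{v}_{L^q}$ and assume $\norm{u}_{L^p}=\norm{v}_{L^q}=1$, so that it suffices to show $\int_\Sigma |uv|\,\nu_g\leq 1$. Applying the pointwise Young inequality $ab\leq a^p/p + b^q/q$ for $a,b\geq 0$ with conjugate exponents $p,q$ at $a=|u(x)|$, $b=|v(x)|$ and integrating gives
\begin{equation*}
\int_\Sigma |uv|\,\nu_g \leq \frac{1}{p}\int_\Sigma |u|^p\,\nu_g + \frac{1}{q}\int_\Sigma |v|^q\,\nu_g = \frac{1}{p}+\frac{1}{q}=1,
\end{equation*}
which settles $r=1$. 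Young's inequality itself I would derive from the concavity of $\log$: $\log\bigl(a^p/p + b^q/q\bigr)\geq (1/p)\log(a^p)+(1/q)\log(b^q)=\log(ab)$.

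For general $r\in(0,\infty)$ with $1/p+1/q=1/r$ and $p,q<\infty$, I set $P=p/r$ and $Q=q/r$, so that $1/P+1/Q=1$ and $P,Q\in(1,\infty)$. Applying the already-established $r=1$ case to $|u|^r\in L^P(\Sigma)$ and $|v|^r\in L^Q(\Sigma)$ produces
\begin{equation*}
\int_\Sigma |u|^r|v|^r\,\nu_g = \bigl\||u|^r|v|^r\bigr\|_{L^1}\leq \bigl\||u|^r\bigr\|_{L^P}\bigl\||v|^r\bigr\|_{L^Q} = \norm{u}_{L^p}^{\,r}\,\norm{v}_{L^q}^{\,r},
\end{equation*}
and raising to the $1/r$ power yields the claim. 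The only real care needed is bookkeeping across the boundary cases $p=\infty$, $q=\infty$, or $r=\infty$ (with the corresponding relations forcing the remaining exponents), but since each of these reduces to the pointwise estimates recorded above, there is no genuine obstacle; the substantive content is entirely contained in the pointwise Young inequality applied in the normalized $r=1$ setting.
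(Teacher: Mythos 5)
Your argument is correct and is the standard proof: handle the $L^\infty$ endpoints pointwise, establish the $r=1$ case by normalizing and integrating the pointwise Young inequality $ab\le a^p/p+b^q/q$ (itself obtained from concavity of $\log$), and then obtain the general case by applying the $r=1$ result to $|u|^r,|v|^r$ with the conjugate exponents $P=p/r$, $Q=q/r$, noting that $1/p+1/q=1/r$ with $p,q>0$ forces $p,q>r$ so that $P,Q\in(1,\infty)$. The paper itself supplies no proof of this statement---Appendix~\ref{calc} records it as a well-known calculus inequality and defers to \cite{AdamsFournier:2003}, \cite{Friedman:1976}, and \cite{TaylorIII:1996}---so there is nothing to compare against; your reduction-to-$r=1$ route is precisely the argument found in those references and contains no gaps.
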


\begin{thm}{\emph{[Sobolev's inequality]}} \label{Sobolev} Suppose
$1\leq p < \infty$ and $s\in \Zbb_{> n/p}$. Then
\begin{equation*}
\norm{u}_{L^\infty} \lesssim \norm{u}_{W^{s,p}}
\end{equation*}
for all $u\in W^{s,p}(\Sigma)$.
\end{thm}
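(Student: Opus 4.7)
The plan is to reduce the statement on the closed manifold $\Sigma$ to the classical Sobolev embedding $W^{s,p}(\mathbb{R}^n) \hookrightarrow L^\infty(\mathbb{R}^n)$ valid when $sp > n$. First, since $\Sigma$ is compact, I would choose a finite atlas $\{(U_\alpha,\varphi_\alpha)\}_{\alpha=1}^{N}$ of coordinate charts with $\varphi_\alpha(U_\alpha)\subset \mathbb{R}^n$ open and bounded, together with a subordinate smooth partition of unity $\{\chi_\alpha\}$. For $u\in \Gamma(V)$, decompose $u = \sum_\alpha \chi_\alpha u$; it suffices to bound each $\|\chi_\alpha u\|_{L^\infty}$ by $\|u\|_{W^{s,p}}$, because then $\|u\|_{L^\infty}\le \sum_\alpha \|\chi_\alpha u\|_{L^\infty}$.

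In a single chart, using a local orthonormal frame for $V$, the section $\chi_\alpha u$ is represented by a compactly supported $\mathbb{R}^N$-valued function $f_\alpha$ on $\varphi_\alpha(U_\alpha)\subset \mathbb{R}^n$, which can be extended by zero to all of $\mathbb{R}^n$. Because the Riemannian volume form, the bundle metric $h$, and the Christoffel/connection coefficients $\Gamma_{ij}^k, \omega_{iJ}^I$ are all smooth and uniformly bounded on the compact set $\overline{U_\alpha}$, a straightforward (if tedious) induction comparing iterated covariant derivatives $\nabla^\ell(\chi_\alpha u)$ with iterated coordinate derivatives $\partial^\beta f_\alpha$, $|\beta|\le \ell$, yields the equivalence
\[
\|f_\alpha\|_{W^{s,p}(\mathbb{R}^n)} \lesssim \|\chi_\alpha u\|_{W^{s,p}(\Sigma)} \lesssim \|u\|_{W^{s,p}(\Sigma)},
\]
with constants depending only on the fixed atlas, partition of unity, metrics, and connection.

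It then remains to invoke the flat Sobolev embedding: if $s\in\mathbb{Z}_{>n/p}$ and $f\in W^{s,p}(\mathbb{R}^n)$, then $f$ has a continuous representative with $\|f\|_{L^\infty(\mathbb{R}^n)}\lesssim \|f\|_{W^{s,p}(\mathbb{R}^n)}$. For $p>1$ with $sp>n$ this is Morrey's inequality (applied, if needed, to $\partial^\beta f$ for $|\beta|$ such that one lands in the Morrey range $W^{1,q}\hookrightarrow C^0$ with $q>n$); for $p=1$ and $s>n$ one iterates the Gagliardo--Nirenberg--Sobolev inequality $\|g\|_{L^{n/(n-1)}}\lesssim \|\nabla g\|_{L^1}$ repeatedly, passing through $W^{1,n/(n-1)} \hookrightarrow W^{0,n/(n-2)} \hookrightarrow \cdots$, until the exponent exceeds $n$, and then applies Morrey's inequality one last time. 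Combining this with the chart estimates above gives $\|\chi_\alpha u\|_{L^\infty(\Sigma)} = \|f_\alpha\|_{L^\infty(\mathbb{R}^n)} \lesssim \|u\|_{W^{s,p}(\Sigma)}$, and summing over $\alpha$ completes the proof.

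The main obstacle, such as it is, is bookkeeping rather than analysis: one must carefully check the equivalence of the intrinsic norm $\|\cdot\|_{W^{s,p}(\Sigma)}$ (defined via covariant derivatives $\nabla^\ell$ and the bundle metric $h$) with the flat norm of the local coordinate representatives, which requires absorbing lower-order terms generated by repeated application of $\nabla_i = \partial_i + \omega_i$ and the Christoffel symbols; these contributions are all controlled in $L^\infty$ by compactness of $\Sigma$, so they yield only a harmless multiplicative constant.
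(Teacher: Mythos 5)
The paper does not actually prove this statement: it is placed in Appendix A with the remark that the calculus inequalities ``are well known and may be found, for example, in the books \cite{AdamsFournier:2003}, \cite{Friedman:1976} and \cite{TaylorIII:1996}.'' Your sketch is the standard textbook argument (partition of unity subordinate to a finite atlas, reduction in each chart to the flat $W^{s,p}(\Rbb^n)\hookrightarrow L^\infty(\Rbb^n)$ embedding via the boundedness of the coordinate representations of $g$, $h$ and the connection coefficients on compact closures, and finally Morrey's inequality or, for $p=1$, iterated Gagliardo--Nirenberg--Sobolev), and it is essentially the proof one finds in those references. It is correct; the only place that deserves one extra sentence of care is the critical endpoint in the $p=1$ iteration, where after $n-1$ GNS steps one lands in $W^{s-n+1,\,n}$ and $W^{1,n}$ does \emph{not} embed in $L^\infty$, so one must use that $s>n$ forces $s-n+1\geq 2$ and then pass $W^{2,n}\hookrightarrow W^{1,q}$ for some $q>n$ before invoking Morrey --- exactly as you indicate, but worth making explicit if this were written out in full.
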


\begin{thm}{\emph{[Product and commutator estimates]}} \label{calcpropB} $\;$

\begin{enumerate}[(i)]
\item
Suppose $1\leq p_1,p_2,q_1,q_2\leq \infty$, $s\in \Zbb_{\geq 1}$, and
\begin{equation*}
\frac{1}{p_1}+\frac{1}{p_2} = \frac{1}{q_1} + \frac{1}{q_2} = \frac{1}{r}.
\end{equation*}
Then
\begin{align*}
\norm{\nabla^s(uv)}_{L^r} \lesssim \norm{u}_{W^{s,p_1}}\norm{v}_{L^{q_1}} + \norm{u}_{L^{p_2}}\norm{v}_{W^{s,q_2}} \label{clacpropB.2.1}
\intertext{and}
\norm{[\nabla^s,u]v}_{L^r} \lesssim \norm{\nabla u}_{L^{p_1}}\norm{v}_{W^{s-1,q_1}} + \norm{\nabla u}_{
W^{s-1,p_2}}\norm{v}_{L^{q_2}}
\end{align*}
for all $u,v \in C^\infty(\Sigma)$.
\item[(ii)]  Suppose $s_1,s_2,s_3\in \Zbb_{\geq 0}$, $\;s_1,s_2\geq s_3$,  $1\leq p \leq \infty$, and $s_1+s_2-s_3 > n/p$. Then
\begin{equation*}
\norm{uv}_{W^{s_3,p}} \lesssim \norm{u}_{W^{s_1,p}}\norm{v}_{W^{s_2,p}}
\end{equation*}
for all $u\in W^{s_1,p}(\Sigma)$ and $v\in W^{s_2,p}(\Sigma)$.
\end{enumerate}
\end{thm}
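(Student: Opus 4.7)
\medskip

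\noindent\textbf{Proposal for the proof of Theorem \ref{calcpropB}.}

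The plan is to reduce everything to the familiar Euclidean Moser inequalities and then transfer back to the closed manifold $\Sigma$. Since $\Sigma$ is compact, I would fix a finite cover by coordinate charts $\{U_\alpha\}$ together with a subordinate smooth partition of unity $\{\chi_\alpha\}$, and choose each $U_\alpha$ small enough that the metric components $g_{ij}$, the Christoffel symbols $\Gamma^k_{ij}$, the bundle connection coefficients $\omega_{iJ}^I$, and the volume factor $\sqrt{\det g}$, together with all of their derivatives up to order $s$, are bounded on $U_\alpha$. Because $\nabla$ on $T^r_s(\Sigma)$ (and on the product with $V$) differs from the coordinate partial derivatives by smooth bounded zero-th order terms, repeated application of the covariant Leibniz rule produces the schematic expansion
\begin{equation*}
\nabla^s(uv) = \sum_{j=0}^{s} \binom{s}{j}\, \nabla^j u \star \nabla^{s-j} v + \sum_{\ell=0}^{s-1} C_\ell \star \nabla^\ell u \star \nabla^{s-1-\ell} v,
\end{equation*}
where the $C_\ell$ are smooth bounded tensors built from derivatives of the connection coefficients and $\star$ denotes a fixed contraction. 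An identical expansion holds for the commutator $[\nabla^s, u]v$, in which the leading term $u\nabla^s v$ cancels, so every summand contains at least one derivative of $u$.

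For part (i), I would estimate each summand $\nabla^j u \star \nabla^{s-j}v$ in $L^r$ using Hölder's inequality (Theorem \ref{Holder}) applied with the exponents determined by the Gagliardo--Nirenberg interpolation
\begin{equation*}
\|\nabla^j w\|_{L^{\bar p}} \lesssim \|w\|_{W^{s,p}}^{j/s}\,\|w\|_{L^q}^{1-j/s},\qquad \frac{1}{\bar p} = \frac{j}{s p} + \frac{s-j}{s q},
\end{equation*}
which is valid on each chart $U_\alpha$ and therefore, by patching with the partition of unity and absorbing the $W^{s,p}$ norms of $\chi_\alpha$ into the implicit constant, on all of $\Sigma$. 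Choosing in each term $j=0,\dots,s$ the pair $(\bar p_1,\bar p_2)$ that makes the two interpolation exponents compatible with $1/p_1+1/p_2=1/r$ (respectively $1/q_1+1/q_2=1/r$) and then applying Young's inequality $ab\leq \tfrac{1}{\theta}a^{\theta} + \tfrac{1}{\theta'}b^{\theta'}$ to recombine the factors produces the stated bilinear bound; the lower-order connection terms from the second sum in the expansion above are handled identically and absorbed into the constant. The commutator estimate is obtained by exactly the same procedure after observing that the cancellation of $u\nabla^s v$ means the range of $j$ is $1\leq j \leq s$, which is why one derivative on $u$ appears on the right.

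For part (ii), the Sobolev algebra estimate, I would argue directly from (i): for each $0 \leq \ell \leq s_3$ and each Leibniz split $\nabla^\ell(uv) = \sum_{j=0}^\ell \binom{\ell}{j}\nabla^j u \star \nabla^{\ell-j} v$, I would apply Hölder and the Gagliardo--Nirenberg interpolation with endpoints chosen according to whether $j\leq s_1-(n/p\text{ adjustment})$ or $\ell-j\leq s_2-(n/p\text{ adjustment})$; the condition $s_1+s_2-s_3 > n/p$ is exactly what allows, in every split, at least one of the two factors to be controlled in $L^\infty$ via Theorem \ref{Sobolev} while the other is controlled in $L^p$. The hypothesis $s_1,s_2\geq s_3$ guarantees that the $W^{s_3,p}$-norm on the left never needs more derivatives than are available on either factor.

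\textbf{Main obstacle.} The only genuinely delicate point is the bookkeeping in the interpolation step of part (i): one must verify that for every $j\in\{0,\dots,s\}$ there exists a valid choice of intermediate exponents $\bar p_1,\bar p_2$ satisfying $1/\bar p_1+1/\bar p_2=1/r$ for which the resulting product $\|u\|_{W^{s,p_1}}^{j/s}\|u\|_{L^{p_2}}^{1-j/s}\cdot \|v\|_{L^{q_1}}^{j/s}\|v\|_{W^{s,q_2}}^{1-j/s}$ can be bounded by $\|u\|_{W^{s,p_1}}\|v\|_{L^{q_1}} + \|u\|_{L^{p_2}}\|v\|_{W^{s,q_2}}$ via Young. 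This is a well-known calculation, but the edge cases $p_1=\infty$ or $q_2=\infty$ require treating Gagliardo--Nirenberg as a Sobolev embedding instead, and must be checked separately. Everything else is a routine reduction to Euclidean space modulo the smooth bounded connection terms.
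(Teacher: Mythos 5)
The paper does not actually prove Theorem~\ref{calcpropB}: the appendix introduces it as a well-known fact and simply cites the books \cite{AdamsFournier:2003}, \cite{Friedman:1976} and \cite{TaylorIII:1996}. So there is no internal proof to compare against; what you have written is a proposal for the (omitted) proof.

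Your outline is the standard one found in those references: reduce to Euclidean space via a finite atlas and a subordinate partition of unity; replace $\nabla$ by $\partial$ plus smooth bounded zeroth-order terms; expand $\nabla^s(uv)$ and $[\nabla^s,u]v$ by the Leibniz rule; estimate each $\nabla^j u\,\nabla^{s-j}v$ via Gagliardo--Nirenberg interpolation, H\"older, and Young; note the cancellation of the $j=0$ term in the commutator; and for (ii) combine (i) with the Sobolev embedding (Theorem~\ref{Sobolev}), with the condition $s_1+s_2-s_3>n/p$ guaranteeing that in each Leibniz split at least one factor lands in $L^\infty$. That is all correct in spirit, and your honest flagging of the interpolation bookkeeping as the delicate step is appropriate.

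There is, however, one place where the bookkeeping is not merely tedious but genuinely in tension with what you wrote. After interpolating $u$ between $W^{s,p_1}$ and $L^{p_2}$ and $v$ between $L^{q_1}$ and $W^{s,q_2}$ with the GN parameter $\theta=j/s$, the resulting H\"older exponents satisfy
\begin{equation*}
\frac{1}{a_j}+\frac{1}{b_j}
= \theta\Bigl(\frac{1}{p_1}+\frac{1}{q_1}\Bigr)+(1-\theta)\Bigl(\frac{1}{p_2}+\frac{1}{q_2}\Bigr),
\end{equation*}
which equals $1/r$ for every $j$ precisely when $1/p_1+1/q_1=1/p_2+1/q_2=1/r$ (the usual Kato--Ponce/Moser grouping), and not, in general, under the grouping $1/p_1+1/p_2=1/q_1+1/q_2=1/r$ that appears in the statement and that you adopted; with the latter grouping the combination above merely averages to $1/r$ across $j=0$ and $j=s$, and for intermediate $j$ it can exceed $1/r$, at which point H\"older does not produce an $L^r$ bound and compactness of $\Sigma$ does not rescue you (one cannot embed a coarser $L^{r_j}$ into $L^r$ when $r_j<r$). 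You should therefore carry out the Young-inequality step explicitly with the pairing $1/p_i+1/q_i=1/r$ in mind, verify which grouping is actually needed, and note that the two groupings coincide in the symmetric cases used in the body of the paper (e.g.\ $p_1=q_2$, $p_2=q_1$). The rest of your sketch, including the treatment of the lower-order connection terms and of part (ii), is sound.
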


\begin{thm}{\emph{[Moser's estimates]}}  \label{calcpropC}
Suppose  $1\leq p \leq \infty$, $s\in \Zbb_{\geq 1}$, $0\leq k\leq s$,  and $f\in C^s(U)$, where
$U$ is open and bounded in $\Rbb$ and contains $0$, and $f(0)=0$. Then
\begin{equation*}
\norm{\nabla^k f(u)}_{L^{p}} \leq C\bigl(\norm{f}_{C^s(\overline{U})}\bigr)(1+\norm{u}^{s-1}_{L^\infty})\norm{u}_{W^{s,p}}
\end{equation*}
for all $u \in C^0(\Sigma)\cap L^\infty(\Sigma)\cap W^{s,p}(\Sigma)$ with
$u(x) \in U$ for all $x\in \Sigma$.
\end{thm}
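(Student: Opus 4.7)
The proof will proceed by induction on $k$, or more directly by invoking the Faà di Bruno formula together with Gagliardo--Nirenberg interpolation, which is the standard strategy for Moser-type estimates.

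First I would dispose of the case $k=0$. Since $f(0)=0$ and $f\in C^s(U)\subset C^1(U)$, the mean value theorem gives $|f(u(x))|\leq \|f'\|_{L^\infty(U)}|u(x)|$ pointwise on $\Sigma$, so $\|f(u)\|_{L^p}\leq \|f\|_{C^1(\overline{U})}\|u\|_{L^p}$, which is of the claimed form since $(1+\|u\|_{L^\infty}^{s-1})\|u\|_{W^{s,p}}\geq \|u\|_{L^p}$. The hypothesis $f(0)=0$ enters \emph{only} here; for $k\geq 1$ the derivatives of $u$ provide the needed smallness.

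For $1\leq k\leq s$, the plan is to expand $\nabla^k f(u)$ by the Faà di Bruno chain rule (carried out covariantly using the flat-in-the-fiber connection on $\Rbb$ together with the Levi-Civita connection on $\Sigma$): schematically,
\begin{equation*}
\nabla^k f(u) = \sum_{\ell=1}^{k}\sum_{\substack{k_1+\cdots+k_\ell=k\\ k_j\geq 1}} c_{\ell,k_1,\ldots,k_\ell}\,f^{(\ell)}(u)\,\nabla^{k_1}u\star\cdots\star\nabla^{k_\ell}u,
\end{equation*}
where the $\star$ products denote tensor products with appropriate symmetrizations whose operator norms are absolutely bounded by combinatorial constants. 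Since $\ell\leq k\leq s$ and $u(x)\in U$ with $\overline{U}$ compact, $\|f^{(\ell)}(u)\|_{L^\infty(\Sigma)}\leq \|f\|_{C^s(\overline{U})}$. The remaining task is to bound, in $L^p$, each product $\nabla^{k_1}u\star\cdots\star\nabla^{k_\ell}u$ with $\sum_j k_j=k$ and $k_j\geq 1$.

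For this I would apply Hölder's inequality (Theorem~\ref{Holder}) with exponents $p_j=kp/k_j$, which satisfy $\sum_j 1/p_j = 1/p$, giving
\begin{equation*}
\bigl\|\nabla^{k_1}u\star\cdots\star\nabla^{k_\ell}u\bigr\|_{L^p} \lesssim \prod_{j=1}^{\ell}\|\nabla^{k_j}u\|_{L^{p_j}}.
\end{equation*}
Each factor is then estimated by the Gagliardo--Nirenberg interpolation inequality on the closed manifold $\Sigma$,
\begin{equation*}
\|\nabla^{k_j}u\|_{L^{kp/k_j}} \lesssim \|u\|_{L^\infty}^{1-k_j/k}\|\nabla^k u\|_{L^p}^{k_j/k} + \|u\|_{L^\infty},
\end{equation*}
which on a closed manifold requires combining a local Euclidean Gagliardo--Nirenberg bound via a finite partition of unity with an absorption of the extra lower-order term. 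Multiplying over $j=1,\dots,\ell$ and using $\sum_j k_j/k=1$ collapses the product to $\|u\|_{L^\infty}^{\ell-1}\|\nabla^k u\|_{L^p}$ plus lower-order pieces controlled by $\|u\|_{L^\infty}^{\ell}$; bounding $\|\nabla^k u\|_{L^p}\leq \|u\|_{W^{s,p}}$ and summing over the finitely many terms $\ell\in\{1,\ldots,k\}\subseteq\{1,\ldots,s\}$ yields
\begin{equation*}
\|\nabla^k f(u)\|_{L^p} \leq C\bigl(\|f\|_{C^s(\overline{U})}\bigr)\sum_{\ell=1}^{k}\|u\|_{L^\infty}^{\ell-1}\|u\|_{W^{s,p}} \leq C\bigl(\|f\|_{C^s(\overline{U})}\bigr)\bigl(1+\|u\|_{L^\infty}^{s-1}\bigr)\|u\|_{W^{s,p}},
\end{equation*}
which is the stated bound. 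The main technical obstacle is the clean verification of Gagliardo--Nirenberg on the closed manifold $\Sigma$ in the required endpoint form (including the borderline cases $p=\infty$ and $p=1$), since the statement is phrased uniformly in $p\in[1,\infty]$; everything else is bookkeeping of the Faà di Bruno combinatorics and repeated application of Hölder's inequality.
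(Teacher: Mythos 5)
The paper does not supply its own proof of this statement: it appears in the appendix as a standard calculus inequality, cited directly to the textbooks of Adams--Fournier, Friedman, and Taylor. Your argument -- Fa\`a di Bruno for $\nabla^k f(u)$, H\"older with exponents $p_j = kp/k_j$ to split the product, and Gagliardo--Nirenberg interpolation $\|\nabla^{k_j}u\|_{L^{kp/k_j}}\lesssim \|u\|_{W^{k,p}}^{k_j/k}\|u\|_{L^\infty}^{1-k_j/k}$ to produce the single factor $\|u\|_{L^\infty}^{\ell-1}\|u\|_{W^{s,p}}$ per composition term -- is precisely the standard textbook proof that those references give, and it is correct. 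Two small remarks: in the $k=0$ case your appeal to the mean value theorem implicitly wants $U$ connected (otherwise you should phrase it as a Lipschitz bound for $f$ on $\overline{U}$, which is what the paper's hypotheses in practice provide); and for the endpoint $p=\infty$ the H\"older step is trivial and the interpolation reduces to the Landau--Kolmogorov inequality, so that case is in fact unproblematic on a closed manifold.
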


\begin{lem} {\emph{[Ehrling's lemma]}} \label{Ehrling}
Suppose $1\leq p < \infty$, $s_0,s,s_1\in \Zbb_{\geq 0}$, and $s_0 < s < s_1$. Then for any $\epsilon>0$ there exists a constant $C=C(\epsilon^{-1})$ such
that
\begin{equation*}
\norm{u}_{W^{s,p}} \leq \epsilon \norm{u}_{W^{s_1,p}} + C(\epsilon^{-1})\norm{u}_{W^{s_0,p}}
\end{equation*}
for all $u\in W^{s_1,p}(\Sigma)$.
\end{lem}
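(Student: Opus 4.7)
The plan is to prove Ehrling's lemma by the classical contradiction-plus-compactness argument, exploiting the fact that $\Sigma$ is a closed compact manifold so that the Rellich--Kondrachov theorem gives the compact embedding $W^{s_1,p}(\Sigma) \hookrightarrow\hookrightarrow W^{s,p}(\Sigma)$ whenever $s<s_1$. (An alternative route via the Gagliardo--Nirenberg interpolation inequality $\|u\|_{W^{s,p}} \lesssim \|u\|_{W^{s_0,p}}^{1-\theta}\|u\|_{W^{s_1,p}}^{\theta}$ with $\theta=(s-s_0)/(s_1-s_0)$ followed by Young's inequality would also work and would give an explicit $C(\epsilon^{-1})$; I would mention this but prefer the softer argument since the statement only claims existence of the constant.)

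First, I would assume for contradiction that the conclusion fails: there exist $\epsilon_0>0$ and a sequence $\{u_n\}\subset W^{s_1,p}(\Sigma)$ with
\begin{equation*}
\norm{u_n}_{W^{s,p}} > \epsilon_0\norm{u_n}_{W^{s_1,p}} + n\norm{u_n}_{W^{s_0,p}}.
\end{equation*}
By homogeneity I can rescale so that $\norm{u_n}_{W^{s,p}}=1$ for every $n$, which forces the two bounds
\begin{equation*}
\norm{u_n}_{W^{s_1,p}} < \epsilon_0^{-1} \AND \norm{u_n}_{W^{s_0,p}} < n^{-1}.
\end{equation*}

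Next, since $\Sigma$ is closed and $s<s_1$, the sequence $\{u_n\}$ is bounded in $W^{s_1,p}(\Sigma)$ and hence, by compactness of the embedding $W^{s_1,p}(\Sigma)\hookrightarrow W^{s,p}(\Sigma)$, admits a subsequence (not relabelled) converging strongly in $W^{s,p}$ to some $u\in W^{s,p}(\Sigma)$. Strong convergence preserves norms, so $\norm{u}_{W^{s,p}}=1$.

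Finally, the continuous embedding $W^{s,p}(\Sigma)\hookrightarrow W^{s_0,p}(\Sigma)$ (which holds because $s_0\leq s$) gives $u_n\to u$ also in $W^{s_0,p}$; combined with $\norm{u_n}_{W^{s_0,p}}<n^{-1}\to 0$ this forces $u=0$, contradicting $\norm{u}_{W^{s,p}}=1$. I do not anticipate any real obstacle here: the only nontrivial ingredient is the compact Sobolev embedding on the closed manifold $\Sigma$, which is standard, and the scaling/contradiction mechanism is routine. If an explicit dependence of $C$ on $\epsilon$ were needed elsewhere in the paper, I would instead carry out the interpolation--Young's inequality variant to obtain $C(\epsilon^{-1})\sim \epsilon^{-(s-s_0)/(s_1-s)}$, but this is not required for the applications of the lemma in the main text.
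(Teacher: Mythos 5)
Your proof is correct. The paper does not give its own proof of Ehrling's lemma---it is stated in Appendix~\ref{calc} with a pointer to standard references (Adams--Fournier, Friedman, Taylor)---and your compactness/contradiction argument, resting on the Rellich--Kondrachov compact embedding $W^{s_1,p}(\Sigma)\hookrightarrow\hookrightarrow W^{s,p}(\Sigma)$ on a closed manifold together with the continuous embedding $W^{s,p}(\Sigma)\hookrightarrow W^{s_0,p}(\Sigma)$, is precisely the canonical proof those references give. Your closing remark is also apt: the softer argument yields no explicit $\epsilon$-dependence of $C$, but the paper's statement and its uses (e.g.\ in the derivations of \eqref{symthm9} and \eqref{Hkdec7}) only need existence of some $C(\epsilon^{-1})$, so nothing is lost; the interpolation-plus-Young's-inequality route would give $C\sim\epsilon^{-(s-s_0)/(s_1-s)}$ if that were ever needed.
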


\section{Conformal Transformations\label{ctrans}}
In this section, we recall a number of formulas that govern the transformation laws for geometric objects under a conformal transformation that will be needed for our applications to wave equations. Under a  
conformal transformation of the form
\begin{equation} \label{gtrans}
\gt_{\mu\nu} = \Omega^2 g_{\mu\nu},
\end{equation}
the Levi-Civita connection $\nablat_\mu$ and $\nabla_\mu$ of $\gt_{\mu\nu}$ and $g_{\mu\nu}$, respectively, are related by
\begin{equation*} %\label{christtrans}
\nablat_{\mu}\omega_\nu = \nabla_\mu\omega_\nu - \Cc_{\mu\nu}^\lambda \omega_\lambda,
\end{equation*}
where 
\begin{equation*}
\Cc_{\mu\nu}^\lambda = 2\delta^\lambda_{(\mu}\nabla_{\nu)}\ln(\Omega)
-g_{\mu\nu}g^{\lambda\sigma}\nabla_\sigma \ln(\Omega).
\end{equation*}
Using this, it can be shown that the wave operator transforms as
\begin{equation}\label{wavetransA}
\gt^{\mu\nu}\nablat_\mu\nablat_\nu \ut - \frac{n-2}{4(n-1)}\Rt \ut = \Omega^{-1-\frac{n}{2}}
\biggl(g^{\mu\nu}\nabla_\mu\nabla_\nu u - \frac{n-2}{4(n-1)}R u\biggr)
\end{equation}
where $\Rt$ and $R$ are the Ricci curvature scalars of $\gt$ and $g$, respectively, $n$ is the dimension of
spacetime, and
\begin{equation} \label{utransA}
\ut = \Omega^{1-\frac{n}{2}}u.
\end{equation} 
Assuming now that the scalar functions\footnote{Here, we use same indexing conventions as set out in Section \ref{Vbundle}, which, in particular, means that upper case Latin indices, e.g. I,J,K, will run from $1$ to $N$.} $\ut^K$ satisfy the system of wave equations
\begin{equation} \label{wavetransB}
\gt^{\mu\nu}\nablat_\mu\nablat_\nu \ut^K - \frac{n-2}{4(n-1)}\Rt \ut^K =\ft^K,
\end{equation}
it then follows immediately from \eqref{wavetransA} and
\eqref{utransA} that the scalar functions
\begin{equation} \label{utrans}
u^K = \Omega^{\frac{n}{2}-1}\ut^K
\end{equation}
must satisfy the conformal system of wave equations given by
\begin{equation} \label{wavetransC}
g^{\mu\nu}\nabla_\mu\nabla_\nu u^K - \frac{n-2}{4(n-1)} R u^K =f^K
\end{equation}
where
\begin{equation} \label{ftransA}
f^K = \Omega^{1+\frac{n}{2}}\ft^K.
\end{equation}
Specializing to source terms $\ft^K$ that are quadratic in the derivatives, that is, of the form
\begin{equation} \label{ftransB}
\ft^K = q_{IJ}^K(\ut^L)\gt^{\mu\nu}\nablat_\mu \ut^I \nablat_\nu \ut^J,
\end{equation}
a short calculation using \eqref{gtrans} and \eqref{utrans} shows that the corresponding conformal source $f^K$,
defined by \eqref{ftransA}, is given by
\begin{align} 
f^K = q_{IJ}^K\bigl( \Omega^{1-\frac{n}{2}}u^L\bigr)\biggl(&\Omega^{1-\frac{n}{2}}g^{\mu\nu}\nabla_\mu u^I\nabla_\nu u^J +2\bigg(\frac{n}{2}-1\biggr)\Omega^{2-\frac{n}{2}}g^{\mu\nu}\nabla_\mu\Omega^{-1} \nabla_\nu u^{(I} u^{J)} \notag\\
&\qquad  + \bigg(1-\frac{n}{2}\biggr)^2  \Omega^{3-\frac{n}{2}}g^{\mu\nu}\nabla_\mu\Omega^{-1} \nabla_\nu\Omega^{-1} u^I u^J\biggr).\label{ftransC}
\end{align}

\bibliographystyle{amsplain}
%\bibliography{refs}
%\bibliography{singhyp_v7}

%------------------------------- begin the bibliography -------------------------------------------------------------------

\providecommand{\bysame}{\leavevmode\hbox to3em{\hrulefill}\thinspace}
\providecommand{\MR}{\relax\ifhmode\unskip\space\fi MR }
% \MRhref is called by the amsart/book/proc definition of \MR.
\providecommand{\MRhref}[2]{%
  \href{http://www.ams.org/mathscinet-getitem?mr=#1}{#2}
}
\providecommand{\href}[2]{#2}

%------------------------------- end the bibliography ---------------------------------------------------------------------

\end{document}